\renewcommand{\title}[1]{\vspace{\fill}
\eject\addtolength{\baselineskip}{4pt}
{\bfseries\LARGE #1}\\[3mm]\addtolength{\baselineskip}{-4pt}}
\renewcommand{\author}[3]{\parbox[t]{75mm}
{\begin{center}{\scshape #1}\\[3mm] #2\\
 {\ttfamily #3} \end{center}}}
\newcommand{\ml}{${\rm ml}$}
\newtheorem{thm}{\bfseries Theorem}
\newtheorem{lem}[thm]{\bfseries Lemma}        
\newtheorem{remark}[thm]{\bfseries Remark}    
\newtheorem{prop}[thm]{\bfseries Proposition} 
\newtheorem{cor}[thm]{\bfseries Corollary}     
\newtheorem{cl}[thm]{\bfseries Claim}
\begin{document}

\begin{center}

\title{Network fault costs based on\\[1mm] minimum leaf spanning trees
 } 
\author{Jan Goedgebeur\footnotemark[1]  
}{
Department of Computer Science\\
KU Leuven Kulak\\
Etienne Sabbelaan 53, 8500 Kortrijk, Belgium \\ and \\ Department of Mathematics, Computer Science and Statistics \\
Ghent University\\
Krijgslaan 281-S9, 9000 Ghent, Belgium
}{
jan.goedgebeur@kuleuven.be
}\footnotetext[1]{Research is supported by Internal Funds of KU Leuven and an FWO grant with grant number G0AGX24N.}
\author{Jarne Renders\footnotemark[1] 
}{Department of Computer Science\\
KU Leuven Kulak\\
Etienne Sabbelaan 53, 8500 Kortrijk, Belgium
}{
jarne.renders@kuleuven.be
}
\author{{G\'abor Wiener}\footnotemark[2]
}{ 
Department of Computer Science and Information Theory\\
Budapest University of Technology and Economics\\
M\H uegyetem rkp. 3., 1111 Budapest, Hungary 
}{
wiener@cs.bme.hu
}\footnotetext[2]{Research is supported by project no.\ BME-NVA-02, implemented with the support provided by the Ministry of Innovation and Technology of Hungary from the National Research, Development and Innovation Fund, financed under the TKP2021 funding scheme.}
\author{Carol T. Zamfirescu  
}{
Department of Mathematics, Computer Science and Statistics \\
Ghent University\\
Krijgslaan 281-S9, 9000 Ghent, Belgium\\ and \\ Department of Mathematics \\ Babe\c{s}-Bolyai University \\ Cluj-Napoca, Roumania
}{
czamfirescu@gmail.com
}


\end{center}


\begin{quote}
{\bfseries Abstract:}
We study the fault-tolerance of networks from both the structural and computational point of view using the minimum leaf number of the corresponding graph $G$, i.e.\ the minimum number of leaves of the spanning trees of $G$, and its vertex-deleted subgraphs. We investigate networks that are leaf-guaranteed, i.e.\ which satisfy a certain stability condition with respect to minimum leaf numbers and vertex-deletion. Next to this, our main notion is the so-called \emph{fault cost}, which is based on the number of vertices that have different degrees in minimum leaf spanning trees of the network and its vertex-deleted subgraphs. We characterise networks with vanishing fault cost via leaf-guaranteed graphs and describe, for any given network $N$, leaf-guaranteed networks containing $N$. We determine for all non-negative integers $k \le 8$ except 1 the smallest network with fault cost $k$. We also give a detailed treatment of the fault cost 1 case, prove that there are infinitely many 3-regular networks with fault cost 3, and show that for any non-negative integer $k$ there exists a network with fault cost exactly~$k$.
\end{quote}

\begin{quote}
{\bf Keywords}: Spanning tree, minimum leaf number, fault cost, hamiltonicity
\end{quote}
\newpage

\section{Introduction} 
 
We investigate the fault-tolerance of networks using spanning trees of the corresponding graphs. Optimisation problems concerning spanning trees occur in many applications, such as querying in computer database systems and connection routing. Consider a network modelled by a graph $G$. In various applications, in order to minimise costs, it is desirable to describe a spanning tree of $G$ with as few leaves as possible. The number of leaves in such a tree is called the minimum leaf number---the formal definition is given below. This number shall measure the quality of our solution: the smaller the number of leaves in a spanning tree, the better. In this article we want to investigate the situation when, due to a technical failure in the network, one of its nodes becomes unreachable.
We model this by removing the corresponding vertex, which we call $v$, from $G$. In order to achieve a \emph{fault-tolerant} network, we wish to guarantee that the minimum leaf number of $G - v$ is not larger than that of $G$, i.e.\ vertices in $G - v$ remain at least as well reachable as in $G$, and this holds true for an \emph{arbitrary} node of the network as we do not know where the failure will occur. We shall here formalise this idea and give both theoretical as well as computational results.
We will do so from two angles: on the one hand, we will investigate so-called leaf-guaranteed graphs, in which indeed vertices in $G - v$ remain at least as well reachable as in $G$, while on the other hand we shall introduce the notion of \textit{fault cost}, a general tool to gauge how well a given arbitrary network performs with respect to an occurring fault (i.e.,
in our model, the loss of a node).
The vertex set and the edge set of a graph $G$ is denoted by $V(G)$ and $E(G)$, respectively. The subgraph of $G$ induced by $X \subseteq V(G)$ is denoted by $G[X]$ and let $G-X := G[V(G)\setminus X]$, $G-v := G-\{ v\}$ for any $v\in V(G)$. For $v,w \in V(G)$ let $G + vw$ denote the graph obtained from $G$ by adding the edge $vw$ to $E(G)$. A graph is \emph{connected} if there is a path between any two of its vertices. For an integer $k \ge 1$, if a connected graph $G$ has a set of $k$ vertices $X$ for which $G - X$ has at least two connected components, we call $X$ a \emph{$k$-separator} of $G$. The graph $G$ is \emph{$k$-connected} if it has more than $k$ vertices and does not have an $m$-separator for any $m < k$, and $G$ is of \emph{connectivity} $k$ if it is $k$-connected and admits a $k$-separator or a set of $k$ vertices whose removal leaves a single vertex. For vertices $v$ and $w$ in a graph, we write \textit{$v$-path} for a path with end-vertex $v$, and \textit{$vw$-path} for a $v$-path with end-vertex $w$. We denote the set of all spanning trees of $G$ as ${\cal T}(G)$. A \textit{leaf} is a vertex of degree~1, and, in a tree, a \textit{branch} is a vertex of degree at least 3. A spanning tree with exactly $k$ leaves is a \textit{$k$-leaf spanning tree}. We call $L(G)$ the set of leaves of a graph $G$ and put $\ell(G) := |L(G)|$. A vertex is \textit{cubic} if it has degree~3, and a graph is \textit{cubic} or \textit{$3$-regular} if all of its vertices are cubic. For a subgraph $H$ of some given graph, if a vertex $v$ in $H$ has exactly $k$ neighbours in $H$, we say that $v$ has \textit{$H$-degree} $k$. Throughout the paper, we assume graphs to be simple, undirected, and 2-connected, unless explicitly stated otherwise.

A graph on $n$ vertices is \textit{hamiltonian} if it contains a cycle of length $n$, i.e.\ a \textit{hamiltonian cycle}, and it is \textit{traceable} if it contains a path on $n$ vertices, i.e.\ a \textit{hamiltonian path}. We do not consider $K_1$ or $K_2$ to be hamiltonian. Following~\cite{Wi17}, the \emph{minimum leaf number} ml$(G)$ of a graph $G$ is defined to be 1 if $G$ is hamiltonian, $\infty$ if $G$ is disconnected, and $\min_{T \in {\cal T}(G)} \ell(T)$ otherwise. It is worth pointing out that the minimum leaf number of a graph is bounded above by its independence number, see~\cite[Proposition~8]{GHHSV04}. In a graph $G$, we will call a spanning tree or hamiltonian cycle $S$ with ${\rm ml}(S) = {\rm ml}(G)$ an \emph{ml-subgraph}. For further results on trees with a minimum number of leaves and equivalent problems, we refer to~\cite{GHHSV04,SW08,BFGL13,GOVW19}, and for a pertinent US Patent of Demers and Downing, Oracle Corp., see~\cite{DD00}.

A graph $G$ is \emph{$k$-leaf-guaranteed} if $k = {\rm ml}(G) \ge {\rm ml}(G - v)$ for all $v \in V(G)$. We denote the family of all $k$-leaf-guaranteed graphs with ${\cal L}_k$ and call a graph $G \in \bigcup_k {\cal L}_k$ \emph{leaf-guaranteed}. It is easy to show (but we nonetheless give the proof in the next proposition, for completeness' sake) that $\{{\rm ml}(G - v)\}_{v \in V(G)} \subset \{ k - 1, k \}$ for any $k$-leaf-guaranteed graph $G$.
We write ${\cal L}_k^\ell$ for the set of all graphs $G \in {\cal L}_k$ satisfying ${\rm ml}(G - v) = \ell$ for all $v \in V(G)$, where $\ell \in \{ k - 1, k\}$. Note that ${\cal L}_k \ne {\cal L}_k^k \cup {\cal L}_k^{k-1}$, since there exist graphs of which the vertex-deleted subgraphs  have non-constant minimum leaf number. 

Deciding whether a given graph has a certain minimum leaf number is an NP-complete problem, as it includes the hamiltonian cycle problem as a special case. Leaf-guaranteed graphs offer a common framework for a series of important graph families. ${\cal L}_k^k$ and ${\cal L}_k^{k-1}$ are exactly the \emph{leaf-stable} and \emph{leaf-critical} graphs, respectively, as investigated in~\cite{Wi17}. These generalise hypohamiltonian and hypotraceable graphs and their applications include the solution to a problem of Gargano et al.~\cite{GHHSV04} concerning the wave division multiplexing technology in optical communication. We recall that a graph is \textit{hypohamiltonian} (\textit{hypotraceable}) if the graph itself is non-hamiltonan (non-traceable) yet all of its vertex-deleted subgraphs are hamiltonian (traceable). Hypohamiltonicity---for an overview of theoretical results see the survey of Holton and Sheehan~\cite[Chapter~7]{HS93}---has been applied in operations research in the context of the monotone symmetric traveling salesman problem~\cite{Gr80,GW81} as well as in coding theory~\cite{LMPC12}. Applications related to hypohamiltonicity also appear in the context of designing fault-tolerant networks, see~\cite{LKP05, WHH98}. Moreover, various graph generation algorithms have been designed for hypohamiltonian graphs and related families~\cite{AMW97,GNZ20}. The family ${\cal L}_1^1$ is known as the \emph{$1$-hamiltonian} graphs, a classical notion in hamiltonicity theory~\cite{CKL70}. In applications, these graphs are often called \emph{$1$-vertex fault-tolerant}~\cite[Chapter~12]{HL09}. ${\cal L}_2 \cup {\cal L}_{3}^2$ are exactly the so-called \emph{platypus graphs}~\cite{GNZ20,Za18} which are connected to the Steiner-Deogun property~\cite{KLM96} as described in~\cite{Z}. 


In Section~\ref{sec:lgg} we give structural properties of leaf-guaranteed graphs and then prove that for any network~$N$ on $n$ nodes one can describe a leaf-guaranteed network with fewer than $16n$ nodes which contains $N$. In Section~\ref{sec:fc}, the paper's main results are given. These revolve around the notion of a network's \textit{fault cost}. We first give this new notion's formal definition and then give a series of structural results. This is complemented by an algorithm, which we also implemented, to compute the fault cost. For instance, we determine for all non-negative integers $k \le 8$ except 1 the smallest network with fault cost~$k$. We also give a detailed treatment of the difficult fault cost 1 case, prove that there are infinitely many 3-regular networks with fault cost 3, and show that for any non-negative integer $k$ there exists a graph with fault cost exactly $k$. The paper concludes with Section~\ref{sec:probs}, in which we discuss open problems.

\section{Leaf-guaranteed graphs}\label{sec:lgg} 

We begin by summarising some fundamental properties of leaf-guaranteed graphs. 

\begin{prop}\label{prop:lgg} 
Let $G \ne K_2$ be a $k$-leaf-guaranteed graph. Then the following hold.
\begin{enumerate}[label=\normalfont(\roman*)]
\item $G$ is $2$-connected, but not necessarily $3$-connected, and the maximum degree of $G$ is at least~$3$.
\item ${\rm ml}(G - v) \in \{ k-1, k \}$ for all $v \in V(G)$.  
\item For any vertex $v$ in $G$ there exists an ml-subgraph $T$ of $G$ such that $v$ is not a leaf of $T$. Moreover, for every $k$ there exists a $k$-leaf-guaranteed graph $G$ containing a vertex $x$ which is not a leaf in any ml-subgraph of $G$. 
\end{enumerate}
\end{prop}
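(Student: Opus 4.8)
The plan is to treat the three parts in turn; (i), (ii) and the first statement of (iii) are short, and the real work is the construction needed for the last statement.

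\emph{Part (i).} For $2$-connectivity, the point is that a $k$-leaf-guaranteed graph is connected (since ${\rm ml}(G)=k<\infty$) and has no cut-vertex: if $v$ were one, then $G-v$ would be disconnected and ${\rm ml}(G-v)=\infty>k$, contradicting the hypothesis. For the degree bound, I would note that if $\Delta(G)\le 2$ then, as $G$ is $2$-connected and $G\ne K_2$, $G$ is a cycle $C_n$ with $n\ge 3$; but then ${\rm ml}(C_n)=1$ whereas ${\rm ml}(C_n-v)=2$, so $C_n$ is not leaf-guaranteed, a contradiction, and hence $\Delta(G)\ge 3$. Finally, I would witness the failure of $3$-connectivity by an explicit small example, say a platypus graph of connectivity $2$ (such graphs lie in ${\cal L}_2\cup{\cal L}_3^2$), checked by inspection.

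\emph{Part (ii).} The inequality ${\rm ml}(G-v)\le k$ — in particular its finiteness, so that $G-v$ is connected — is just the definition of $k$-leaf-guaranteed. For ${\rm ml}(G-v)\ge k-1$ I would take an ml-subgraph $S$ of $G-v$ and a neighbour $u$ of $v$ in $G$: if $S$ is a spanning tree then $S+uv$ is a spanning tree of $G$ with at most $\ell(S)+1$ leaves, and if $S$ is a Hamiltonian cycle (so ${\rm ml}(G-v)=1$) then deleting one of the two edges of $S$ at $u$ and adding $uv$ yields a Hamiltonian path of $G$ with $2$ leaves; either way $k={\rm ml}(G)\le{\rm ml}(G-v)+1$.

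\emph{Part (iii), first statement.} Fix $v$ and take an ml-subgraph $S$ of $G$. If $k=1$ then $S$ is a Hamiltonian cycle and we are done, and if $v$ is not a leaf of $S$ we are done; so suppose $k\ge 2$, $S$ is a $k$-leaf spanning tree, and $v$ is a leaf of $S$ with $S$-neighbour $u$. By (i), $v$ has a further neighbour $w$ in $G$, and since $v$ is a leaf, the $vw$-path of $S$ is $v$ followed by the $uw$-path $P$ of $S$. I would set $T:=S+vw-e$, where $e$ is the edge of $P$ incident with $w$: then $S+vw$ contains a unique cycle, formed by $v$, $P$ and $vw$, and $e$ lies on it (and is not incident with $v$), so $T$ is a spanning tree of $G$ with $\deg_T(v)=2$. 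Comparing $S$ with $T$, the only vertices whose leaf-status can change are $v$, which ceases to be a leaf, and the neighbour of $w$ on $P$, which has $S$-degree at least $2$ and becomes a leaf of $T$ only if that degree equals $2$; thus $\ell(T)\le k$, and since ${\rm ml}(G)=k$ this forces $\ell(T)=k$, so $T$ is an ml-subgraph of $G$ in which $v$ is not a leaf. (Note this argument uses only that $G$ is $2$-connected.)

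\emph{Part (iii), second statement.} For $k=1$ any member of ${\cal L}_1$, e.g.\ $K_4$, works: its ml-subgraphs are Hamiltonian cycles, in which no vertex is a leaf. For $k\ge 2$ the plan is to exhibit $G\in{\cal L}_k$ together with a vertex $x$ that has degree at least $2$ in every $k$-leaf spanning tree of $G$, by combining a base graph of leaf-number $k$ with a forcing gadget at $x$ that makes $x$ behave like a hub in minimum-leaf spanning trees. The main obstacle, and the step I expect to be delicate, is that the naive hub graphs are not leaf-guaranteed — for instance in $K_{2,k+1}$ the vertex $x$ is forced to be a non-leaf of every $k$-leaf spanning tree, but ${\rm ml}(K_{2,k+1}-x)={\rm ml}(K_{1,k+1})=k+1>k$ — so the gadget must keep the neighbours of $x$ well spanned after $x$ is deleted while not creating a spanning tree of $G$ with fewer than $k$ leaves. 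Once such a $G$ is fixed, confirming $G\in{\cal L}_k$ and the forcing property at $x$ reduces to checking its vertex-deleted subgraphs one by one.
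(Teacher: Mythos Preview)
Parts (i), (ii), and the first half of (iii) are correct and essentially follow the paper's argument; your treatment of the first half of (iii) is in fact slightly cleaner, since by always deleting the cycle edge at the \emph{new} neighbour $w$ you cover in one stroke what the paper splits into two cases (according to whether $w$ is a leaf of $S$).

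The genuine gap is the second half of (iii). You correctly diagnose why the obvious hub graphs such as $K_{2,k+1}$ fail, but you stop at a plan (``combine a base graph with a forcing gadget'') and never produce the gadget; the proof as written therefore does not establish the ``moreover'' clause. The paper's solution is a concrete construction: take $k$ disjoint copies $P^1,\dots,P^k$ of the Petersen graph with an edge $vw$ removed, identify all the $v$-copies to a single vertex $x$ and all the $w$-copies to a single vertex $y$. Non-hamiltonicity of the Petersen graph forces every spanning tree to have a leaf in each component of $G_k-x-y$, so ${\rm ml}(G_k)\ge k$ and $x$ can never be a leaf in a $k$-leaf spanning tree; traceability and hypohamiltonicity of the Petersen graph supply the hamiltonian $v$-paths in $P$ and in $P-w$ needed to show ${\rm ml}(G_k)=k$ and ${\rm ml}(G_k-u)\le k$ for every $u$. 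This is exactly the kind of gadget your outline asks for but does not provide, and finding it is the substantive content of this part of the proposition.
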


\begin{proof} (i) Assume $G$ has a $1$-separator $\{ x \}$. Then $G - x$ is disconnected, whence ${\rm ml}(G) = \infty$, a contradiction. Every leaf-stable graph and every leaf-critical graph is leaf-guaranteed. Leaf-stable and leaf-critical graphs of connectivity 2 were described in~\cite{OWZ20}, so leaf-guaranteed graphs need not be 3-connected. Since a leaf-guaranteed graph is 2-connected but cycles are not leaf-guaranteed, its maximum degree must be at least~3. 

(ii) Since ${\rm ml}(K_1) = 0$ and ${\rm ml}(K_2) = 2$, the assertion does not hold if $G = K_2$, although $K_2$ is leaf-guaranteed. We now prove the statement for every graph $G \ne K_2$ and assume this henceforth tacitly. We have that ${\rm ml}(G)$ is a positive integer or $\infty$ unless $G = K_1$. As $G$ is leaf-guaranteed we have ${\rm ml}(G - v) \le {\rm ml}(G)$ for any vertex $v$ in $G$ by definition, so the statement holds for all graphs $G$ with ${\rm ml}(G) \le 2$. 

So let ${\rm ml}(G) = k \ge 3$. We continue with a proof by contradiction and suppose ${\rm ml}(G - v) \le k - 2$ for some vertex $v$ in $G$. If there exists a hamiltonian cycle $\mathfrak{h}$ of $G - v$ we can easily modify $\mathfrak{h}$ to a hamiltonian path in $G$, so ${\rm ml}(G) \le 2$, contradicting ${\rm ml}(G) = k \ge 3$. So ${\rm ml}(G - v) \ge 2$ certainly holds for all $v$ in $G$.

Hence we may assume that there exists a spanning tree $T$ of $G - v$ with at most $k - 2 \ge 2$ leaves. Let $w \in N(v)$. Then $\ell(T + vw) \le k - 1$, so ${\rm ml}(G) \le k - 1$, a contradiction. On the other hand, ${\rm ml}(G - v) \ge k + 1$ is impossible, since $G$ is $k$-leaf-guaranteed, so by definition ${\rm ml}(G - v) \le k$.

(iii) Both statements are obviously true for $k = 1$ (as no leaves are present), so assume henceforth $k \ge 2$. In a $k$-leaf-guaranteed graph $G$, consider a spanning tree $T$ with $k$ leaves, one of which shall be $v$. Let $w$ be adjacent to $v$ in $T$ and $u \ne w$ a vertex adjacent to $v$ in $G$ ($u$ exists as $G$ is 2-connected by Proposition~2). Add the edge $uv$ to $T$, resulting in the graph $T'$.

If $u$ is a leaf in $T$, denote by $s$ the vertex adjacent to $u$ in $T$. Removing $us$ from $T'$ we obtain a new spanning tree $T_0$ of $G$. In $T$ as well as $T_0$ the vertex $u$ is a leaf, but $v$ is not a leaf anymore in $T_0$, so $T_0$ certainly does not have more leaves than $T$. In $T_0$ the vertex $s$ must be a leaf, as otherwise $T_0$ would be a spanning tree of $G$ with fewer than $k$ leaves, which is absurd. So $T_0$ is a spanning tree of $G$ with $\ell(T) = \ell(T_0)$ and $v$ is a leaf of $T_0$.

Consider now the situation that $u$ is not a leaf in $T$. Remove from $T'$ the unique edge lying on the unique cycle of $T'$ and incident with $u$ but not $v$. We obtain a new spanning tree $T_1$ of $G$ in which $v$ is not a leaf. Since $\ell(T) = k = {\rm ml}(G)$ the inequality $\ell(T_1) < \ell(T)$ is impossible, so $\ell(T_1) = \ell(T).$ 

For the second statement, consider the Petersen graph $P$ and two adjacent vertices $v$ and $w$ in $P$. It is well-known that both $P$ and $P - w$ contain a hamiltonian path with an endpoint in $v$, a property we will refer to as $(\star)$. (The former follows from the fact that $P$ is traceable and vertex-transitive, and the latter from the fact that $P$ is hypohamiltonian.)
Consider $k$ pairwise disjoint copies $P^1, \ldots, P^k$ of $P - vw$, denoting the respective copies of $v$ and $w$ in $P^i$ by $v_i$ and $w_i$. We construct a graph $G_k$ by identifying all $v_i$'s, yielding one vertex $x$, and identifying all $w_i$'s, yielding one vertex $y$. We shall see the $P^i$'s as subgraphs of $G_k$.

Let $T$ be a spanning tree of $G_k$. Since $P$ is non-hamiltonian, $T$ contains at least one leaf in each component of $G_k - x - y$. Therefore, ${\rm ml}(G_k) \ge k$. On the other hand, by $(\star)$, the graph $G_k$ contains a spanning tree with exactly $k$ leaves, whence, the minimum leaf number of $G_k$ is precisely $k$. In order to prove that $G_k$ is $k$-leaf-guaranteed, we need to show that ${\rm ml}(G - v) \le k$ for every $v \in V(G_k)$. We need to differentiate between two cases. If $v \in \{ x, y \}$, then we make use of $(\star)$ and obtain the desired conclusion. If $v \notin \{ x, y \}$, then there exists an $i \in \{ 1, \ldots, k \}$ such that $v \in V(P^i)$. Since $P$ is hypohamiltonian, there exists a hamiltonian $x$-path in $P^i - v$. Combining this with $(\star)$, we obtain a spanning tree of $G_k - v$ with exactly $k$ leaves. Thus, $G_k$ is $k$-leaf-guaranteed. To conclude the proof, we will show that there exists no spanning tree $T$ in $G_k$ which has $k$ leaves, and one of the leaves of $T$ is $x$. However, as noted above, $T$ contains at least one leaf in each component of $G_k - x - y$, of which there are $k$ by construction. Therefore, $T$ has at least $k + 1$ leaves, a contradiction. \end{proof}


It is natural to ask whether, given a network $N$, we can find a larger network $N'$ containing a copy of $N$ (or, in graph-theoretical terms: $N$ is an induced subgraph of $N'$) such that $N'$ is leaf-guaranteed. It is known that this is possible if $N'$ may be 40 times larger than $N$: in~\cite{ZZ18} it was shown that any graph $G$ of order at most $n$, connected or not, occurs as induced subgraph of some hypohamiltonian and thus 2-leaf-guaranteed graph of order $40n$. Relaxing the conditions and only requiring containment in a leaf-guaranteed graph, we can describe significantly smaller solutions.

\begin{thm} \label{thm:ind-subgr}
Let $G \ne K_1$ be a possibly disconnected graph of order $n$ containing a longest path~$\mathfrak{p}$ which has $p$ vertices. Put $k := 2n - p + 1$. Then $G$ occurs as an induced subgraph of some $1$-leaf-guaranteed graph of order $k + 1$ as well as of a $k$-leaf-guaranteed graph of order 
$8k$.
\end{thm}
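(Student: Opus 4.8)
The plan is to handle the two claims separately, as they use rather different constructions, though both rely on adding a small "gadget" to $G$ so that spanning trees of the enlarged graph are forced to traverse all of $G$ along a long path.

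For the $1$-leaf-guaranteed part, first I would recall that a graph is $1$-leaf-guaranteed exactly when it is $1$-hamiltonian, i.e.\ it and all its vertex-deleted subgraphs are hamiltonian. So the goal is to build a $1$-hamiltonian graph $H$ of order $k+1 = 2n-p+2$ containing $G$ as an induced subgraph. The natural idea is to take $G$ together with a set $S$ of $k+1-n = n-p+1$ new vertices and join every new vertex to every vertex of $G$ (a complete join $G + \overline{K_{n-p+1}}$, or possibly with $S$ an independent set). The point is that in the join, a hamiltonian cycle alternates between "old" segments (subpaths of $G$) and single vertices of $S$; if $G$ has a longest path on $p$ vertices, then $V(G)$ decomposes into at most $n-p+1$ paths, and these can be threaded together through the $n-p+1$ new vertices to form a hamiltonian cycle. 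Deleting any vertex leaves enough new vertices to still cover $G$ (now with $n-1$ old vertices, covered by at most $n-p$ paths — here I'd want $G-v$ to still have no path longer than $p$, which holds since $G-v$ is a subgraph of $G$) so hamiltonicity survives vertex deletion. I would need to check the edge cases ($p$ small, $G$ disconnected, $G=K_2$) and verify the order arithmetic carefully.

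For the $k$-leaf-guaranteed part of order $8k$, the plan is to take the $1$-leaf-guaranteed graph $H$ just constructed, on $k+1$ vertices, and combine $k$ (or a related number of) copies of a suitable gadget so that any spanning tree is forced to have exactly $k$ leaves, one per copy, while the whole structure inherits $k$-leaf-guaranteedness. The model for this is precisely the $G_k$ construction from Proposition~\ref{prop:lgg}(iii): take copies of a hypohamiltonian-type building block with two distinguished vertices, identify all first distinguished vertices to a single vertex $x$ and all second ones to $y$, and route a path of $G$ through one of the copies. Using $H$ (which is traceable, being hamiltonian) as the "guest" inside one copy, and $H-v$-style arguments for the vertex deletions, one obtains a graph whose minimum leaf number is exactly $k$ because each of the $k$ components of (graph $- x - y$) must contain a leaf, and deleting any vertex still admits a spanning $k$-leaf tree. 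The factor $8$ should come from bounding: $k$ copies of a gadget of bounded size, each of size roughly $8$ relative to the $(k+1)$-vertex host — I would make the gadget on $O(1)$ vertices and count $k \cdot c \le 8k$.

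The main obstacle I expect is the $k$-leaf-guaranteed construction's bookkeeping: one needs a single gadget that simultaneously (a) forces at least one leaf per copy in every spanning tree, (b) admits, together with the host $H$ placed in one designated copy, a spanning tree with exactly $k$ leaves, (c) survives the deletion of $x$, of $y$, and of an arbitrary internal vertex while keeping a spanning $k$-leaf tree, and (d) has size at most $8$ per unit of $k$. Balancing (b)/(c) against (a) is delicate — the gadget must be "just non-hamiltonian enough" — and fitting $H$ of order $k+1$ into the picture without blowing past $8k$ means the per-copy overhead must be tightly controlled and $H$ itself must be reused across copies rather than duplicated. I would expect the cleanest route is to make all $k$ copies identical small gadgets (so the host graph $G$ is embedded via the longest-path/$1$-leaf-guaranteed trick into the union), keeping the analysis uniform, and to verify the order bound $8k$ at the very end once the explicit vertex count of the gadget is pinned down.
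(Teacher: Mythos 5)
For the first claim your join construction is viable but needs repairs, and it is genuinely different from the paper's. You need $k+1-n = n-p+2$ new vertices, not $n-p+1$, and the variant where the new vertices form an independent set fails for small $p$ (for edgeless $G$ it gives $K_{n,n+1}$, which is not hamiltonian), so you must take the new vertices as a clique; then the standard path-cover argument (path cover number of $G$ is at most $n-p+1$, and deleting a vertex raises it by at most one) does give a $1$-hamiltonian, hence $1$-leaf-guaranteed, graph of order $k+1$ with $G$ induced. The paper instead threads a $k$-cycle $C$ through all of $V(G)$ (the path $\mathfrak{p}$ on consecutive cycle vertices, the remaining $n-p$ vertices at every second position so no new adjacencies arise) and adds a single hub joined to all of $C$. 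This difference is not cosmetic: the cycle $C$ is exactly the scaffold on which the paper's second construction is built.

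For the second claim there is a genuine gap. Your plan — $k$ constant-size copies glued at two hubs $x,y$ in the style of the $G_k$ construction of Proposition~\ref{prop:lgg}(iii), with the host graph (and hence all of $G$) confined to one copy — leaves unresolved precisely the step on which the theorem turns, and the architecture itself is problematic. The copy containing the arbitrary graph $G$ meets the rest of the graph only in $\{x,y\}$, so after deleting an internal vertex $u$ of that copy you must exhibit a spanning structure of the copy minus $u$ that attaches at $x$ and $y$ without creating additional leaves; for arbitrary $G$ nothing in your construction provides this rerouting, so ${\rm ml}$ of the vertex-deleted graph can exceed $k$ and $k$-leaf-guaranteedness is lost. (Conversely, if the host copy does admit an $x$--$y$ traversal, it no longer forces a leaf, so obtaining ${\rm ml}$ exactly $k$ needs separate care.) The paper resolves this by \emph{not} concentrating $G$ in one copy: it distributes $G$ along the $k$-cycle $C$ of the first construction and replaces each of the $k$ edges of $C$ by the gadget $\Xi_8$ (Fig.~\ref{fig:replace_with_Xi_8}). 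Each copy of $\Xi_8$ has no hamiltonian path between its two cycle vertices, forcing at least one leaf per copy and hence ${\rm ml} \ge k$, while for every vertex $u$ of $\Xi_8$ the graph $\Xi_8 - u$ has a hamiltonian path ending in a cycle vertex (Fig.~\ref{fig:Xi_8_K1-traceable}); this local property is what repairs \emph{every} vertex deletion, including deletion of vertices of $G$ itself, since those are cycle vertices flanked by gadgets. Your items (a)--(d) are exactly these requirements, and they are not routine bookkeeping but the substance of the proof; as proposed, the gadget, the embedding of $G$, and the deletion analysis are not pinned down enough for any of them to be verified.
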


\begin{proof} Consider a $k$-cycle $C = v_1 \ldots v_k$ disjoint from $G$. Put $\mathfrak{p} = w_1 \ldots w_p$ and $V(G) \setminus V(\mathfrak{p}) = \{ w_{p+1}, \ldots, w_n \}$. Identify $v_i$ with $w_i$ for all $i \in \{ 1, \ldots, p \}$ and $v_{p + 2j}$ with $w_{p+j}$ for all $j \in \{ 1, \ldots, n-p \}$. We have produced a graph $H$, where the purpose of the identification we just performed is to guarantee that $G$ is indeed an induced subgraph of $H$.

For the first part of the statement, consider the graph $H'$ of order $2n - p + 2 = k + 1$ obtained by taking $H$ and an additional vertex $v_0$, and joining by an edge $v_0$ to $v_i$ for all $i \in \{ 1, \ldots, k \}$. That $H'$ is 1-leaf-guaranteed is easy to see: Considering $C$ now as a subgraph of $H'$, we can replace the edge $v_1v_2$, which lies in $C$, by the path $v_1v_0v_2$ to obtain a hamiltonian cycle in $H'$. The cycle $C$ itself is a hamiltonian cycle in $H' - v_0$. Consider $v$, an arbitrary vertex in $H' - v_0$, and let $u$ and $w$ be $v$'s neighbours on $C$. Adding to the $uw$-path $C - v$ the path $uv_0w$ yields a hamiltonian cycle in $H' - v$. Finally, it is clear that $G$ is an induced subgraph of $H'$, so the first part of the theorem is proven.

Let $\Xi_8$ be the 8-vertex graph obtained by connecting two vertices by three edges, subdividing once any two of the three edges, and blowing up the two cubic vertices to triangles (see the right-hand side of Fig.~\ref{fig:replace_with_Xi_8}). For the theorem's second statement, replace in $H$ every edge of $C$ by a copy of $\Xi_8$, see Fig.~\ref{fig:replace_with_Xi_8}. We now show that the resulting graph $H''$ is $k$-leaf-guaranteed. Let $S \in {\cal S}_{{\rm ml}}(H'')$. It is easy to check that $\Xi_8$ contains no hamiltonian $vw$-path, where $v$ and $w$ are vertices as shown in Fig.~\ref{fig:replace_with_Xi_8}. Thus $S$ must contain at least one leaf in each copy of $\Xi_8$ present in $H''$, whence $\ell(S) \ge k$.

\begin{figure}[!htb]
    \centering
    \begin{tikzpicture}[fo/.style={draw, circle, fill=black, minimum size={0.15cm}, inner sep=0cm, scale=0.65}, scale=0.4]
        \node[fo] (v) at (0,0) {};
        \node[fo] (w) at (8,0) {};
        \node[] (v') at (-3,-1) {};
        \node[] (w') at (11,-1) {};
        \draw (v) -- (w);
        \draw (v) -- (v');
        \draw (w) -- (w');
    \end{tikzpicture}
    \begin{tikzpicture}[scale=0.4]
        \node[scale=1.5] at (0,2) {$\longrightarrow$};
        
        \node[fill=white] at (0,-1) {}; 
        \node[fill=white] at (0,4) {};
    \end{tikzpicture}
    \begin{tikzpicture}[fo/.style={draw, circle, fill=black, minimum size={0.15cm}, inner sep=0cm, scale=0.65}, scale=0.4]
        \node[fo, label=below:$v$] (v) at (0,0) {};
        \node[fo] (1) at (0,4) {};
        \node[fo] (2) at (2,2) {};
        \node[fo, label=below:$w$] (w) at (8,0) {};
        \node[fo] (3) at (8,4) {};
        \node[fo] (4) at (6,2) {};
        \node[fo] (5) at (4,4) {};
        \node[fo] (6) at (4,2) {};
        \node[] (v') at (-3,-1) {};
        \node[] (w') at (11,-1) {};
        \draw (v) -- (w);
        \draw (v) -- (v');
        \draw (w) -- (w');
        \draw (v) -- (1) -- (2) -- (v);
        \draw (w) -- (3) -- (4) -- (w);
        \draw (1) -- (5) -- (3);
        \draw (2) -- (6) -- (4);
    \end{tikzpicture}
    \caption{Replacing an edge with a copy of the graph $\Xi_8$.}
    \label{fig:replace_with_Xi_8}
\end{figure}
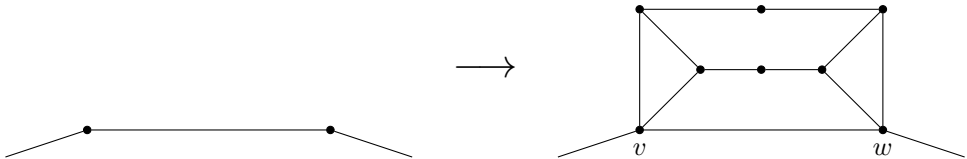

It is straightforward to construct a tree with exactly $k$ leaves in $H''$, so $\mathrm{ml}(H'') = k$. It remains to prove that $\mathrm{ml}(H'' - v)\le k$ for every $v \in V(H'')$. By construction, we can restrict ourselves to an arbitrary but fixed copy of $\Xi_8$ in $H''$. It is easy to check that for every vertex $u$ in $\Xi_8$, there exists a hamiltonian path in $\Xi_8 - u$ with at least one endpoint in $V(\Xi_8)\cap V(C)$; see Fig.~\ref{fig:Xi_8_K1-traceable} in Appendix~\ref{app:figure_proof_thm2}. As before, we can complete this path to a spanning tree of $H'' - u$ with exactly $2k$ leaves. We have proven that ${\rm ml}(H'' - v) \le k$ for every vertex $v$ in $H''$, whence, $H''$ is $k$-leaf-guaranteed.
\end{proof}

We end this section with a brief remark regarding a question of Gr\"otschel. He asked in~\cite[Problem 4.56]{GGL95} whether \textit{bipartite} hypotraceable graphs exist, i.e.\ whether any member of ${\cal L}_3^2$ is bipartite; note that hypohamiltonian graphs, i.e.\ members of ${\cal L}_2^1$, cannot be bipartite. As there has been little progress on this question, we relax it and ask for bipartite leaf-guaranteed graphs. 
In the upcoming Section~\ref{sec:computation} we describe an algorithm to determine the fault cost of a graph, a notion we shall define later. For a given graph $G$ it computes ml-subgraphs of $G$ and $G - v$ for all $v\in V(G)$. A straightforward adaptation allows to verify whether a graph is leaf-guaranteed. We generate $2$-connected bipartite graphs using \texttt{geng}~\cite{MP14} and then apply our program~\cite{GRWZ25} to check whether the input graphs are $k$-leaf-guaranteed for a certain $k$.
Our computations yield that up to order $15$, there is exactly one $2$-connected bipartite leaf-guaranteed graph
of order $12$ (see Fig.~\ref{fig:bip_leaf_guar}) and 
five more $2$-connected bipartite leaf-guaranteed graphs on order $14$. Restricting to girth at least $6$ up to order 22 there is this aforementioned graph on order $12$ as well as six more leaf-guaranteed graphs on order 16; $27$ on order 18; $815$ on order 20; and 11\,775 on order 22.
All of them are $2$-leaf-guaranteed bipartite graphs and hence are $2$-leaf-stable, i.e.\ in $\mathcal{L}_2^2$. We did not find any $k$-leaf-guaranteed examples with $k > 2$.


\begin{figure}[!htb]
    \centering
    \newcommand{\s}{8}
    \begin{tikzpicture}[transform shape, scale=1.5, rotate=-45]
        \tikzstyle{fo} = [draw, circle, fill=black, minimum size={0.15cm}, inner sep=0cm, scale=0.65]
        
        \node[draw=black,minimum size=2cm,regular polygon,regular polygon sides=\s] (a) {};
        
        \foreach \x in {1,2,...,\s}
        	\node[fo] (a\x) at (a.corner \x) {}; 
        \foreach \x in {1,2,...,\the\numexpr\s/2\relax} {
        	\draw (a\x) to (a\the\numexpr\x+\s/2\relax); 
            \node (b) at (a\the\numexpr\x+\s/2\relax) {};
            \node[fo] at ($(a\x)!0.3!(b)$) {};
        }
    \end{tikzpicture}
    \caption{The smallest bipartite leaf-guaranteed graph.}
    \label{fig:bip_leaf_guar}
\end{figure}
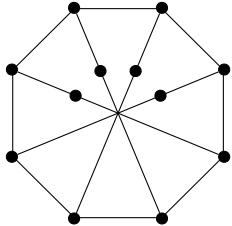

\section{Fault cost}\label{sec:fc}

\subsection{Definition}

We now introduce the notion of fault cost. From an application-oriented perspective it is important to point out that when a node drops from the network, the ml-subgraph used in the fault-free network may require changing equipment\footnote{We see vertices of different degrees as requiring different equipment in the network. As Salamon and Wiener write in~\cite{SW08}, various problems related to ours have an objective function that depends on the vertex degrees of the spanning tree, see for instance~\cite{GHHSV04,LR98,So98}. This model is particularly useful when designing networks where device costs depend on the required routing functionality.} in many nodes in order to obtain an ml-subgraph in the faulty network, which is undesirable. We formalise this by introducing, for a spanning tree or hamiltonian cycle $S$ of $G$ and a spanning tree or hamiltonian cycle $S_v$ of $G - v$, the \emph{transition cost from $S$ to $S_v$} as
$$\tau(S,S_v) := |\{ w \in V(G) \setminus \{ v \} : \deg_S(w) \ne \deg_{S_v}(w) \}|.$$
Thus, this is the number of vertices in $G$ which need to receive different equipment after the loss of a node; the lost node is ignored in this process. For a given graph $G$, denote by ${\cal S}_{{\rm ml}}(G)$ the set of all of its ml-subgraphs. In order to quantify the optimal solution in a worst-case scenario for the network itself, we introduce for a given graph $G$ and an ml-subgraph $S$ of $G$ the quantity
$$\varphi_S(G) := \max_{v \in V(G)} \min_{S_v \in {\cal S}_{{\rm ml}}(G - v)} \tau(S,S_v).$$
Based on this, we shall consider the \emph{fault cost} $\varphi(G)$ of the graph $G$ representing the network:
$$\varphi(G) := \min_{S \in {\cal S}_{{\rm ml}}(G)} \varphi_S(G).$$

We give an example of how these definitions work in Appendix~\ref{app:fc2_example}. This framework is motivated by the fact that in a practical application one knows the network but must choose one of the ml-subgraphs without knowing which node might
fail---so a subgraph ought to be chosen which minimises the maximum transition cost. We
shall call such a subgraph, i.e.\ an ml-subgraph $S$ in $G$ satisfying $\varphi_{S}(G) = \varphi(G)$, \textit{optimal}.
Finally, when a vertex does fail, one again has the freedom to choose an ml-subgraph (in the
faulty network) bearing the lowest transition costs\footnote{In this model, it may happen that for a certain ml-subgraph only a few nodes are responsible for a high transition cost (and all other nodes yield far lower transition costs), but that for another ml-subgraph the transition costs are more evenly distributed and on average worse, but that the worst-case transition cost is in fact lower, so this latter tree will be chosen. This may not be desirable in certain applications, in particular when nodes fail with varying probabilities. However, in this article we are interested in conserving (or decreasing) the minimum leaf number whenever \emph{any} vertex fails, irrespective of failure probability.}.

\subsection{Computational approach}\label{sec:computation}


\subsubsection{An algorithm for computing the fault cost}
\label{subsect:algo}

Henceforth we focus on the description, both structural as well as computational, of graphs with small fault cost. For the latter approach we designed an algorithm to computationally determine the fault cost of a given graph.
We use a backtracking approach and efficiently compute and store the distinct degree sequences of all ml-subgraphs and its vertex-deleted subgraphs.
Once they have been determined, the degree sequences are compared and the fault cost determined. We use pruning rules, explained in more detail later in this section, in several places to speed up the computations. Our implementation of the algorithm is open source and can be found on GitHub~\cite{GRWZ25}. An overview of the algorithm can be found in Algorithm~\ref{alg:fc}. 

\begin{algorithm}[!htb]
\caption{computeFaultCost($G$)}\label{alg:fc}
\begin{algorithmic}
    \If{$G$ is $1$-hamiltonian}
        \State \Return 0
    \EndIf
    \State generateAndStoreMinLeafDegreeSequences($G$) // Algorithm~\ref{alg:findMLSequences}
    \For{$v\in V(G)$}
        \State generateAndStoreMinLeafDegreeSequences($G - v$)// Algorithm~\ref{alg:findMLSequences}
    \EndFor
    \For{Sequence $L$ of $G$}
        \For{$v\in V(G)$}
            \For{Sequence $L_v$ of $G - v$}
                \State Compute transition cost
            \EndFor
            \State Store minimum transition cost over $L_v$'s
        \EndFor
        \State Store maximum of minimum transition costs over $v$'s
    \EndFor
    \State \Return $\varphi(G)$
\end{algorithmic}
\end{algorithm}

In more detail our algorithm works as follows. 
Given an input graph $G$, we first check whether it is $1$-hamiltonian using a program~\cite{GRWZ22} designed by the authors for~\cite{GRWZ24}. If the graph is $1$-hamiltonian---clearly, there are infinitely many such graphs---, then we know by the following proposition that its fault cost is~$0$.

\begin{prop}\label{fc0}
A graph has fault cost $0$ if and only if it is $1$-hamiltonian, i.e.\ $1$-leaf-guaranteed.
\end{prop}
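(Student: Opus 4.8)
The statement to prove is Proposition~\ref{fc0}: a graph has fault cost $0$ if and only if it is $1$-hamiltonian. The plan is to prove both implications directly by unwinding the definition of $\varphi(G)$. First I would handle the easy direction: suppose $G$ is $1$-hamiltonian, i.e.\ $G\in{\cal L}_1^1$. Then ${\rm ml}(G)=1$, so the only ml-subgraph of $G$ is a hamiltonian cycle $S$, in which every vertex has $S$-degree $2$. Likewise, for every $v\in V(G)$ we have ${\rm ml}(G-v)=1$, so every ml-subgraph $S_v$ of $G-v$ is a hamiltonian cycle of $G-v$, again with every vertex (other than $v$) of degree $2$. Hence $\deg_S(w)=2=\deg_{S_v}(w)$ for all $w\in V(G)\setminus\{v\}$, so $\tau(S,S_v)=0$ for every $v$ and every choice of $S_v$; therefore $\varphi_S(G)=0$ and $\varphi(G)=0$.

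For the converse I would argue contrapositively: assume $G$ is not $1$-hamiltonian and show $\varphi(G)\ge 1$. There are two sub-cases depending on whether $G$ is hamiltonian. If $G$ is not hamiltonian, then ${\rm ml}(G)\ge 2$, so every ml-subgraph $S$ of $G$ is a spanning tree with at least two leaves; any such $S$ contains at least one branch or, if ${\rm ml}(G)=2$, is a hamiltonian path whose two endpoints have degree $1$ — in all cases not all vertices of $S$ have the same degree, so $S$ is ``degree-inhomogeneous''. The key observation is that for any ml-subgraph $S_v$ of $G-v$, the total degree sum over $V(G)\setminus\{v\}$ changes in a controlled way between $S$ and $S_v$ (removing a vertex drops the edge count), and this forces at least one vertex $w\neq v$ with $\deg_S(w)\neq\deg_{S_v}(w)$; so $\tau(S,S_v)\ge 1$ for every $v$ and every $S_v$, giving $\varphi(G)\ge 1$. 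If instead $G$ is hamiltonian but not $1$-hamiltonian, then ${\rm ml}(G)=1$, so the unique type of ml-subgraph $S$ is a hamiltonian cycle, but by hypothesis there is some vertex $v$ with ${\rm ml}(G-v)\ge 2$; then every ml-subgraph $S_v$ of $G-v$ is a spanning tree with a leaf, i.e.\ a vertex $w\neq v$ of degree $1$ in $S_v$, while $\deg_S(w)=2$. Hence $\tau(S,S_v)\ge 1$ for this $v$, so $\varphi_S(G)\ge 1$ for every $S$, and thus $\varphi(G)\ge 1$.

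Combining the two directions gives $\varphi(G)=0$ if and only if $G$ is $1$-hamiltonian, and the parenthetical equivalence ``$1$-hamiltonian $=$ $1$-leaf-guaranteed'' is exactly the definition of ${\cal L}_1^1$ recalled in the introduction.

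The main obstacle, modest as it is, lies in the ``not hamiltonian'' sub-case of the converse: one must be careful that it is genuinely impossible for an ml-spanning-tree $S$ of $G$ and an ml-spanning-tree $S_v$ of $G-v$ to agree on the degrees of all vertices of $V(G)\setminus\{v\}$. The clean way to see this is a parity/counting argument: $\sum_{w\neq v}\deg_{S}(w)=2|E(G)\setminus\{\text{edges at }v\}|+\deg_S(v) \ge 2(|V(G)|-2)+1$ while $\sum_{w\neq v}\deg_{S_v}(w)=2(|V(G)|-2)$, and these differ by at least $1$ whenever $\deg_S(v)\ge 1$, which always holds since $S$ spans $G$; hence the degree multisets restricted to $V(G)\setminus\{v\}$ cannot coincide, forcing $\tau(S,S_v)\ge1$. (Even more simply: if every $w\ne v$ had the same degree in $S$ and $S_v$, then $S_v$ plus the edges of $S$ incident to $v$ would have to reproduce $S$, but $S$ has $|V(G)|-1$ edges while $S_v$ has $|V(G)|-2$, and $v$ has at least one neighbour, a contradiction unless $v$ is a leaf of $S$ with its unique $S$-neighbour $w$ satisfying $\deg_S(w)=\deg_{S_v}(w)+1$, again a discrepancy.) Once this counting point is isolated, the rest is bookkeeping.
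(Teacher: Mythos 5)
Your easy direction and your ``hamiltonian but not $1$-hamiltonian'' case are fine (the latter is even slightly leaner than the paper's, which observes the cost there is at least $2$). The problem is in the non-hamiltonian case, where you claim $\tau(S,S_v)\ge 1$ \emph{for every} $v$ and every ml-subgraph $S_v$ of $G-v$, justified by the degree-sum count. The count does not give this: with $S$ a spanning tree and $S_v$ a spanning tree of $G-v$, one has $\sum_{w\ne v}\deg_S(w)=2(n-1)-\deg_S(v)$ and $\sum_{w\ne v}\deg_{S_v}(w)=2(n-2)$, so the two sums differ by $2-\deg_S(v)$; they coincide exactly when $\deg_S(v)=2$, and your assertion that they ``differ by at least $1$ whenever $\deg_S(v)\ge 1$'' is false. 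Indeed the blanket claim cannot be true: for the traceable fault-cost-$1$ graphs studied later in the paper (Claim~\ref{elsoclaim}), deleting an internal vertex of the optimal hamiltonian path admits an ml-subgraph with transition cost $0$. Your ``even more simply'' parenthetical does not rescue this, because it conflates degree sequences with edge sets: $\tau$ only compares degrees, so there is no reason $S_v$ together with the $S$-edges at $v$ should ``reproduce'' $S$.

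The gap is easily repaired, and the repair is precisely the paper's proof: since $\varphi_S(G)$ is a \emph{maximum} over $v$, you only need one good vertex. Because ${\rm ml}(G)\ge 2$, the tree $S$ has a leaf $b$; take $v=b$, so $\deg_S(b)=1$ and $\sum_{w\ne b}\deg_S(w)=2(n-1)-1=2n-3$, which differs from $2(n-2)$ (if $S_b$ is a tree) and from $2(n-1)$ (if $G-b$ were hamiltonian and $S_b$ a cycle), forcing $\tau(S,S_b)\ge 1$ for every choice of $S_b$ and hence $\varphi_S(G)\ge 1$ for every ml-subgraph $S$. (The paper first rules out hamiltonian vertex-deleted subgraphs and then runs exactly this leaf-plus-degree-sum computation with $2(n-1)=1+2(n-2)$.) With that one-line restriction to a leaf, your argument coincides with the paper's; as written, though, the stated inequality and the ``for every $v$'' claim are incorrect.
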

\begin{proof} Recall that we work under the general assumption that our graphs are 2-connected, hence we do not treat $K_2$ (which is 1-leaf-guaranteed but not 1-hamiltonian). It is clear that a 1-hamiltonian graph $G$ satisfies $\varphi(G) = 0$, so let us now assume that there exists a non-1-hamiltonian graph $G$ with $\varphi(G) = 0$. The graph $G$ cannot be hamiltonian, for otherwise there would exist a vertex $x$ in $G$ such that $G - x$ is non-hamiltonian. Thus, any ml-subgraph of $G-x$ would have at least two leaves, in which case the fault cost of $G$ would be at least 2, a contradiction. Therefore $G$ is non-hamiltonian and all of its vertex-deleted subgraphs must also be non-hamiltonian, similarly to the previous situation. Let now $T$ be an optimal ml-subgraph of $G$. By the above arguments, $T$ must be a tree. Consider a leaf $b$ of $T$ and let $T_b$ be an arbitrary ml-subgraph of $G - b$. Again, by the above, $T_b$ must be a tree. Then $\varphi(G) = 0$ implies 
$$\tau(T,T_b) = |\{ v \in V(G) \setminus \{ b \} : \deg_T(v) \ne \deg_{T_b}(v) \}| = 0,$$
so
$$\sum_{v \in V(G)} \deg_T(v) = \deg_T(b) + \sum_{v \in V(G) \setminus \{ b \}} \deg_{T_b}(v),$$
which, if we set $n := |V(G)|$, is equivalent to $2(n - 1) = 1+ 2(n-2),$ a contradiction. 
\end{proof}

We now continue with the description of our algorithm. If $G$ is not $1$-hamiltonian, we will determine all distinct degree sequences of all its ml-subgraphs (see Algorithm~\ref{alg:findMLSequences}). 

\begin{algorithm}[!htb]
\caption{generateAndStoreMinLeafDegreeSequences($G$)}\label{alg:findMLSequences}
\begin{algorithmic}
    \State Create $\mathcal{L}$ // Data structure for storing the sequences
    \If{$G$ is hamiltonian} // Already known if we computed $1$-hamiltonicity
        \State Store single sequence $L$ where all degrees are $2$
        \State \Return $\mathcal{L}$
    \EndIf
    \For{each non-edge $uv$ of $G$}
        \If{$G$ has hamiltonian $uv$-path}
            \State Store sequence $L$, where $u$, $v$ have degree $1$ and $x\neq u,v$ has degree $2$
        \EndIf
    \EndFor
    \If{$\mathcal{L}$ is not empty} // $G$ is traceable
        \State \Return $\mathcal{L}$
    \EndIf
    \For{each ml-subgraph of $G$} // Determined via backtracking and using pruning rules
        \State Compute degree sequence $L$
        \If{$L\not\in\mathcal{L}$}
            \State Store $L$ in $\mathcal{L}$
        \EndIf
    \EndFor
    \State \Return $\mathcal{L}$
    
\end{algorithmic}    
\end{algorithm}

If the graph is hamiltonian (which we do not need to check again), its ml-subgraphs are the hamiltonian cycles of the graph. However, in that case there is only one distinct degree sequence, namely, the one in which every vertex has degree $2$. If the graph is not hamiltonian, using methods from~\cite{GRWZ22}, we determine for every pair of non-adjacent vertices, whether or not there is a hamiltonian path between them. As the graph is not hamiltonian, there cannot be a hamiltonian path between adjacent vertices. For every non-adjacent pair we only need to determine one hamiltonian path as for every such hamiltonian path the degrees of every vertex except for the leaves are $2$ and hence the degree sequences are the same. If we did not find any hamiltonian path in the graph, then we perform an exhaustive search for all ml-subgraphs as follows. 

Using a backtracking algorithm, we generate all spanning trees of the graph. We start with one vertex and every iteration, we choose an edge incident to the current subtree which is not forbidden and first add it to the subtree (if it does not create a cycle), while later we forbid the edge. We then do the same for this newly acquired subtree or newly acquired set of forbidden edges. For the efficiency of the algorithm, we choose an edge for which the endpoint in the subtree $v$ is most constrained.
This means that, denoting the subtree by $T$ and the subgraph of $G$ induced by the forbidden edges by $F$, we want $d_G(v) - d_T(v) - d_F(v)$ to be minimal.
Its other endpoint is the neighbour of $v$ which is most constrained in the same way, such that the edge is not forbidden and not yet in the tree. We can backtrack whenever a vertex of $G$ not yet in the tree is only incident with forbidden edges.

If the tree has size $\lvert V(G)\rvert -1$, we have a spanning tree of our graph. If it has the same number of leaves as the minimum encountered so far, we store its degree sequence in a list. If it is smaller than the minimum encountered so far, we clear the list, store this degree sequence as its first entry and keep track of this new minimum. Since we know the minimum encountered so far, we can actually prune the search as soon as the subtree has more leaves than this minimum. Due to the way we generate the spanning trees, the number of leaves can never decrease unless we backtrack.

In this way, we obtain all ml-subgraphs of $G$ and we repeat this procedure for every vertex-deleted subgraph $G-v$. However, if $v$ was a leaf of an ml-subgraph of $G$, we already know that ${\rm ml}(G)$ is an upper bound for the minimum leaf number of $G -v$ and keep track of this in order to prune earlier. 

Now that we have every distinct degree sequence, we need to compare each degree sequence of the graph with each degree sequence of the vertex-deleted subgraphs. We also use some 
impactful techniques 
to speed up these
computations. For every degree sequence of an ml-subgraph of $G$, we compare it to the distinct degree sequence of a vertex-deleted subgraph in order to determine the minimum transition cost. While checking the transition cost for two degree sequences is linear in the number of vertices, we also keep track of the leaves and degree $2$ vertices of the graph, by means of bitsets.
These low degree vertices can then be compared in constant time and we only need to iterate over the vertices of degree at least $3$, i.e.\ the branches, in the ml-subgraphs. As the number of branches is relatively small (for example of the 153\,620\,333\,545 graphs of order $12$  which are $2$-connected, only 144 of their ml-subgraphs contained four branches, while none contained more than four), the gain from this method outweighs the overhead of creating the extra bitsets and storing the degree $2$ vertices in terms of efficiency. We also apply a small optimisation
when iterating over the vertices. When the transition cost is already higher than one found earlier, we need not iterate over the remaining vertices. The minimum transition cost is then compared to those of other vertex-deleted subgraphs to determine the maximum. Finally, the minimum of these values is determined over all ml-subgraphs of $G$. 

While it is relatively easy to prove the correctness of this algorithm, we also performed various tests in order to give evidence for the correctness of the implementation. A description of these tests can be found in Appendix~\ref{app:correctness}.

\subsubsection{Computational results}

We use our implementation of Algorithm~\ref{alg:fc} to obtain counts for the number of graphs attaining each fault cost for small graphs. We do this by generating all $2$-connected graphs for a specific order using the program \texttt{geng}, which is part of the \texttt{nauty} library~\cite{MP14}. We then determine the fault cost for each of these generated graphs using our algorithm. Via \texttt{geng} it is easy and efficient to restrict the generation to graphs of at least a given girth (the \textit{girth} of a graph $G$ which is not a tree is the length of a shortest cycle in $G$) and go up to higher orders in this way. The results of our computations\footnote{
These computations were performed on a cluster of Intel Xeon Platinum 8468  (Sapphire Rapids) CPUs. The computation for order $12$ and no restriction on the girth took approximately 43 hours for the generation and 1\,312 hours for computing the fault costs. For higher girths the fault cost computation is the bottleneck.
} are summarised in Table~\ref{tab:counts_2-conn}. 
\begin{table}[!htb]
    \centering
    \setlength\tabcolsep{3.8pt} 
    \begin{tabular}{c|rrrrrrrrrrr}
         $n \backslash \varphi$ & 0 & 1 & 2 & 3 & 4 & 5 & 6 & 7 & 8 & 9 &10  \\\hline
         3 & 0&0&1&0&0&0&0&0&0&0&0 \\
         4 & 1&0&2&0&0&0&0&0&0&0&0 \\
         5 & 3&0&7&0&0&0&0&0&0&0&0 \\
         6 & 13&0&43&0&0&0&0&0&0&0&0 \\
         7 & 116&0&341&0&11&0&0&0&0&0&0 \\
         8 & 2\,009&0&5\,016&6&92&0&0&0&0&0&0 \\
         9 & 72\,529&0&119\,730&130&1\,677&0&0&0&0&0&0 \\
         10& 4\,784\,268&0&4\,926\,191&3\,930&29\,141&0&12&0&0&0&0 \\
         11& 554\,267\,470&0&345\,785\,155&133\,243&783\,029&47&137&10&0&0&0\\
         12& 111\,383\,671\,391&0&42\,204\,241\,063&6\,480\,547&25\,933\,650&2\,112&4\,98&184&0&0&0
\\\hline
         13&258\,028&0&8\,890\,460&62\,453&980\,464&425&1\,114&114&3&0&0\\
         14& 11\,388\,066&0&233\,751\,383&1\,326\,967&16\,543\,622&7\,023&17\,890&1\,236&13&2&0\\

         \hline
         15& 62&0&13\,7330&8\,265&155\,461&267&752&46&2&0&0\\
         16& 984&2&1\,508\,210&87\,456&1\,468\,221&2\,481&5\,584&323&6&0&0\\
         17& 16\,590&0&19\,026\,942&1\,089\,274&15\,726\,242&24\,957&47\,298&2\,508&33&5&2\\
         18& 327\,612&3&274\,100\,472&15\,204\,227&189\,369\,374&285\,906&459\,165&21\,688&241&61&12\\
    \end{tabular}
    \caption{Counts of how many $2$-connected graphs attain each fault cost $\varphi$ for each order $n$. The top part of the table gives counts for all $2$-connected graphs, the middle part for $2$-connected graphs of girth at least $4$, and the bottom part for $2$-connected graphs of girth at least $5$. 
    Fault costs for which the count is zero or which are not included in the table imply that no graphs of the given orders attain this fault cost, for example no graphs of order at most $12$ have a fault cost higher than $7$.
    } 
    \label{tab:counts_2-conn}
\end{table}


As can be seen from Table~\ref{tab:counts_2-conn}, graphs with fault cost~$1$ appear to be significantly rarer when compared to graphs with other low fault costs. Thus, after giving certain more general results, we will focus on structurally investigating fault cost 1 graphs in Section~\ref{subsec_fc1}. In that section we present a 14-vertex graph with fault cost~1 (originally reported in the extended abstract~\cite{GRWZ23}), see Fig.~\ref{fig:tfc1_14}. Combining this with the computational results given in Table~\ref{tab:counts_2-conn}, we obtain the following proposition, wherein $\varphi_k$ shall be the order of the smallest graph with fault cost $k$.




\begin{prop}\label{prop:min_order_for_fc}
    We have $\varphi_0 = 4, \varphi_1 \in \{ 13, 14 \}, \varphi_2 = 3, \varphi_3 = 8, \varphi_4 = 7, \varphi_5 = 11, \varphi_6 = 10, \varphi_7 = 11$, and $\varphi_8 = 13$. 
\end{prop}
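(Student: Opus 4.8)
The plan is to establish the value of each $\varphi_k$ by combining two ingredients: (a) an \emph{upper bound}, namely an explicit graph of the claimed order with fault cost exactly $k$, and (b) a \emph{lower bound}, namely the statement that no $2$-connected graph of smaller order has fault cost $k$. For ingredient (b) I would invoke Table~\ref{tab:counts_2-conn}, which records, for every order $n$ up to $14$ (and higher for larger girth), exactly how many $2$-connected graphs attain each fault cost; since our standing assumption is that graphs are $2$-connected, the exhaustive generation via \texttt{geng} together with Algorithm~\ref{alg:fc} certifies that the first nonzero entry in each column $\varphi = k$ occurs at the claimed order. Concretely: column $\varphi=2$ first has a nonzero entry at $n=3$, so $\varphi_2 = 3$; column $\varphi=4$ at $n=7$, so $\varphi_4 = 7$; column $\varphi=3$ at $n=8$, so $\varphi_3 = 8$; column $\varphi=6$ at $n=10$, so $\varphi_6 = 10$; columns $\varphi=5$ and $\varphi=7$ at $n=11$, so $\varphi_5 = \varphi_7 = 11$; and column $\varphi=8$ at $n=13$, so $\varphi_8 = 13$. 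The value $\varphi_0 = 4$ follows from Proposition~\ref{fc0} (fault cost $0$ is equivalent to $1$-hamiltonicity) together with the observation that $K_4$ is the smallest $2$-connected $1$-hamiltonian graph, while no $2$-connected graph on $3$ vertices (i.e.\ $K_3$) is $1$-hamiltonian.

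For the $\varphi_1$ case, the situation is genuinely different and this is where the proposition's statement weakens to $\varphi_1 \in \{13,14\}$. Table~\ref{tab:counts_2-conn} shows that there is \emph{no} $2$-connected graph of order at most $12$ with fault cost $1$ (every entry in the $\varphi=1$ column is $0$ for $n \le 12$), which gives the lower bound $\varphi_1 \ge 13$. For the upper bound I would exhibit the $14$-vertex graph of Fig.~\ref{fig:tfc1_14} and verify (or cite the verification of) that its fault cost is exactly $1$; this yields $\varphi_1 \le 14$. The gap arises precisely because the exhaustive computation for \emph{all} $2$-connected graphs of order $13$ was not completed without a girth restriction---Table~\ref{tab:counts_2-conn} gives data for order $13$ only under girth $\ge 4$ and girth $\ge 5$, and those partial searches did not settle whether a fault-cost-$1$ graph of order $13$ with girth $3$ exists. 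Hence we can only assert $\varphi_1 \in \{13, 14\}$.

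The main obstacle is therefore not in the logic of the argument---which is essentially bookkeeping against Table~\ref{tab:counts_2-conn} plus the two elementary facts about $\varphi_0$---but in making sure the upper-bound graphs are correctly identified and their fault costs verified. For each $k \in \{2,3,4,5,6,7,8\}$ one must point to a concrete graph of order $\varphi_k$ realising fault cost $k$; these come from the generation runs themselves, so the verification is a direct application of the (tested, open-source) implementation of Algorithm~\ref{alg:fc}. The only case requiring a hand argument rather than a table lookup is $\varphi_0 = 4$, where one needs that $K_4$ is $1$-hamiltonian (immediate: deleting any vertex leaves $K_3$, which is hamiltonian, and $K_4$ itself is hamiltonian) and that $K_3$, the unique $2$-connected graph on $3$ vertices, is not $1$-hamiltonian (deleting a vertex leaves $K_2$, which is not hamiltonian), so by Proposition~\ref{fc0} it has fault cost $2$, consistent with the table.
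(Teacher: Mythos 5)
Your proposal is correct and takes essentially the same route as the paper, which likewise proves the proposition by combining the exhaustive counts of Table~\ref{tab:counts_2-conn} (lower bounds / witnesses) with the explicit $14$-vertex graph of Fig.~\ref{fig:tfc1_14} for $\varphi_1 \le 14$ and the characterisation of fault cost $0$ via Proposition~\ref{fc0}. The only imprecision is your opening claim that the table gives complete counts for all $2$-connected graphs up to order $14$ — the unrestricted enumeration stops at order $12$, with orders $13$–$14$ covered only for girth at least $4$ — but this does not affect any conclusion, since $\varphi_8 = 13$ needs only completeness through order $12$ plus an order-$13$ witness (present in the girth-$\ge 4$ row), and you treat the $\varphi_1$ case with exactly the right caveat.
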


An example of a most symmetric graph attaining each $\varphi_k$ can be found in the Appendix~\ref{app:hog}. All of the most symmetric graphs attaining each $\varphi$ as well as the smallest graphs\footnote{The two graphs of order $16$, girth $5$ and fault cost~$1$ can be inspected on the House of Graphs~\cite{CDG23} at \url{https://houseofgraphs.org/graphs/53049} and \url{https://houseofgraphs.org/graphs/53050}.} of girth $5$ with fault cost $1$ can also be searched on the House of Graphs~\cite{CDG23} by using the keywords ``fault cost''.


Most of the families we deal with in the sequel are of connectivity~$2$. 
It is natural to ask whether for the same orders and fault-costs also $3$-connected graphs appear. By filtering the graphs obtained from \texttt{geng} for $3$-connectivity using the program \texttt{pickg} from the \texttt{nauty}~\cite{MP14} package, we can give a similar table for the fault costs of $3$-connected graphs. See Table~\ref{tab:counts_3-conn} in Appendix~\ref{app:counts_3-conn}.

In order to obtain more examples at higher orders, we restrict our search to cubic graphs. We generated $2$-connected cubic graphs using \texttt{snarkhunter}~\cite{BGM11} and then determined the fault cost of each generated graph using our program. The results are summarised in Table~\ref{tab:counts_cubic}.


As in the general case, it seems that also in the cubic case graphs with odd fault cost are a bit rarer than the graphs with even fault cost. In particular, the smallest cubic graphs with odd fault cost have order $22$.\footnote{The most symmetric cubic graph of order $22$ with fault cost $3$ can be inspected on the House of Graphs~\cite{CDG23} at \url{https://houseofgraphs.org/graphs/53072}.} 

\begin{table}[!htb]
    \centering
    \begin{tabular}{c|rrrrr}
         $n \backslash \varphi$ & 0 & 1 & 2 & 3 & 4 \\\hline
         4& 1&0&0&0&0\\
         6& 1&0&1&0&0\\
         8& 2&0&3&0&0\\
         10& 6&0&12&0&0\\
         12& 27&0&54&0&0\\
         14& 158&0&322&0&0\\
         16& 1\,396&0&2\,478&0&0\\
         18& 16\,067&0&23\,796&0&3\\
         20& 227\,733&0&270\,061&0&24\\
         22& 3\,740\,294&0&3\,447\,110&14&209\\
         24& 68\,237\,410&0&48\,110\,143&224&1\,858\\
         26& 1\,346\,345\,025&0&726\,174\,160&3\,326&17\,841\\
         \hline
         28& 7\,352\,343\,711&0&1\,185\,463\,316&384&2\,956\\
         \hline
         30& 14\,468\,621\,439&0&153\,224\,637&0&0\\

    \end{tabular}
    \caption{Counts of fault costs for $2$-connected cubic graphs. Fault costs for which the count is zero or which are not included in the table imply that no graphs of the given orders attain this fault cost. The top part
    of the table gives counts for all $2$-connected cubic graphs, the middle part for $2$-connected cubic graphs of girth at
    least $4$, and the bottom part for $2$-connected cubic graphs of girth at least $5$.
    }
    \label{tab:counts_cubic}
\end{table}

As \texttt{snarkhunter} allows to restrict the generation to $3$-connected graphs, we also determined their fault costs. The results are summarised in Table~\ref{tab:counts_3-conn_cubic} in Appendix~\ref{app:counts_3-conn_cubic}.

For planar graphs the most efficient way of generation depends on the connectivity of the graphs. We generated $2$-connected planar graphs by generating all $2$-connected graphs via \texttt{geng} and then filtered the planar ones using \texttt{planarg} which is also part of the \texttt{nauty}~library~\cite{MP14}. We then used our program to determine the fault costs of these $2$-connected planar graphs. The results are summarised in Table~\ref{tab:counts_2-conn_planar}. For $3$-connected planar graphs, generation can be done much more efficiently by using \texttt{plantri}~\cite{BM07}. We also applied our program to these graphs and the results are summarised in Table~\ref{tab:counts_3-conn_planar} of Appendix~\ref{app:counts_3-conn_planar}. Both in the cubic and in the planar case, the restriction to $3$-connected graphs seems to restrict the fault costs, since, from the results in the appendix, it can be observed that for $3$-connected cubic or planar graphs only fault costs $0$ and $2$ appear for small orders. 

\begin{table}[!htb]
    \centering
    \begin{tabular}{c|rrrrrrrr}
        $n \backslash \varphi$ & 0 & 1 & 2 & 3 & 4 & 5 & 6 & 7\\\hline
        3& 0&0&1&0&0&0&0&0\\
        4& 1&0&2&0&0&0&0&0\\
        5& 2&0&7&0&0&0&0&0\\
        6& 7&0&37&0&0&0&0&0\\
        7& 34&0&249&0&11&0&0&0\\
        8& 246&0&2\,549&6&92&0&0&0\\
        9& 2\,526&0&32\,396&119&1\,455&0&0&0\\	
        10& 30\,842&0&489\,870&2\,682&22\,402&0&12&0\\
        11& 416\,108&0&8\,138\,729&59\,733&414\,982&38&137&10\\
        12& 5\,955\,716&0&143\,994\,243&1\,385\,013&8\,223\,342&1\,327&3\,734&184\\
    \end{tabular}
    \caption{Counts of fault costs for $2$-connected planar graphs. Fault costs for which the count is zero or which are not included in the table imply that no graphs of the given orders attain this fault cost.
    }
    \label{tab:counts_2-conn_planar}
\end{table}



\subsection{Large fault cost}

In Proposition~\ref{fc0} we characterised graphs with fault cost 0, establishing that these are exactly the 1-leaf-guaranteed graphs. Although we cannot give a characterisation, examples of graphs with fault cost 2 are easy to describe: hamiltonian, but not $1$-leaf-guaranteed graphs are clearly of this type. However, they are not the only ones, e.g.\ $K_{2,3}$ also has fault cost 2, along with various other non-hamiltonian graphs. Deciding whether graphs with some given fault cost $k$ exist is harder, even---or we might say especially---for $k=1$. Therefore, in the sequel we shall focus on a structural description of fault cost 1 graphs. But before doing so, we give a general result relating the minimum leaf number of a graph to the minimum leaf numbers of the graph's vertex-deleted subgraphs, and prove that for every non-negative integer $k$ there exists a graph with fault cost $k$.



\begin{prop}
Let $G$ be a $2$-connected graph. Then $${\rm ml}(G) - 1 \le {\rm ml}(G - v) \le {\rm ml}(G) + \Delta(G)$$ for all $v \in V(G)$. Both bounds are best possible.
\end{prop}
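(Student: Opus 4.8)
The plan is to establish the lower bound by a short contradiction argument, the upper bound by an explicit construction of a spanning tree of $G-v$ from a given ml-subgraph of $G$, and then to supply small examples attaining equality in each case.

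\textbf{Lower bound.} Suppose $\mathrm{ml}(G-v) \le \mathrm{ml}(G) - 2$. If $G-v$ were hamiltonian we could splice $v$ (which has at least two neighbours on the hamiltonian cycle of $G-v$, since $G$ is $2$-connected) into the cycle to obtain a hamiltonian path of $G$, giving $\mathrm{ml}(G) \le 2$ and hence $\mathrm{ml}(G-v) \le \mathrm{ml}(G)-2 \le 0$, impossible. So $G-v$ has an ml-subgraph $T$ which is a tree with at most $\mathrm{ml}(G)-2$ leaves; picking any neighbour $w$ of $v$ in $G$ and adding the edge $vw$ to $T$ yields a spanning tree of $G$ with at most $(\mathrm{ml}(G)-2)+1 = \mathrm{ml}(G)-1$ leaves, contradicting the definition of $\mathrm{ml}(G)$. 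This is essentially the argument already run in the proof of Proposition~\ref{prop:lgg}(ii), so it can be cited or repeated briefly.

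\textbf{Upper bound.} Let $S$ be an ml-subgraph of $G$ with $\mathrm{ml}(G)$ leaves. If $S$ is a hamiltonian cycle, deleting $v$ leaves a hamiltonian path of $G-v$, so $\mathrm{ml}(G-v) \le 2 \le \mathrm{ml}(G)+\Delta(G)$ trivially. Otherwise $S$ is a tree; form $S - v$. If $v$ is a leaf of $S$, then $S-v$ is still a spanning tree of $G-v$ with at most $\mathrm{ml}(G)$ leaves, and we are done. If $v$ has degree $d \ge 2$ in $S$, then $S-v$ splits into $d$ subtrees $T_1,\dots,T_d$; reconnect them by adding $d-1$ edges of $G-v$ — this is possible because $G-v$ is connected (as $G$ is $2$-connected) — to form a spanning tree $T'$ of $G-v$. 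Each added edge creates at most two new leaves (its two endpoints), and no leaf of $S$ other than possibly $v$ is affected by passing to $S-v$ except that the former neighbours of $v$ lose degree. A careful count: $S-v$ has at most $\mathrm{ml}(G) + d$ leaves (the original leaves, minus $v$ if it was one, plus up to $d$ neighbours of $v$ that may have dropped to degree~$1$), and each of the $d-1$ re-adding steps can only decrease or keep the leaf count when the new edge is attached to existing leaves and does not increase it by more than the two endpoints in the worst case; arranging the reconnection greedily (always joining two current components via an edge incident to a current leaf when possible) one gets a spanning tree with at most $\mathrm{ml}(G) + d \le \mathrm{ml}(G)+\Delta(G)$ leaves. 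The cleanest way to phrase this is: take a hamiltonian path or minimum-leaf tree structure and bound crudely, since we only need $+\Delta(G)$, not a tight per-step count.

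\textbf{Sharpness.} For the lower bound, any leaf-critical graph (e.g.\ a hypohamiltonian graph, which lies in $\mathcal{L}_2^1$) has $\mathrm{ml}(G-v) = \mathrm{ml}(G)-1$ for every $v$; for instance the Petersen graph gives $\mathrm{ml}(P)=2$ and $\mathrm{ml}(P-v)=1$. For the upper bound, one wants a vertex $v$ of degree $\Delta(G)$ whose removal disconnects $S$ into $\Delta(G)$ pieces that are expensive to reconnect; a natural candidate is a "spider"-like or $K_{1,m}$-augmented graph — concretely, take $K_{2,m}$ or a graph consisting of a high-degree hub $v$ whose removal forces many new leaves while $\mathrm{ml}(G)$ stays small (e.g.\ a graph where $G$ itself is traceable through $v$ so $\mathrm{ml}(G)$ is small, but $G-v$ falls apart into many components each contributing a leaf). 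The main obstacle is pinning down this extremal example for the upper bound: one must exhibit a single graph where $\mathrm{ml}(G-v)$ genuinely reaches $\mathrm{ml}(G)+\Delta(G)$ rather than merely being bounded by it, which requires simultaneously controlling $\mathrm{ml}(G)$ from above and $\mathrm{ml}(G-v)$ from below, and the latter bound typically needs an independent-set or component-counting lower bound on $\mathrm{ml}$ (cf.\ the independence-number remark in the introduction). I would look for the example among graphs built from a central vertex joined to the "special" vertices of several disjoint non-traceable gadgets.
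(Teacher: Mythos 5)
Your proofs of the two inequalities are essentially the paper's own arguments: the lower bound via adding one edge from $v$ to a spanning tree of $G-v$ with at most ${\rm ml}(G)-2$ leaves (after disposing of the hamiltonian cases), and the upper bound by deleting $v$ from a minimum leaf spanning tree and reconnecting the at most $\Delta(G)$ resulting subtrees inside the connected graph $G-v$. Your per-step leaf bookkeeping is shakier than it needs to be: adding an edge between two components only raises degrees, so every leaf of the final tree was already a leaf of $S$ or a former $S$-neighbour of $v$, giving at most ${\rm ml}(G)+\Delta(G)$ leaves outright, with no greedy choice or ``two new leaves per edge'' worry needed; this is exactly how the paper argues, so that part is a cosmetic fix. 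Your sharpness example for the lower bound (any leaf-critical graph, e.g.\ the Petersen graph or a hypotraceable graph) also matches the paper.

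The genuine gap is the sharpness of the upper bound. The proposition asserts that ${\rm ml}(G-v)={\rm ml}(G)+\Delta(G)$ is actually attained, and you explicitly stop short of exhibiting a graph that does so; an acknowledged ``I would look among such-and-such graphs'' does not discharge that part of the claim. Moreover, your one concrete candidate fails: for $K_{2,m}$ one has ${\rm ml}(K_{2,m})=m-1$ and $\Delta=m$, while deleting a degree-$m$ vertex leaves the star $K_{1,m}$ with ${\rm ml}=m$, and deleting a degree-$2$ vertex leaves $K_{2,m-1}$ with ${\rm ml}=m-2$; so the maximum over $v$ is $m$, far below $2m-1$. The paper settles this with an explicit infinite family (Fig.~\ref{fault-cost_k}): a $2$-connected graph $G$ containing a vertex $v$ with $\deg(v)=\Delta(G)={\rm ml}(G)=k\ge 3$, built so that each of the $k$ pieces hanging off $v$ contributes a single leaf to an ml-subgraph of $G$ but forces two leaves in every spanning tree of $G-v$, whence ${\rm ml}(G-v)=2k={\rm ml}(G)+\Delta(G)$. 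Your closing heuristic (a hub joined to several gadgets) is the right direction, but making it work requires simultaneously certifying the upper bound ${\rm ml}(G)\le k$ and the lower bound ${\rm ml}(G-v)\ge 2k$ (the latter by a component- or leaf-forcing count in each gadget), and that construction plus verification is precisely the missing content.
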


\noindent \emph{Proof.} If $G$ is hamiltonian we have ${\rm ml}(G) = 1$ and so ${\rm ml}(G - v) \in \{ 1, 2 \}$ for all $v \in V(G)$. So the above inequalities trivially hold. Henceforth, we assume $G$ to be non-hamiltonian, i.e.\ ${\rm ml}(G) \ge 2$. 

Put $k := {\rm ml}(G)$. If there exists a $v \in V(G)$ such that $G - v$ is hamiltonian, i.e.\ ${\rm ml}(G - v) = 1$, then $G$ is traceable and so ${\rm ml}(G) = 2$ since we are assuming that $G$ is non-hamiltonian. Yet again the inequalities hold, so we may suppose that $G - v$ is non-hamiltonian for any $v \in V(G)$. Assume there exists a vertex $v \in V(G)$ such that $G - v$ contains a spanning tree $T$ with at most $k-2$ leaves. We add $v$ and an edge incident to $v$ to $T$ and obtain a spanning tree of $G$ with at most $k - 1$ leaves, a contradiction since ${\rm ml}(G) = k$. We have proven the lower bound and it is easy to produce examples exhibiting this bound, e.g.\ a hypotraceable graph.

Now we prove the upper bound. Let $T'$ be an ml-subgraph of $G$. Note that $T'$ is a tree since we are assuming $G$ to be non-hamiltonian. Consider $v' \in V(G)$ with $\deg(v') = \Delta(G) > 1$. The disconnected graph $T' - v'$ is a forest consisting of pairwise disjoint trees $T_1, \ldots, T_{\Delta(G)}$. Since $G$ is 2-connected, between any two trees $T_i$ and $T_j$, $i \ne j$, there exists in $G - v'$ a path $P$ between a vertex in $T_i$ and a vertex in $T_j$ such that $T_i \cup P \cup T_j$ is a tree. In an analogous manner we connect all trees $T_1, \ldots, T_{\Delta(G)}$ until a spanning tree $T''$ of $G - v'$ is obtained. As the tree $T'$ has ${\rm ml}(G)$ leaves, the forest $T' - v'$ has at most ${\rm ml}(G) + \Delta(G)$ leaves. When constructing $T''$ in each step we add a path between two trees, so in each step the total number of leaves cannot increase. Thus $T''$ also has at most ${\rm ml}(G) + \Delta(G)$ leaves. See Fig.~\ref{fault-cost_k} for a graph $G$---indeed, an infinite family of graphs---showing that, in general, this bound cannot be improved: it is not difficult to check that ${\rm ml}(G) = \Delta(G)$ and ${\rm ml}(G - v) = 2\Delta(G)$ for $v$ as defined in Fig.~\ref{fault-cost_k}. \hfill $\Box$

\begin{figure}[!htb]
\begin{center}
\includegraphics[height=44mm]{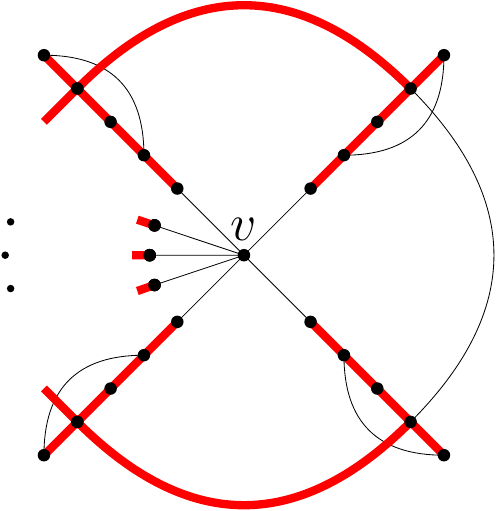} 
\caption{
A 2-connected graph $G$ with maximum degree and minimum leaf number $k := \deg(v) \ge 3$. In $G - v$, any spanning tree has at least $2k$ leaves. An ml-subgraph of $G - v$ with exactly $2k$ leaves is emphasised. 
}\label{fault-cost_k}
\end{center}
\end{figure}

\begin{figure}
    \centering
    \begin{tikzpicture}[ fo/.style={draw, circle, fill=black, minimum size={0.15cm}, inner sep=0cm, scale=0.65, label={[label distance=-3pt]#1\strut}},scale=0.5] \node[fo=left:$u$] (u) at (-1,0) {};
        \node[fo=right:$v$] (v) at (5,0) {};

        \node[fo=above:$a_0$] (a0) at (1,3.5) {};
        \node[fo=below:$a_1$] (a1) at (1,2.5) {};
        \node[fo=below:$a_2$] (a2) at (1,1) {};
        \node[fo=below:$a_3$] (a3) at (1,-1) {};
        \node[fo=below:$a_{m-1}$] (am) at (1,-3.5) {};
        
        \node[fo=above:$b_0$] (b0) at (3,3.5) {};
        \node[fo=below:$b_1$] (b1) at (3,2.5) {};
        \node[fo=below:$b_2$] (b2) at (3,1) {};
        \node[fo=below:$b_3$] (b3) at (3,-1) {};
        \node[fo=below:$b_{m-1}$] (bm) at (3,-3.5) {};

        \node at (2, -2) {\vdots};

        \draw (u) to[bend left] (a0) -- (b0) to[bend left] (v);
        \draw (u) to[bend left] (a1) -- (b1) to[bend left] (v);
        \draw (u) to[bend left] (a2) -- (b2) to[bend left] (v);
        \draw (u) to[bend right] (a3) -- (b3) to[bend right] (v);
        \draw (u) to[bend right] (am) -- (bm) to[bend right] (v);
    \end{tikzpicture}
    \qquad
        \begin{tikzpicture}[ fo/.style={draw, circle, fill=black, minimum size={0.15cm}, inner sep=0cm, scale=0.65, label={[label distance=-3pt]#1\strut}},scale=0.5]
        \node[fo=left:$u$] (u) at (-1,0) {};
        \node[fo=right:$v$] (v) at (5,0) {};

        \node[fo=above:$a_0$] (a0) at (1,3.5) {};
        \node[fo=below:$a_1$] (a1) at (1,2.5) {};
        \node[fo=below:$a_2$] (a2) at (1,1) {};
        \node[fo=below:$a_3$] (a3) at (1,-1) {};
        \node[fo=below:$a_{m-1}$] (am) at (1,-3.5) {};
        
        \node[fo=above:$b_0$] (b0) at (3,3.5) {};
        \node[fo=below:$b_1$] (b1) at (3,2.5) {};
        \node[fo=below right:$b_2$] (b2) at (3,1) {};
        \node[fo=below:$b_3$] (b3) at (3,-1) {};
        \node[fo=below:$b_{m-1}$] (bm) at (3,-3.5) {};

        \node at (2, -2) {\vdots};

        \draw (u) to[bend left] (a0) -- (b0) to[bend left] (v);
        \draw (u) to[bend left] (a1) -- (b1) to[bend left] (v);
        \draw (u) to[bend left] (a2) -- (b2) to[bend left] (v);
        \draw (u) to[bend right] (a3) -- (b3) to[bend right] (v);
        \draw (u) to[bend right] (am) -- (bm) to[bend right] (v);
        \draw (a0) -- (a1);
        \draw (b2) -- (b3);
    \end{tikzpicture}
    \caption{Left-hand side (a): The graph $G_m$, $m\geq 3$, with fault cost $2\lfloor m/2 \rfloor + 2$. Right-hand side (b): The graph $H_m$, $m\geq 5$ with fault cost $2\lfloor m/2 \rfloor - 1$.}
    \label{fig:Gm_and_Hm}
\end{figure}

\begin{thm}
For any non-negative integer $k$ there exists a graph with fault cost exactly $k$.
\end{thm}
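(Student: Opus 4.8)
The plan is to realise every non-negative integer $k$ as a fault cost, handling small values by the explicit graphs and computational data already established, and large values by exhibiting two infinite families whose fault costs cover all sufficiently large integers of both parities. First I would dispose of the small cases: by Proposition~\ref{prop:min_order_for_fc} we already have graphs of fault cost $0,2,3,4,5,6,7,8$, and fault cost $1$ is witnessed by the $14$-vertex graph of Fig.~\ref{fig:tfc1_14}. So it suffices to produce, for each integer $k$ large enough, \emph{some} graph of fault cost exactly $k$; combining the two families below with the finite list then covers all of $\mathbb{Z}_{\ge 0}$.

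Next I would introduce the family $G_m$ ($m \ge 3$) from the left-hand side of Fig.~\ref{fig:Gm_and_Hm}: two hub vertices $u,v$ joined by $m$ internally disjoint paths $u\,a_i\,b_i\,v$. The key computations are (a) ${\rm ml}(G_m)$ and the degree sequences of its ml-subgraphs, and (b) for the worst vertex $v$ (a hub), ${\rm ml}(G_m - v)$ and the optimal transition cost. One shows ${\rm ml}(G_m) = m$: any spanning tree must leave at least one of $a_i,b_i$ as a leaf on all but at most one of the $m$ ``rungs'' (since $G_m$ is not hamiltonian for $m\ge 3$ and the paths are pendant-like), and conversely a spanning star-like tree through $u$ (or $v$) attains $m$; the ml-subgraphs essentially all have one hub as a high-degree branch and one leaf per rung except one, up to a small number of degree patterns. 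Deleting a hub, say $v$, leaves $u$ joined to $m$ disjoint paths $a_i b_i$, forcing roughly $2\lfloor m/2\rfloor$ leaves after an optimal pairing of rungs; then $\tau$ between a best ml-subgraph of $G_m$ and a best ml-subgraph of $G_m-v$ works out to $2\lfloor m/2\rfloor + 2$, and one checks the other (degree-$2$) vertices do not give a larger value, so $\varphi(G_m) = 2\lfloor m/2\rfloor + 2$. As $m$ ranges over $\{3,4,5,\dots\}$, $2\lfloor m/2\rfloor + 2$ takes the values $4,4,6,6,8,8,\dots$, i.e.\ every even integer $\ge 4$.

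Then I would treat the modified family $H_m$ ($m \ge 5$) on the right-hand side of Fig.~\ref{fig:Gm_and_Hm}, obtained from $G_m$ by adding the two edges $a_0a_1$ and $b_2b_3$. These two chords break the parity in the leaf count: a similar but slightly more delicate analysis gives ${\rm ml}(H_m)$ and, for the worst vertex, an optimal transition cost of $2\lfloor m/2\rfloor - 1$, which is odd; as $m$ ranges over $\{5,6,7,\dots\}$ this yields $3,5,5,7,7,\dots$, i.e.\ every odd integer $\ge 3$. (One must check the chords do not reduce ${\rm ml}(H_m)$ below the value needed, and that no vertex other than the intended worst one yields a higher minimised transition cost; this is where a careful case analysis of which rung-vertices are forced to be leaves is required.) Together with the finite list from Proposition~\ref{prop:min_order_for_fc} and the fault cost $1$ example, the two families exhaust all non-negative integers, completing the proof.

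\textbf{Main obstacle.} The routine part is bounding ${\rm ml}$ from below (one leaf per rung, minus a bounded correction) and from above (exhibit an explicit spanning tree). The delicate part is pinning down \emph{exactly} which degree sequences occur among ml-subgraphs of $G_m$ and $G_m-v$ (respectively $H_m$, $H_m-v$), because $\varphi$ is a min–max–min over these sequences: I must verify that the hub vertex really is the argmax over $v$, that the optimal ml-subgraph of the whole graph cannot be chosen to ``pre-adapt'' to the deletion and thereby lower the transition cost, and that the two added chords in $H_m$ shift the count by exactly one rather than by zero or two. Getting these parity bookkeeping details right — especially confirming that degree-$2$ vertices contribute no extra to $\tau$ and that the floor function appears with the stated constant — is the crux.
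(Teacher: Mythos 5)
Your strategy is the same as the paper's: dispose of small $k$ via Proposition~\ref{prop:min_order_for_fc} and the fault-cost-$1$ graph of Fig.~\ref{fig:tfc1_14}, get even $k\ge 4$ from $G_m$ and odd $k\ge 3$ from $H_m$ of Fig.~\ref{fig:Gm_and_Hm}, and your target formulas $2\lfloor m/2\rfloor+2$ and $2\lfloor m/2\rfloor-1$ are the correct ones. However, the supporting computations you commit to are wrong, and the one step that actually produces these formulas is exactly the step you defer. Concretely: ${\rm ml}(G_m)=m-1$, not $m$ --- an ml-subgraph keeps exactly one rung $a_ib_i$ with both vertices internal and one leaf on each of the remaining $m-1$ rungs, using \emph{both} hubs as branch vertices, whereas your ``star-like tree through $u$'' has $m$ leaves and is not optimal. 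Also, $G_m-v$ (a hub deleted) is itself a tree, a spider with $m$ legs of length two, so its unique spanning tree has exactly $m$ leaves; there is no ``optimal pairing of rungs'', and $2\lfloor m/2\rfloor$ is off by one for odd $m$.

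The real content of the proof is a min--max balancing over the choice of ml-subgraph $T$ of $G_m$, which your sketch never identifies. Writing $\alpha$ and $\beta$ for the numbers of leaves of $T$ among the $a_i$'s and $b_i$'s (so $\alpha+\beta=m-1$), deleting $u$ forces transition cost $2+2\beta$ and deleting $v$ forces $2+2\alpha$ (there is no freedom after a hub deletion, since what remains is a tree), while every other vertex admits an ml-subgraph of the deleted graph at cost at most $3$; hence $\varphi_T(G_m)=\max\{2+2\alpha,\,2+2\beta\}$, minimised by splitting the $m-1$ leaves as evenly as possible between the two sides, which is what yields $m+1$ ($m$ odd) and $m+2$ ($m$ even). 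The analogous step for $H_m$ is harder still: one must first show every ml-subgraph is forced (up to symmetry) to contain the paths $vb_0a_0a_1b_1$ and $ua_2b_2b_3a_3$ plus one further all-internal rung, and then run the same balancing with $m-3$ leaves. You explicitly label all of this ``the crux'' and leave it open, so the formulas are asserted rather than proved; and since your stated values of ${\rm ml}(G_m)$ and of the leaf count of $G_m-v$ are incorrect, the computation as sketched would not reproduce them. As it stands the proposal is a correct outline of the paper's route with the decisive argument missing.
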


\begin{proof}
    We first describe a family of graphs with even fault cost $k\geq 4$. We handle the small and odd fault costs later. The family can be obtained by taking the multigraph on two vertices and $m$ edges between these two vertices, and subdividing each edge twice. See Fig.~\ref{fig:Gm_and_Hm}(a). More formally, let $m\geq 3$ and let $G_m$ be the graph with vertex set $V:= \{u,v,a_0,\ldots,a_{m-1}, b_0,\ldots, b_{m-1}\}$ and edge set $E:= \{ua_i, vb_i, a_ib_i\}_{i=0}^{m-1}$. We now show that $G_m$ has fault cost $m+2$ for even $m$ and fault cost $m+1$ for odd $m$. In particular, the odd case gives us a graph with fault cost $4$ when $m = 3$. 

    Let $T$ be an ml-subgraph of $G_m$. We recall that for a vertex $v$ in $T$, its \textit{$T$-degree} is the degree $v$ has in $T$. Since $T$ is connected, there is a pair $a_i,b_i$ with $i\in\{0,\ldots, m-1\}$ which both have $T$-degree $2$. At most one such pair can exist as trees are acyclic; and since $T$ is connected the remaining pairs $a_i, b_i$ have one vertex of $T$-degree $1$ and one vertex of $T$-degree $2$. The number of leaves is minimised when $u$ and $v$ are not leaves in $T$.
    Hence, $T$ has $m-1$ leaves. We define $\alpha:= \lvert L(T)\cap \{a_0,\ldots, a_{m-1}\}\rvert$ and $\beta := \lvert L(T)\cap \{b_0,\ldots, b_{m-1}\}\rvert$. Without loss of generality, we assume that $a_0$ and $b_0$ are both of $T$-degree $2$.
    We will now look at the ml-subgraphs of the vertex-deleted subgraph $G_m-x$ of $G_m$, where $x \in V(G_m)$.

    Suppose that $x = u$. Then $G_m-x$ is a tree with $m$ leaves and hence this tree $T_x$ is the only ml-subgraph $G_m-x$. We have that $\tau(T,T_x) = 2+2\beta$ as $a_0$
    and $v$ change degrees (since $\alpha \geq 1$) and for every pair $a_i,b_i$ where $b_i$ is a leaf in $T$, the degrees change.
    Similarly, when $x = v$, $\tau(T, T_x) = 2+2\alpha$. Note that $2+2\alpha\geq 4$ and $2+2\beta \geq 4$ for any $m\geq 3$. 
    
    For any other vertex $x$ of $G_m$, $\tau(T, T_x) \leq 3$ for an ml-subgraph $T_x$ of $G - x$ which minimises this value.
    Indeed, let $x = a_0$, then an ml-subgraph has $m-1$ leaves. One can be obtained from $T$ by changing the degrees of $b_0$, $u$ and $a_i$, where $a_i$ was a leaf of $T$. Similarly, when $x = b_0$, one can be obtained from $T$ using only three changes. When $x$ is a leaf of $T$, an ml-subgraph exists with only one change and when $x$ is a vertex of degree $2$, not $a_0$ or $b_0$, an ml-subgraph exists with three changes ($u, v$ and the remaining neighbour of $x$ in $T$).

    Therefore, $\varphi_T(G_m) = \max \{ 2+2\alpha, 2+2\beta \}$, which is minimised over $T$ when $\alpha = \lfloor (m-1)/2\rfloor$ and $\beta=\lceil (m-1)/2\rceil$ or vice versa. Hence, $\varphi(G_m) = m+1$ when $m$ is odd and $\varphi(G_m) = m+2$ when $m$ is even. 

    We now handle odd fault costs using the previous family, but with two added edges. See Fig.~\ref{fig:Gm_and_Hm}(b). More formally, let $m\geq 5$ and $H_m:= G_m + a_0a_1+b_2b_3$. Then $H_m$ has fault cost $m - 2$ when $m$ is odd and $m-1$ when $m$ is even, as we now prove.
    
    Any ml-subgraph $T$ of $H_m$ has exactly $m-3$ leaves and contains the path $vb_0a_0a_1b_1$ or $vb_1a_1a_0b_0$, where $a_0$ and $a_1$ have $T$-degree $2$, as well as the path $ua_2b_2b_3a_3$ or $ua_3b_3b_2a_2$, where $b_2$ and $b_3$ have $T$-degree $2$ and $T$ has a pair of vertices $a_i, b_i$, with $i\in\{4,\ldots, m-1\}$, which are both of $T$-degree $2$. Indeed, in a spanning tree $T'$ of $H_m$ the set of vertices $\{ a_i, b_i \}_{i=4}^{m-1}$ contains
    at least $m-5$ leaves of $T'$, and at least $m-4$ leaves when there is no pair $a_i, b_i$, with $i\in\{4,\ldots, m-1\}$, such that both $a_i$ and $b_i$ have $T'$-degree $2$. The set $\{ a_i, b_i \}_{i=0}^3$ contains at least two leaves of $T'$, but would introduce a cycle if $\{ a_i, b_i \}_{i=4}^{m-1}$ contains a pair $a_i, b_i$ such that both $a_i$ and $b_i$ have $T'$-degree $2$, and one of $a_0, a_1, b_2, b_3$ would be of $T'$-degree $3$. Therefore, an ml-subgraph $T$ has the structure described above.
    
    Due to symmetry, we can assume that $T$ contains the paths $vb_0a_0a_1b_1$ and $ua_2b_2b_3a_3$ and that $a_4$ and $b_4$ are both of $T$-degree $2$. We define $\alpha$ and $\beta$ as before.

    We now look at the ml-subgraphs $T_x$ of the vertex-deleted subgraphs $H_m-x$ of $H_m$. Suppose $x = u$; again any ml-subgraph $T_x$ will contain either $vb_0a_0a_1b_1$ or $vb_1a_1a_0b_0$. By our assumption on $T$, the option which minimises $\tau(T, T_x)$ always takes the former path. There are three options left for $T_x$ as $b_2$ and $b_3$ can either both have $T_x$-degree $2$;
    or $b_2$ can have $T_x$-degree $3$ and $b_3$ can have $T_x$-degree $2$;
    or $b_2$ can have $T_x$-degree $2$ and $b_3$ can have $T_x$-degree $3$.
    However, to minimise the transition cost, both $b_2$ and $b_3$ need to be of $T_x$-degree $2$. This gives $\tau(T, T_x) = 3+2(\beta-1)$, since the degree changes in $v, a_2, a_4$ and every pair $a_i, b_i$ with $i\in \{5,\ldots, m-1\}$ in which $b_i$ was a leaf in $T$.
    Due to symmetry, if $x = v$ then the ml-subgraph $T_x$ which minimises the transition cost gives $\tau(T, T_x) = 3+2(\alpha - 1)$.
    Note that when $m\geq 6$, the maximum of these values is at least $4$ and that when $m = 5$, both of them are equal to $3$.
    
    For any other vertex $x$ of $H_m$ we have $\tau(T, T_x)\leq 4$ for some ml-subgraph $T_x$ of $H_m-x$. When $m = 5$, we even have $\tau(T, T_x)\leq 3$. Indeed, let $x = a_0$. Then an ml-subgraph of $H_m - a_0$ has $m-2$ leaves. One can be obtained from $T$ by changing the degrees of $u$ and $b_0$. The same holds for $b_2$. If $x = a_1$, an ml-subgraph has $m-2$ leaves and one can be obtained from $T$ by changing the degrees of $a_0$ and $v$. The same holds for $b_3$. If $x = b_0$, an ml-subgraph has $m-3$ leaves and one can be obtained from $T$ by changing the degrees of $a_0$ and $b_1$. The same holds for $a_2$. If $x$ is a leaf of $T$, then an ml-subgraph of $H_m-x$ can be obtained from $T$ using one degree change in the neighbour of that leaf. If $x = a_4$, then an ml-subgraph of $H_m-x$ has $m-3$ leaves if $m\geq 6$ and $m-2$ leaves if $m = 5$. The former can be solved by attaching a leaf which is not $b_1$ or $a_3$ to $u$ or $v$. This can be done using at most four changes. The latter case can be dealt with in two changes adding the edge $a_0u$. The same holds when $x = b_4$. Finally, if $x$ is any of the remaining vertices, i.e.\ $a_i$ or $b_i$ with $i\in \{5,\ldots, m-1\}$ with $T$-degree $2$, then one can find an ml-subgraph of $H_m - x$ by only changing the degree of $u$ and $v$.

    Therefore, $\varphi_T(H_m) = \max\{3+2(\alpha-1), 3+2(\beta-1)\}$, which is minimised over $T$ when $\alpha = \lfloor (m-3)/2\rfloor$ and $\beta = \lceil (m-3)/2\rceil$ or vice versa. Hence, $\varphi(H_m) = m-2$ when $m$ is odd and $\varphi(H_m) = m-1$ when $m$ is even.

    This proves the statement for graphs with fault cost $k\geq 3$. An example of a graph with fault cost $2$ can be found in Appendix~\ref{app:fc2_example}. A search for fault cost $1$ graphs is handled in the sequel. Graphs with vanishing fault cost exist due to Proposition~\ref{fc0}.
\end{proof}

\subsection{Small fault cost}

As above experiments show, among graphs with small fault cost, graphs with fault cost 0 or 2 are ubiquitous and graphs with fault cost 1 or 3 are rarer. In the next two sections we therefore focus on these two small odd fault cost cases.

\subsubsection{Fault cost 3}

In this section we describe a construction for obtaining graphs of fault cost $3$ and show there exist infinitely many cubic graphs with fault cost $3$.

Let $H$ be a not necessarily 2-connected graph. We say $H$ is a \emph{Type~1} graph if it contains pairwise distinct vertices $v,w,x,y$ such that all of the following hold. 

\begin{enumerate}[label=\normalfont{(\roman*)}]
\item There is no hamiltonian $vw$-path in $H$;

\item There is a $vx$-path $P$ and a $wy$-path $Q$ with $V(P), V(Q)$ partitioning $V(H)$;

\item $H$ has a hamiltonian $v$-path and a hamiltonian $w$-path whose other endpoint lies in $\{x, y\}$;

\item $H - v$ and $H - w$ each contain a hamiltonian path with one endpoint in $\{ v, w\}$ and the other endpoint in $\{x, y\}$;

\item $H - u$ has a hamiltonian $vw$-path for any vertex $u \in V(H) \setminus \{v,w\}$;

\item $H$ has a $3$-leaf spanning tree with $v,w$ leaves and the remaining leaf and branch in $\{x,y\}$.
\end{enumerate}

Let $H'$ be a graph containing distinct vertices $v,w$ such that there is a hamiltonian $vw$-path, and for every $u \in V(H') \setminus \{ v, w \}$ the graph $H' - u$ admits a hamiltonian $vw$-path or a $3$-leaf spanning tree with $v$ and $w$ as leaves. We say $H'$ is a \emph{Type~2} graph.


\begin{thm}\label{thm:construction_fc3}
    For any integer $k\geq 3$, any Type~1 graph of order $n_0$ and any $k$ choices of Type~2 graphs of order $n_1,\ldots, n_k$, respectively, there is a graph $G$ of order $\sum_{i=0}^kn_i$ 
    with $\varphi(G) = 3$. Moreover, if the Type~2 graphs have the property that for any vertex $y$ and $v,w$ as defined above, for any hamiltonian $vy$-path there is no hamiltonian $wy$-path, or vice versa, then this also holds for $k = 2$.
\end{thm}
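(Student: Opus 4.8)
The plan is to build the graph $G$ by arranging the $k$ Type~2 graphs $H'_1,\ldots,H'_k$ together with one Type~1 graph $H$ in a cyclic pattern, gluing them along their distinguished pairs of vertices. Concretely, I would identify the vertex $w$ of $H$ with the vertex $v_1$ of $H'_1$, then $w'_1$ with $v'_2$, and so on cyclically, finally identifying $w'_k$ with $v$ of $H$. (The precise identification scheme should be chosen so that the resulting graph is $2$-connected; the path-partition property~(ii) of the Type~1 graph is exactly what guarantees this, since it provides two internally disjoint $vw$-paths through $H$, and each Type~2 graph contributes a hamiltonian $v'w'$-path.) The order of $G$ is then $\sum_{i=0}^k n_i$ as claimed, where $n_0 = |V(H)|$ and $n_i = |V(H'_i)|$.

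Next I would compute ${\rm ml}(G)$. Because $H$ has no hamiltonian $vw$-path (property~(i)), any spanning tree of $G$ restricted to $H$ must have a leaf or branch that forces the leaf count up; using property~(vi) (the $3$-leaf spanning tree of $H$ with $v,w$ as leaves and the third leaf/branch among $\{x,y\}$) together with the hamiltonian $v'w'$-paths in each Type~2 block, I would exhibit a spanning tree of $G$ with exactly $3$ leaves, and then argue no spanning tree can do better: stitching the blocks together around the cycle, one shows that $H$'s block is forced to contribute a net deficiency of one more leaf than a hamiltonian segment would, so ${\rm ml}(G)=3$. Then for the vertex-deleted subgraphs $G-u$: if $u$ lies in a Type~2 block $H'_i$, property of Type~2 graphs gives either a hamiltonian $v'_iw'_i$-path in $H'_i-u$ (in which case $G-u$ has a spanning tree isomorphic, outside that block, to the one for $G$, so ${\rm ml}(G-u)=3$ with transition cost $0$ across all other blocks) or a $3$-leaf spanning tree of $H'_i-u$ with $v'_i,w'_i$ as leaves; and if $u$ lies in $H$, properties~(iii)--(v) supply the needed hamiltonian paths through $H-u$ (or $H-v$, $H-w$) so that ${\rm ml}(G-u)\le 3$. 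In every case one checks ${\rm ml}(G-u)\ge 2$ as well (since $H$ stays non-hamiltonian-$vw$-path-free in enough of these, or by a parity/connectivity count). The key point for the fault cost is that the ml-subgraph of $G$ can always be repaired, after deleting $u$, by modifying only vertices inside the single affected block plus at most the two or three interface vertices $v,w,x,y$ — giving transition cost at most $3$ — while for $u=v$ or $u=w$ (or a carefully chosen internal vertex) the change is forced to touch exactly $3$ vertices, so $\varphi(G)=3$ exactly, not less. Proposition~\ref{fc0} rules out $\varphi(G)=0$, and a degree-sum parity argument as in that proposition rules out $\varphi(G)=1$; ruling out $\varphi(G)=2$ requires identifying a vertex (an interface vertex, or a vertex of $H$) whose deletion forces a degree change at three vertices no matter which ml-subgraph of $G-u$ is chosen, which is where property~(vi) and the rigidity of $H$ are used.

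For the final sentence — improving $k\ge 3$ to $k=2$ under the extra hypothesis — the obstacle is that with only two Type~2 blocks the cyclic structure is thinner, and one has to rule out the possibility that deleting an interface vertex allows a cheap ($\le 2$) repair by rerouting through the "other side." The extra hypothesis (for any vertex $y$ and any hamiltonian $vy$-path in a Type~2 block there is no hamiltonian $wy$-path, or vice versa) is precisely designed to obstruct such a reroute: it prevents a hamiltonian path from entering a Type~2 block at one interface vertex and leaving at the other while simultaneously accommodating a branch at $y$, so the forced structure near $H$ persists even with $k=2$. I would therefore redo the lower-bound argument $\varphi(G)\ge 3$ checking each interface vertex and each vertex of $H$ under this assumption.

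\medskip

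\textbf{Main obstacle.} The hardest part is the lower bound $\varphi(G)\ge 3$: showing that for \emph{every} ml-subgraph $S$ of $G$ there exists a vertex $u$ such that \emph{every} ml-subgraph of $G-u$ differs from $S$ in at least three non-$u$ vertices. This requires a careful case analysis of how an ml-subgraph can route through the Type~1 block $H$ (using exactly properties~(i),(ii),(vi) to pin down the possible degree patterns at $v,w,x,y$) and then an argument that any $3$-leaf spanning tree of $G-u$ is forced to redistribute degrees at the interface. The $k=2$ refinement compounds this, since the usual "escape route through a third block" is unavailable and one must lean entirely on the non-existence-of-complementary-hamiltonian-paths hypothesis.
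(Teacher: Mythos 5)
Your proposal breaks down at its central structural claim: you assert that ${\rm ml}(G)=3$ and build the whole transition-cost analysis on that, but for this construction ${\rm ml}(G)=2$. Property~(ii) of the Type~1 graph is designed precisely to make $G$ traceable: the $vx$-path $P$ and $wy$-path $Q$ partitioning $V(H_0)$, concatenated with hamiltonian $v_iw_i$-paths through every Type~2 block around the cycle, give a hamiltonian path of $G$ whose two endpoints are $x$ and $y$ inside the Type~1 block, while (i) shows $G$ is non-hamiltonian. So the ml-subgraphs of $G$ are its hamiltonian paths, and the fault cost of $3$ does not come from a $3$-leaf optimum in $G$; it comes from the fact that deleting a ``middle'' interface vertex $u\in\{v_i,w_i\}\setminus\{v_1,w_k\}$ destroys traceability (any hamiltonian path of $G-u$ would need a hamiltonian $v_0w_0$-path of $H_0$, which (i) forbids), so every ml-subgraph of $G-u$ is a $3$-leaf tree; such a tree necessarily has two leaves in the Type~2 chain (degree changes away from the endpoints of the chosen hamiltonian path) plus at least one further degree change inside $H_0$, forcing $\tau\ge 3$, while the special hamiltonian path ending at $x,y$ keeps every other vertex's transition cost at most $3$ (this uses (iii)--(vi)). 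Your claimed ``net deficiency of one more leaf'' in the Type~1 block, and the subsequent bookkeeping (e.g.\ transition cost $0$ ``across all other blocks'' when a Type~2 block yields a $3$-leaf tree), are therefore not correct as stated. Two further problems: your parity argument cannot rule out $\varphi(G)=1$ --- there is no parity obstruction to fault cost $1$ (such graphs exist, as the paper's Section on tfc1 graphs shows); the lower bound must instead be proved by showing $\varphi_S(G)\ge 3$ for \emph{every} hamiltonian path $S$, which requires noting that at least one endpoint of any hamiltonian path lies in $H_0$ (by (i)) and then handling the case where the other endpoint lies in $H_1$ or $H_k$ --- exactly the place where the extra hypothesis is needed when $k=2$. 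Finally, your gluing-by-identification scheme contradicts the stated order: identifying $k+1$ vertex pairs gives $\sum_i n_i-(k+1)$ vertices, whereas the intended construction keeps the blocks disjoint and adds the connecting edges $w_iv_{i+1}$ ($0\le i\le k-1$) and $w_kv_0$, which is what yields order $\sum_{i=0}^k n_i$.

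In short, the high-level intuition that some vertex deletion must force three degree changes is the right mechanism, but the argument as proposed would fail at the first quantitative step (computing ${\rm ml}(G)$), and the repair requires reorganising the proof around hamiltonian paths of $G$ versus $3$-leaf spanning trees of the critical vertex-deleted subgraphs, not around $3$-leaf spanning trees of $G$ itself.
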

\begin{proof}
    Let $G$ be the graph obtained by the disjoint union of a Type~1 graph $H_0$,
    where we denote $v, w$, as defined above, by $v_0, w_0$, and $k\ge 2$
    Type~2 graphs $H_1, \ldots, H_k$, where we denote $v,w$ in $H_i$, as defined above, by $v_i, w_i$, by adding the edges $w_iv_{i+1}$ for $i\in \{0,\ldots, k-1\}$ and $w_kv_0$. For the sake of convenience, we define $H_{k+1} := \emptyset$. We prove that $G$ has fault cost $3$. 

    The ml-subgraphs of $G$ are its hamiltonian paths. Indeed, $G$ is non-hamiltonian since $H_0$ has no hamiltonian $v_0w_0$-path by (i), but $G$ is traceable, since every $H_i$, $i>0$, has a hamiltonian $v_iw_i$-path and $H_0$ has disjoint $v_0$- and $w_0$-paths spanning all vertices of $H_0$ by (ii).

    Let us fix some ml-subgraph $\mathfrak{p}$ of $G$ and denote its leaves by $x$ and $y$. Due to (i) at least one of its leaves is a vertex in $H_0$. We assume for now that both $x$ and $y$ lie in $H_0$ and look at the ml-subgraphs of vertex-deleted subgraphs $G - u$ of $G$.

    Suppose $u \in \{ v_0, w_0 \}$.
    Then $G - u$ is not hamiltonian as we remove a vertex from a $2$-separator of $G$.
    However, $G - u$ is traceable since $H_0 - u$ has a hamiltonian $v_0$- or $w_0$-path by (iv). Therefore its ml-subgraphs are hamiltonian paths. Depending on the choice of path and the choice of $x$ and $y$,
    this means that either
    zero, 
    one or three vertices of $H_0$ change degree in an ml-subgraph $S_u$ of $G - u$. Since $S_u$ has a leaf in $H_1$ or $H_k$, we get $\tau(\mathfrak{p},S_u) \in \{1,2,4\}$. Note that $\tau(T, S_u) = 1$ can only happen when $u$ is $x$ or $y$ since we assume that $x,y\in V(H_0)$.

    Consider $u\in V(H_0)\setminus \{v_0, w_0\}$. Since $H_0 - u$ has a hamiltonian $v_0w_0$-path by (v), $G - u$ is hamiltonian so we have $\tau(\mathfrak{p}, S_u) \in \{ 1,2 \}$ in any ml-subgraph $S_u$ of $G - u$.  

    Suppose $u\in \{v_1, w_k\}$.
    Then $G - u$ is non-hamiltonian as we have removed a vertex from a $2$-separator of $G$.
    Since $H_i$, $i\in \{1,k\}$, has a hamiltonian $v_iw_i$-path because it is a Type~2 graph, we also obtain a hamiltonian path in $H_i$ by removing $u$.
    As $H_0$ has a hamiltonian $v_0$- or $w_0$-path by (iii), we get that $G - u$ is traceable and that its ml-subgraphs are the hamiltonian paths.
    Depending on the choice of path and the choice of $x$ and $y$, this means that either one or three vertices of $H_0$ change degree in an ml-subgraph $S_u$ of $G - u$. As $S_u$ has a leaf in $H_1$ or $H_k$, we have that
    $\tau(\mathfrak{p}, S_u) \in \{2,4\}$. 

    Assume $u\in \{v_i, w_i\}\setminus \{v_1, w_k\}$ for an arbitrary but fixed $i\in \{1,\ldots, k\}$. Then a vertex of $H_i$ and of $H_{i-1}$ or $H_{i+1}$ 
    will have degree $1$ in an ml-subgraph of $G - u$. Since $H_0$ has no hamiltonian $v_0w_0$-path by (i), $G - u$ is not traceable. However, since $H_0$ has a $3$-leaf spanning tree in which $v_0$ and $w_0$ are leaves by (vi), the ml-subgraphs of $G - u$ are trees with exactly three leaves. Similarly, we can find $3$-leaf spanning
    trees of $G - u$ of which the restriction to $H_0$ is a hamiltonian $v_0$- or $w_0$-path of $H_0$ by (iii).
    An ml-subgraph $S_u$ of $G - u$ either has one, two, three or four vertices in $H_0$ which change degree with respect to $\mathfrak{p}$, depending on the choice of $3$-leaf spanning tree of $H_0$ (or hamiltonian $v_0$- or $w_0$-path of $H_0$) and the choice of $x$ and $y$. Therefore, $\tau(\mathfrak{p}, S_u)\in \{3,4,5,6\}$. We note that any $3$-leaf spanning tree of $H_0$ yielding an ml-subgraph of $G-u$ must have $v_0$ and $w_0$ as leaves.

    Finally, suppose $u\in V(H_i)\setminus\{v_i, w_i\}$ for an arbitrary but fixed $i\in \{1,\ldots, k\}$. Then $G - u$ is still not hamiltonian. If $H_i - u$ has a hamiltonian $v_iw_i$-path, then there is an ml-subgraph $S_u$ of $G - u$ for which $\tau(\mathfrak{p}, S_u) = 0$. If there is no such path, then $H_i$ has a $3$-leaf spanning tree with leaves $v_i$ and $w_i$, as it is a Type~2 graph, and there exist ml-subgraphs $S_u$ in $G - u$ for which $\tau(\mathfrak{p}, S_u) = 2$.

    We have now shown that for any hamiltonian path $\mathfrak{p}$ of $G$ with both leaves in $H_0$ that $\varphi_\mathfrak{p}(G)\geq 3$.

    It also holds that for any hamiltonian $x'y'$-path $\mathfrak{p}'$ with $x'$ in $H_0$ and $y'$ in $H_1$ or $H_k$ we have that $\varphi_{\mathfrak{p}'}(G)\geq 3$. Note that at least one leaf must be in $H_0$ by (i).

    Indeed, taking $u\in \{v_i, w_i\}\setminus\{v_1, w_k\}$ for an arbitrary but fixed $i\in \{1,\ldots, k\}$, an ml-subgraph $S_u$ of $G - u$ has leaves in $H_i$ and $H_{i-1}$ or $H_{i+1}$ and a leaf and a branch in $H_0$. If $k\geq 3$, we can take $u$ such that $y$ cannot be one of these leaves and we get $\tau(\mathfrak{p}',S_u)\geq 4$. If $k = 2$, we have by the extra assumption on $H_1$ and $H_2$ that we can take $u$ such that $y$ is not a leaf in any ml-subgraph $S_u$ of $G-u$. Hence, $\tau(\mathfrak{p}', S_u)\geq 3$. 

    If we now take $\mathfrak{p}$ to be a hamiltonian path in $G$ with leaves $x,y$ for which (i)--(vi) hold. 
    Then by the above we have $\varphi_\mathfrak{p}(G) = 3$. 

    As, by definition, $\varphi(G) = \min_{S \in {\cal S}_{{\rm ml}}(G)} \varphi_S(G)$, this shows that $\varphi(G) = 3$.
    
\end{proof}

\begin{cor}\label{cor:family_cubic_fc3}
    There exist infinitely many cubic graphs with fault cost $3$.
\end{cor}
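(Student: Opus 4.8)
The plan is to invoke Theorem~\ref{thm:construction_fc3} with cubic building blocks and let the number of Type~2 pieces tend to infinity. The key observation is that the graph $G$ produced in the proof of Theorem~\ref{thm:construction_fc3} from a Type~1 graph $H_0$ and Type~2 graphs $H_1,\dots,H_k$ is obtained by adding the edges $w_0v_1,w_1v_2,\dots,w_{k-1}v_k,w_kv_0$; each of the vertices $v_0,w_0,v_1,w_1,\dots,v_k,w_k$ is incident with exactly one of these new edges, and no other vertex changes degree. Hence, if in each $H_i$ the two distinguished vertices $v_i,w_i$ have degree $2$ while every other vertex has degree $3$, then $G$ is cubic, and $\varphi(G)=3$ by Theorem~\ref{thm:construction_fc3}. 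Fixing one such Type~1 graph $H_0$ and, for each $k\ge 3$, taking exactly $k$ copies of a fixed such Type~2 graph $H^\ast$, we obtain graphs $G_k$ with $|V(G_k)|=|V(H_0)|+k\,|V(H^\ast)|$, hence pairwise non-isomorphic; this yields the desired infinite family.

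So it remains to exhibit one Type~2 graph and one Type~1 graph in which the distinguished vertices have degree $2$ and all other vertices have degree $3$. For the Type~2 graph one may take $H^\ast:=K_4-e$ with $v,w$ the two non-adjacent vertices: then $v,w$ have degree $2$, the remaining two vertices have degree $3$, there is a hamiltonian $vw$-path, and deleting either degree-$3$ vertex leaves a path on three vertices joining $v$ and $w$, so $H^\ast$ is a Type~2 graph. For the Type~1 graph one may take $H_0:=P-e_0$, the Petersen graph with one edge deleted, let $v_0,w_0$ be the endpoints of $e_0$ (so they have degree $2$ and all other vertices degree $3$), and choose $x,y$ suitably among the remaining eight vertices. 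Conditions (i)--(vi) are then checked using standard properties of $P$: it is non-hamiltonian (giving (i)), traceable and vertex-transitive (supplying the hamiltonian $v_0$- and $w_0$-paths required in (ii)--(iv)), and hypohamiltonian, so that $P-u$ is hamiltonian for every vertex $u$ (which, routed through $e_0$, yields (v)); condition (vi) follows by deleting one edge from such a path.

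The main obstacle is precisely this verification of (i)--(vi) for the concrete cubic-minus-an-edge graph $H_0$: each is a finite statement, but all six must hold for the \emph{same} choice of $v_0,w_0,x,y$, and the least routine is (v), which demands that $P-e_0-u$ admit a hamiltonian $v_0w_0$-path --- equivalently, that $P-u$ have a hamiltonian cycle using $e_0$ --- for \emph{every} $u\neq v_0,w_0$. By edge-transitivity of $P$ this splits into two cases for the orbit of $u$: if $u$ is adjacent to an endpoint of $e_0$, then that endpoint has degree $2$ in $P-u$, so every hamiltonian cycle of $P-u$ (one exists, as $P$ is hypohamiltonian) uses $e_0$; if $u$ lies at distance $2$ from both endpoints of $e_0$, one exhibits a single hamiltonian cycle of $P-u$ through $e_0$. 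Condition (ii) similarly reduces to producing one hamiltonian path of $P$ minus two adjacent vertices. Alternatively, (i)--(vi) for $H_0=P-e_0$ --- and more generally for any candidate small cubic-minus-an-edge graph --- can be confirmed by direct computation.
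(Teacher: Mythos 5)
Your proposal is correct and is essentially the paper's own argument: both apply Theorem~\ref{thm:construction_fc3} for growing $k$ with blocks whose two distinguished vertices have degree $2$ and all other vertices degree $3$, so that the linking edges $w_iv_{i+1}$, $w_kv_0$ make $G$ cubic, and in fact your Type~1 candidate $P-e_0$ is exactly the graph of Fig.~\ref{fig:family_cubic_fc3}(a) (adding back the edge $vw$ there yields a cubic girth-$5$ graph on $10$ vertices, i.e.\ the Petersen graph), while your Type~2 block $K_4-e$ merely replaces the paper's choice, $K_{3,3}$ minus an edge, which is equally admissible for $k\ge 3$. Like the paper (which calls the check of (i)--(vi) ``tedious but straightforward''), you leave this finite verification partly open; only your parenthetical justification of (vi) (``deleting one edge from such a path'') is not literally correct---what is needed is a spanning spider with centre and third leaf in $\{x,y\}$---but your stated fallback of confirming (i)--(vi) by direct computation covers this.
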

\begin{proof}
    Consider an integer $k\geq 2$ and let $H_0$ be the graph of Fig.~\ref{fig:family_cubic_fc3}. It is tedious but straightforward to verify that it is a Type~1 graph for the indicated $v$, $w$, $x$, and $y$. Let $H_i$, for $i\in \{1,\ldots, k\}$, be copies of the graph in Fig.~\ref{fig:family_cubic_fc3}(b). Again it is easy to verify that each $H_i$ is a Type~2 graph. The result follows by Theorem~\ref{thm:construction_fc3}. It also holds for $k = 2$ as the extra condition of the theorem is satisfied for the graph in Fig.~\ref{fig:family_cubic_fc3}(b).
\end{proof}

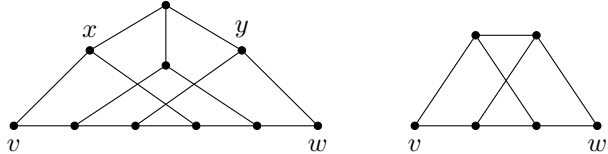
\begin{figure}[!htb]
    \centering
    \tikzstyle{fo} = [draw, circle, fill=black, minimum size={0.15cm}, inner sep=0cm, scale=0.65]
    \begin{tikzpicture}[scale=0.2]
        \node[fo, label=below:$v$] (w) at (0,0) {};
        \node[fo] (1) at (4,0) {};
        \node[fo] (2) at (8,0) {};
        \node[fo] (3) at (12,0) {};
        \node[fo] (4) at (16,0) {};
        \node[fo, label=below:$w$] (v) at (20,0) {};

        \draw (w) -- (1) -- (2) -- (3) -- (4) -- (v);

        \node[fo, label=above:$x$] (5) at (5,5) {};
        \node[fo] (6) at (10,8) {};
        \node[fo, label=above:$y$] (7) at (15,5) {};
        
        \draw (w) -- (5) -- (6) -- (7) -- (v);

        \node[fo] (8) at (10, 4) {};
        
        \draw (6) -- (8);
        \draw (1) -- (8);
        \draw (4) -- (8);

        \draw (3) -- (5);
        \draw (2) -- (7);

    \end{tikzpicture}
    \qquad
    \begin{tikzpicture}[scale=0.8]
        \node[fo, label=below:$v$] (v) at (0,0) {};
        \node[fo] (1) at (1,0) {};
        \node[fo] (2) at (2,0) {};
        \node[fo, label=below:$w$] (w) at (3,0) {};

        \draw (v) -- (1) -- (2) -- (w);
        
        \node[fo] (3) at (1,1.5) {};
        \node[fo] (4) at (2,1.5) {};

        \draw (v) -- (3) -- (4) -- (w);
        \draw (1) -- (4);
        \draw (2) -- (3);
    \end{tikzpicture}
    \caption{Subgraphs used in the proof of Corollary~\ref{cor:family_cubic_fc3}. Left-hand side (a): A Type~1 graph. Right-hand side (b): A Type~2 graph also satisfying the extra condition of Theorem~\ref{thm:construction_fc3}.}
    \label{fig:family_cubic_fc3}
\end{figure}

\subsubsection{Fault cost 1} \label{subsec_fc1}

When looking for graphs with fault cost 1, the first candidates to go over is the family of all graphs with minimum leaf number 2, that is: traceable, but not hamiltonian graphs. For traceable fault cost 1 graphs we will use the shorthand term \emph{tfc1 graphs} in the sequel. Tfc1 graphs $G$ must be 2-leaf-guaranteed: they cannot be hamiltonian, for otherwise the fault cost would be 0 or 2, showing that $\ml (G) =2$. Now we have to prove that $\ml (G-x)\leq 2$ for all $x\in V(G)$. Assume this is not true for some $x$: then any spanning tree $S_x$ of $G-x$ has at least 3 leaves and therefore there would also exist a vertex of $S_x$ of degree at least 3, thus we would have at least 4 vertices of $S_x$ not of degree 2, contradicting the tfc1 property of $G$. In fact, it is not difficult to see that in every tfc1 graph there exist at most two vertices whose deletion yields a hamiltonian graph:


Let $G$ be tfc1. We already know that $G$ is 2-leaf-guaranteed, so an optimal ml-subgraph $P$ of $G$ must be a hamiltonian path. Let the end-vertices of $P$ be $a_1$ and $a_2$. If we delete a vertex $v$ in $G$ different from $a_1$ and $a_2$, we cannot obtain a hamiltonian graph, as then every ml-subgraph of $G - v$ would be a hamiltonian cycle and the degrees of $a_1$ and $a_2$ would be different in $P$ and any ml-subgraph of $G - v$. This contradicts the fact that $G$ has fault cost 1 or that $P$ was an optimal ml-subgraph.



\begin{cl} \label{elsoclaim} Let $G$ be a $2$-leaf-stable graph. Then $G$ has fault cost $1$ if and only if there exist vertices $a_1,a_2\in V(G)$, such that there exists a hamiltonian $a_1a_2$-path in $G$ and for any vertex $x\in V(G)-a_1-a_2$ there exists a hamiltonian $a_1a_2$-path in $G-x$ as well.
\end{cl}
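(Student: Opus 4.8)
The plan is to prove the two directions of the equivalence separately, exploiting throughout that $G$ is $2$-leaf-stable, i.e.\ ${\rm ml}(G) = 2$ and ${\rm ml}(G-x) = 2$ for all $x \in V(G)$. This already tells us that $G$ is traceable but not hamiltonian, and that every vertex-deleted subgraph $G-x$ is traceable. The degree sequence of any hamiltonian path is ``two leaves, all other vertices of degree $2$'', so the transition cost between two hamiltonian paths is governed entirely by where their endpoints lie.

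For the forward direction, assume $\varphi(G) = 1$ and let $P$ be an optimal ml-subgraph witnessing this; as explained in the text preceding the claim, $P$ must be a hamiltonian path, with end-vertices $a_1, a_2$. I would first argue that deleting any $x \notin \{a_1,a_2\}$ cannot make $G$ hamiltonian: otherwise the ml-subgraphs of $G-x$ would all be hamiltonian cycles, in which $a_1$ and $a_2$ both have degree $2$, whereas in $P$ they have degree $1$, forcing $\tau(P,S_x) \ge 2$ and contradicting $\varphi_P(G) = 1$. Hence for every such $x$, ${\rm ml}(G-x) = 2$ gives a hamiltonian path $P_x$ in $G-x$, and we need $\tau(P, P_x) \le 1$. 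Comparing degree sequences: the vertices that change degree between $P$ and $P_x$ are exactly the (at most four) endpoints of $P$ and $P_x$ that are not shared. Since $x$ is deleted it is ignored in $\tau$, but in $P$ the vertex $x$ has degree $1$ or $2$; a short parity/counting argument (sum of degrees in $P$ restricted to $V(G)\setminus\{x\}$ versus sum of degrees in $P_x$) shows that at most one vertex of $V(G)\setminus\{x\}$ can differ only if the two endpoints of $P_x$ are ``balanced against'' the two endpoints of $P$ and the removed vertex $x$ — the upshot being that $\{a_1,a_2\}$ must coincide with the endpoint set of $P_x$. Concretely, if $P_x$ had an endpoint $b \notin \{a_1,a_2,x\}$ then $b$ changes degree ($1$ in $P_x$, $2$ in $P$), and moreover $x$ being an internal vertex of $P$ would force at least one of $a_1,a_2$ to change as well, giving $\tau \ge 2$. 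So $P_x$ is a hamiltonian $a_1a_2$-path in $G-x$ (handling the boundary cases where $x$ is itself an endpoint of $P$, which are excluded since $x \notin \{a_1,a_2\}$, and where an endpoint of $P_x$ equals $x$, which is impossible). This yields the stated vertices $a_1,a_2$.

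For the converse, suppose such $a_1,a_2$ exist. Take $P$ to be the given hamiltonian $a_1a_2$-path of $G$; I claim $\varphi_P(G) \le 1$, which combined with $\varphi(G) \ne 0$ (as $G$ is not $1$-hamiltonian, being non-hamiltonian) gives $\varphi(G) = 1$. For $x \notin \{a_1,a_2\}$ we are handed a hamiltonian $a_1a_2$-path $P_x$ of $G-x$; both $P$ and $P_x$ have the same endpoints $a_1,a_2$ and are hamiltonian paths on their respective vertex sets, so the only vertex of $V(G)\setminus\{x\}$ that can change degree is $x$'s former neighbour(s) in $P$ — but careful bookkeeping shows that in fact $\deg_P(w) = \deg_{P_x}(w) = 2$ for every $w \in V(G)\setminus\{x,a_1,a_2\}$ unless $w$ was adjacent to $x$ on $P$, and at most one such $w$ can acquire degree $\ne 2$ after rerouting; hence $\tau(P,P_x) \le 1$. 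For $x \in \{a_1,a_2\}$, say $x = a_1$: since ${\rm ml}(G - a_1) = 2$ (again $G-a_1$ is not hamiltonian, as $G$ is $2$-leaf-stable, so its ml-subgraph is a hamiltonian path), pick any hamiltonian path $P_{a_1}$ of $G - a_1$; the vertex $a_1$ is deleted and thus ignored, and at most the former neighbour of $a_1$ on $P$ plus the two endpoints of $P_{a_1}$ can differ — but $a_2$ had degree $1$ in $P$ and one checks it has degree $1$ in $P_{a_1}$ too (or the one change is absorbed), so $\tau(P, P_{a_1}) \le 1$. Taking the maximum over all $x$ gives $\varphi_P(G) \le 1$.

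The main obstacle will be the careful transition-cost bookkeeping in both directions: one has to be precise about which of the up-to-four endpoints (two for $P$, two for $P_x$) coincide, and use the fact that the deleted vertex $x$ is excluded from $\tau$ together with a hand-shaking/degree-sum identity to pin down that the endpoint sets must match. The cleanest way to organise this is the observation that for two hamiltonian paths $P, P'$ on vertex sets $V$ and $V \setminus \{x\}$ respectively, $\tau(P, P') = |(\{a_1,a_2\} \triangle \{b_1,b_2\}) \setminus \{x\}|$ where $\{b_1,b_2\}$ are the endpoints of $P'$, possibly corrected by the at most one internal vertex near $x$; proving this little lemma first will make both halves of the claim short. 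The boundary cases ($x$ equal to an endpoint of $P$ or of $P_x$) need to be listed explicitly but are each immediate.
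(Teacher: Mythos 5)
Your proposal follows essentially the same route as the paper's proof: in both directions you compare degree sequences of hamiltonian paths, note that the transition cost between two such paths is governed by the symmetric difference of their endpoint sets, and in the forward direction you take an optimal hamiltonian path $P$ with ends $a_1,a_2$ and force the endpoints of a cheap ml-subgraph of $G-x$ to be exactly $\{a_1,a_2\}$ --- this part is correct (your preliminary argument that $G-x$ cannot be hamiltonian is redundant, since $2$-leaf-stability already gives ${\rm ml}(G-x)=2$ for every $x$).

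Two points in your converse direction need repair, one of them a genuine error. First, for $x \notin \{a_1,a_2\}$ there is nothing to ``reroute'': $\tau$ compares only per-vertex degrees, and a hamiltonian $a_1a_2$-path $P_x$ of $G-x$ assigns every vertex of $V(G)\setminus\{x\}$ the same degree as $P$ does (degree $1$ at $a_1,a_2$, degree $2$ elsewhere), so $\tau(P,P_x)=0$; your scenario of a former neighbour of $x$ ``acquiring degree $\ne 2$'' cannot occur. Second, and this is the flaw: for $x=a_1$ you may \emph{not} ``pick any hamiltonian path $P_{a_1}$ of $G-a_1$''. If its endpoints avoid $a_2$, then $a_2$ changes degree ($1$ to $2$) and both endpoints change ($2$ to $1$), giving $\tau(P,P_{a_1})=3$, so the claimed bound fails for that choice; the parenthetical ``or the one change is absorbed'' does not rescue it. Since $\varphi_P(G)$ only requires the \emph{minimum} over ml-subgraphs of $G-a_1$ to be at most $1$, you must exhibit a good witness: take $P-a_1$ (as the paper does), which is a hamiltonian path of $G-a_1$ with $a_2$ as an endpoint, whence $\tau(P,P-a_1)=1$. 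With that substitution your argument is complete and coincides with the paper's.
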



\begin{proof} Let $P$ be a hamiltonian $a_1a_2$-path in $G$. We point out that $P$ is an ml-subgraph of $G$ because $G$ is non-hamiltonian. In $G - a_i$ the hamiltonian path $P - a_i$ is an ml-subgraph and $\tau(P,P - a_i) = 1$. For all $v \in V(G) \setminus \{ a_1, a_2 \}$, let $M$ be any ml-subgraph of $G - v$. Then $M$ is not a hamiltonian cycle as $G$ is 2-leaf-stable. If $M$ is chosen to be a hamiltonian $a_1a_2$-path, which we know exists, then $\tau(P,M) = 0$. So $\varphi_P(G) = 1$. Since $G$ is not 1-hamiltonian, by Claim~1 we have $\varphi(G) = 1$.

In order to prove the other direction, let $P$ be an optimal ml-subgraph of $G$. Since $G$ is 2-leaf-stable, $P$ is a hamiltonian path; let $a_1,a_2$ be the endvertices of $P$ and let $x$ be an arbitrary vertex in $V(G)-a_1-a_2$. 
$G - x$ is non-hamiltonian, so there exists a minimum leaf spanning tree $P'$ of $G-x$ (where $P'$ is a hamiltonian path, since $G$ is 2-leaf-stable), such that at most one of the vertices of $G-x$ has different degrees in $P$ and $P'$. We show that the endvertices of $P'$ are $a_1$ and $a_2$. Assume to the contrary that the endvertices are $b,c$, such that $\{a_1,a_2\} \ne \{b,c\}$, and  w.l.o.g. also assume $a_1\ne b, a_1\ne c$. Then $a_1$ and (at least) one of the vertices $b,c$ have different degrees in $P$ and $P'$, a contradiction. 
\end{proof}

Graphs that are 2-leaf-stable and have fault cost 1 are called \textit{$2$-lsfc1} in the sequel. 

\begin{remark} The vertices $a_1$ and $a_2$ do not have to be unique.
\end{remark}


\noindent An immediate corollary of the previous claim is that tfc1 graphs can have at most two vertices of degree 2 (by deleting a neighbour of a vertex $x\not\in \{a_1,a_2\}$ of degree 2, we cannot have a hamiltonian $a_1a_2$-path). On the other hand, $a_1$ and $a_2$ might have degree 2 (and also any degree greater than 1), as we shall see later. This also means that tfc1 graphs need not be 3-connected. Now we are dealing with tfc1 graphs of connectivity 2, for which we need the following notions. Let $X$ be a 2-separator of a graph $G$ and let $H$ be one of the components of $G-X$. Then $G[V(H) \cup X]$ is called a \emph{$2$-fragment} of $G$, and $X$ is called the \emph{attachment} of $H$. Let $G_1$ and $G_2$ be graphs, such that there exist two vertices $x,y$, such that  $\{x,y\} = V(G_1)\cap V(G_2)$. Then $G_1:G_2$ denotes the graph obtained by \emph{gluing together} $G_1$ and $G_2$ at the vertices $x,y$, i.e.\ the graph with vertex set $V(G_1)\cup V(G_2)$ and edge set $E(G_1)\cup E(G_2)$. The next claim follows easily from Claim~\ref{elsoclaim}.   

\begin{cl} \label{masodikclaim} Let $G$ be a $2$-lsfc1 graph, $a_1,a_2$ as described in Claim~\ref{elsoclaim}, and let $\{x,y\}$ be a separator of $G$. Then the following hold.  \begin{enumerate}[label=\normalfont{(\roman*)}]
\item $\{a_1,a_2\} \cap \{x,y\} = \emptyset$.
\item There are exactly two different $2$-fragments $G_1, G_2$ of $G$ with attachment $\{x,y\}$, such that $a_i \in V(G_i)$ for $i=1,2$.
\item There exists a hamiltonian $a_ix$-path in $G_i-y$ and a hamiltonian $a_iy$-path in $G_i-x$ for $i=1,2$.
\item For any $v\in V(G_i) \setminus \{ a_i \}$ there exists a hamiltonian $a_ix$- or $a_iy$-path in at least one of the graphs $G_i-v$, $G_i-x-v$, $G_i-y-v$ for both $i=1$ and $i = 2$.
\item If $xy \not\in E(G)$ then $G+xy$ is also a tfc1 graph. 
\end{enumerate}
\end{cl}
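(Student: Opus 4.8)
The whole claim should follow from Claim~\ref{elsoclaim} together with one elementary structural fact about how a hamiltonian path can cross a $2$-separator: if $\{x,y\}$ separates a graph $H$ with components $C,C'$ of $H-\{x,y\}$, then in any hamiltonian path $Q$ of $H$ the vertex set $V(C)$ meets $Q$ in a union of subpaths of $Q$, and each endpoint of such a subpath is either an endpoint of $Q$ or a vertex adjacent in $H$ to $\{x,y\}$; since $\{x,y\}$ has only two vertices, this forces $Q$ to cross between $C$ and $C'$ very few times. I would isolate this as a small lemma and use it repeatedly; everything else is a careful application of the hamiltonian $a_1a_2$-paths supplied by Claim~\ref{elsoclaim} in $G$ and in all $G-z$ with $z\notin\{a_1,a_2\}$. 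Throughout, $P$ denotes a fixed hamiltonian $a_1a_2$-path of $G$.

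For (i): if $a_1\in\{x,y\}$ and $\{x,y\}\neq\{a_1,a_2\}$, pick $z\in\{x,y\}\setminus\{a_1\}$; then $G-z$ has a hamiltonian $a_1a_2$-path by Claim~\ref{elsoclaim}, yet $a_1$ is a cut vertex of $G-z$ because $G-\{x,y\}$ is disconnected, and a hamiltonian path with $a_1$ as an endpoint would show $a_1$ is not a cut vertex -- a contradiction; if $\{x,y\}=\{a_1,a_2\}$, then $P-\{a_1,a_2\}$ is a connected spanning subgraph of $G-\{x,y\}$, again a contradiction. For (ii): first $G-\{x,y\}$ has \emph{exactly} two components, since if it had three, then in the connected graph $G-x$ the vertex $y$ would be a cut vertex whose removal leaves at least three components, so $G-x$ would not be traceable, contradicting $2$-leaf-stability; then $a_1,a_2$ lie in different components, because otherwise the hamiltonian $a_1a_2$-path of $G-x$ (which exists by Claim~\ref{elsoclaim} and (i)) splits at the cut vertex $y$ into two subpaths, each inside one component, both then lying in the component containing $a_1,a_2$, forcing the other component to be empty. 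Writing $G_i:=G[V(C_i)\cup\{x,y\}]$ with $a_i\in V(C_i)$ gives the two required $2$-fragments.

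For (iii): apply Claim~\ref{elsoclaim} to $G-x$ and to $G-y$; in $G-x$ the cut vertex $y$ splits the hamiltonian $a_1a_2$-path into a hamiltonian $a_1$-path of $C_1$ and a hamiltonian $a_2$-path of $C_2$, each ending at a neighbour of $y$, and re-attaching $y$ gives hamiltonian $a_iy$-paths in $G_i-x$; symmetrically $G-y$ yields hamiltonian $a_ix$-paths in $G_i-y$. This is (iii), and it immediately settles the cases $v\in\{x,y\}$ of (iv). For the remaining case of (iv), say $v\in V(G_1)\setminus(\{a_1\}\cup\{x,y\})$, take the hamiltonian $a_1a_2$-path $R$ of $G-v$ from Claim~\ref{elsoclaim} and intersect it with $V(G_1-v)=(V(C_1)\setminus\{v\})\cup\{x,y\}$: by the structural fact every endpoint of a resulting subpath is $a_1$ or lies in $\{x,y\}$, and a short bookkeeping argument shows there are at most two subpaths -- the one through $a_1$ ending at some $z_1\in\{x,y\}$, and possibly the singleton $z_2$ with $\{z_1,z_2\}=\{x,y\}$ -- so $R$ restricted to $V(G_1-v)$ is a hamiltonian $a_1z_1$-path of $G_1-v$ or of $G_1-z_2-v$; either way we get a hamiltonian $a_1$-path ending in $\{x,y\}$ in one of $G_1-v$, $G_1-x-v$, $G_1-y-v$.

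For (v): $G+xy$ is traceable (adding an edge preserves traceability) and non-hamiltonian, since a hamiltonian $xy$-path of $G$ is impossible -- covering both $C_1$ and $C_2$ would force $x$ and $y$ to be internal crossing vertices, contradicting their being the path's endpoints (the structural fact once more). To get $\varphi(G+xy)=1$, use $P$ (still an ml-subgraph, as $G+xy$ is traceable and non-hamiltonian) and for each $z\in V(G)$ exhibit an ml-subgraph $M$ of $(G+xy)-z$ with $\tau(P,M)\le 1$: for $z\in\{a_1,a_2\}$ take $M=P-z$, which differs from $P$ only at the former neighbour of $z$ (and even if $(G+xy)-z$ is hamiltonian, $\tau(P,\text{cycle})=1$ since only $a_2$ resp.\ $a_1$ has degree $\neq 2$ in $P$ among vertices $\neq z$); for $z\in\{x,y\}$ note $(G+xy)-z=G-z$, so a hamiltonian $a_1a_2$-path there (Claim~\ref{elsoclaim}) has $\tau(P,\cdot)=0$; and for $z\notin\{a_1,a_2,x,y\}$ check via the structural fact that $G-z$ has no hamiltonian $xy$-path, so $(G+xy)-z$ is non-hamiltonian and its hamiltonian $a_1a_2$-path (Claim~\ref{elsoclaim}) is an ml-subgraph with $\tau(P,\cdot)=0$. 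Then $\varphi_P(G+xy)\le 1$, while $\varphi(G+xy)\ge 1$ by Proposition~\ref{fc0} since $G+xy$ is not $1$-hamiltonian, so $\varphi(G+xy)=1$ and $G+xy$ is tfc1. The main obstacle I anticipate is (iv): the bookkeeping that $R$ meets the fragment $G_i$ in at most two pieces, and the verification that each configuration supplies one of the three required paths; stating the structural crossing fact at the right level of generality so that it also gives the ``exactly two components'' step of (ii) and the non-hamiltonicity claims in (v) is the only other point needing care, the rest being routine given Claim~\ref{elsoclaim}.
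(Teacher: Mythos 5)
Your proof is correct and follows the paper's intended route: the paper gives no written argument for this claim, stating only that it ``follows easily from Claim~\ref{elsoclaim}'', and your write-up is exactly that derivation, combining the hamiltonian $a_1a_2$-paths of $G$ and of the $G-z$'s with the standard observation that such paths can pass between the two sides of the separator only through $x$ and $y$. In particular, the crossing bookkeeping for (iv) (at most one extra block, necessarily a singleton in $\{x,y\}$) and the non-hamiltonicity arguments in (v) are sound, so nothing is missing.
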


Note that statement (iii) of this claim is actually a special case of statement (iv), but it is worth mentioning it in its own right because of its corollaries.

We now focus on proving the existence of $2$-lsfc1 graphs. In order to do so, let us observe that we may suppose that $x$ and $y$ are neighbours in a 2-fragment of a $2$-lsfc1 graph, by point 5 of Claim~\ref{masodikclaim}. Assuming this, statement (iv) of Claim~\ref{masodikclaim} becomes much easier to handle:

\begin{cl} \label{harmadikclaim} Let $G, G_1, G_2, x,y,a_1,a_2$ be as described in Claim~\ref{masodikclaim}, such that $xy\in E(G)$. Then there exists a hamiltonian $a_ix$- or $a_iy$-path in $G_i$ and also in $G_i-v$ for any $v\in V(G_i) \setminus \{ a_i \}$ for $i=1,2$. 
\end{cl}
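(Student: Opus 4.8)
The plan is to extract everything from a single observation: once $xy\in E(G)$, and since $G_i=G[V(H_i)\cup\{x,y\}]$ is an induced subgraph of $G$ containing both $x$ and $y$, we have $xy\in E(G_i)$, and this one edge lets us convert any hamiltonian path of a subgraph of $G_i$ that ends at $x$ (or $y$) and avoids $y$ (or $x$) into a hamiltonian path that reaches $y$ (or $x$). Thus I would reduce both assertions of Claim~\ref{harmadikclaim} to parts (iii) and (iv) of Claim~\ref{masodikclaim}, which already supply such paths in $G_i-x$, $G_i-y$, and in one of $G_i-v$, $G_i-x-v$, $G_i-y-v$.

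For the first assertion, I would invoke Claim~\ref{masodikclaim}(iii) to get a hamiltonian $a_ix$-path $P$ in $G_i-y$. Since $y\notin V(P)$ and $x$ is an endpoint of $P$, adjoining the vertex $y$ via the edge $xy$ extends $P$ to a hamiltonian $a_iy$-path in $G_i$, which is exactly what is required (symmetrically one could start from the hamiltonian $a_iy$-path in $G_i-x$ given by (iii) and adjoin $xy$ to obtain a hamiltonian $a_ix$-path in $G_i$).

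For the second assertion, fix $v\in V(G_i)\setminus\{a_i\}$. If $v\in\{x,y\}$ the conclusion is immediate from (iii): for $v=y$ the graph $G_i-v=G_i-y$ already has a hamiltonian $a_ix$-path, and the case $v=x$ is symmetric. So assume $v\notin\{x,y\}$ and apply Claim~\ref{masodikclaim}(iv): there is a hamiltonian $a_ix$- or $a_iy$-path $P$ in one of $G_i-v$, $G_i-x-v$, $G_i-y-v$. If $P$ lies in $G_i-v$ we are done. If $P$ lies in $G_i-x-v$, then since $x$ is absent $P$ must be a hamiltonian $a_iy$-path; because $v\neq x$ the edge $xy$ is still present in $G_i-v$ and is incident with the endpoint $y$ of $P$, so $P$ together with $xy$ is a hamiltonian $a_ix$-path in $G_i-v$. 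The subcase $P\subseteq G_i-y-v$ is symmetric and yields a hamiltonian $a_iy$-path in $G_i-v$. This covers all cases for both $i=1,2$.

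I do not expect a genuine obstacle here; the argument is essentially mechanical, and its whole content is the edge-reattachment trick. The only points needing (minor) care are the book-keeping of which endpoint the edge $xy$ is glued to in each case, and the separate handling of $v\in\{x,y\}$ so that $xy$ is genuinely available after the deletion — both of which are settled above by appealing to (iii) in those two exceptional cases.
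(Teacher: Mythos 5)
Your argument is correct and is exactly the derivation the paper intends: Claim~\ref{harmadikclaim} is stated there without an explicit proof as an easy consequence of Claim~\ref{masodikclaim}(iii)--(iv) once $xy\in E(G)$, and your edge-reattachment of $xy$ at the endpoint $x$ or $y$ (together with the separate, direct treatment of $v\in\{x,y\}$ via (iii)) is precisely that intended argument.
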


\noindent It seems natural that a graph fulfilling the property described in Claim~\ref{harmadikclaim} is not necessarily a 2-fragment of some tfc1 graph. So we need further  properties. Somewhat surprisingly, these properties are easy to describe and might not even be needed (at least not in both fragments). 

Let $H$ be a connected graph, $a,x,y\in V(H)$, and $xy\in E(H)$. Consider the following properties of the quadruple $(H,a,x,y)$.  

\medskip \noindent ($P_0$) For any $v\in V(H)-a$ there exists a hamiltonian $ax$- or $ay$-path in $H$ and also in $H-v$.

\medskip \noindent ($Q_1$) There exists no hamiltonian $xy$-path in $H$. 

\medskip \noindent ($Q_2$) For any $v\in V(H) - a$ there exists no hamiltonian $xy$-path in $H-v$.

\medskip \noindent The quadruple $(H,a,x,y)$ is said to be a \emph{weak fragment} if it fulfills ($P_0$), a \emph{medium fragment} if it fulfills ($P_0$) and ($Q_1$), and finally a \emph{strong fragment} if it fulfills ($P_0$), ($Q_1$), and ($Q_2$).

For convenience's sake, a graph $H$ can also be called a weak/medium/strong fragment if there exist vertices $a,x,y\in V(H)$, such that $(H,a,x,y)$ is a weak/medium/strong fragment. Note that medium fragments are also weak fragments and strong fragments are both medium and weak fragments as well. By gluing together such fragments we can obtain tfc1 graphs:

\begin{thm} \label{G5} Let $(G_1,a_1,x,y)$ and $(G_2,a_2,x,y)$ be weak fragments. If both of them are also medium or one of them is also strong, then $G:=G_1:G_2$ is a tfc1 graph.
\end{thm}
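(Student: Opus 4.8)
The plan is to show that $G := G_1 : G_2$ satisfies the characterisation of tfc1 graphs implicit in Claims~\ref{elsoclaim}--\ref{harmadikclaim}: namely, that $G$ is non-hamiltonian with $\ml(G) = 2$ (so $G$ is $2$-leaf-stable, or at least $2$-leaf-guaranteed), and that there exist vertices---here $a_1, a_2$---such that $G$ has a hamiltonian $a_1a_2$-path and $G - u$ has a hamiltonian $a_1a_2$-path for every $u \in V(G) \setminus \{a_1, a_2\}$. First I would fix notation: $\{x,y\} = V(G_1) \cap V(G_2)$, $xy \in E(G_1) \cap E(G_2)$ (as guaranteed by the weak-fragment hypothesis; if $xy$ was not already present we add it, using that both fragments have the edge), and $a_i \in V(G_i)$ the distinguished vertex of each fragment. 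Observe that a hamiltonian $a_1a_2$-path of $G$ decomposes, at the separator $\{x,y\}$, into a hamiltonian $a_1$-path of $G_1$ ending in $x$ or $y$ together with a hamiltonian $a_2$-path of $G_2$ ending in the \emph{other} of $x,y$, joined through the edge $xy$ (or, in degenerate position, one side may supply an $x$--$y$ subpath); conversely, such pieces glue back together. So property $(P_0)$ applied to both fragments immediately yields the hamiltonian $a_1a_2$-path in $G$ and in $G - u$ for every $u$ lying in exactly one of the $G_i$ other than $a_1, a_2$ --- one uses the $(P_0)$-path avoiding $u$ in the fragment containing $u$, and a fixed $(P_0)$-path in the other fragment, checking that the two chosen endpoints among $\{x,y\}$ can be made distinct (this is the one place a small parity/case argument is needed: if both fragments only offer, say, an $ax$-path, one must instead route $x$--$y$ through one side internally).

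Next I would establish $\ml(G) = 2$, i.e.\ that $G$ is non-hamiltonian. Suppose $G$ were hamiltonian; a hamiltonian cycle of $G$ meets the $2$-separator $\{x,y\}$ in a way that forces a hamiltonian $xy$-path in one of $G_1, G_2$ (splitting the cycle at $x$ and $y$ gives two internally disjoint $xy$-paths, one inside each fragment, and at least one of them must span its fragment). This is exactly what $(Q_1)$ forbids for a medium fragment; and the hypothesis "both medium, or one strong" guarantees that at least one of the two fragments is medium, hence at least one forbids a hamiltonian $xy$-path --- but both would be needed, so in fact I should argue: a hamiltonian cycle of $G$ induces a hamiltonian $xy$-path in \emph{each} of $G_1$ and $G_2$ simultaneously, so a single medium fragment already gives the contradiction. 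Combined with the first paragraph (which already exhibits a hamiltonian path), this yields $\ml(G) = 2$, and the $(P_0)$-paths through vertex-deleted subgraphs show $\ml(G - u) \le 2$ for all $u$, so $G$ is $2$-leaf-guaranteed, indeed $2$-leaf-stable once we also note $G - u$ is non-hamiltonian for $u \notin \{x,y\}$ --- and the case $u \in \{x,y\}$ needs separate attention.

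The remaining and most delicate case is $u \in \{x, y\}$, say $u = x$. Then $G - x = (G_1 - x) \cup (G_2 - x)$ with the two pieces meeting only at $y$; this is a graph with a cut-vertex $y$. We need a hamiltonian $a_1a_2$-path of $G - x$, which must pass through $y$, hence decomposes into a hamiltonian $a_1y$-path of $G_1 - x$ and a hamiltonian $a_2y$-path of $G_2 - x$. Property $(P_0)$ applied with $v = x$ in each fragment gives a hamiltonian $a_ix$- or $a_iy$-path in $G_i - x$; but $(G_i - x)$ has no vertex $x$, so the "$a_ix$-path" alternative is vacuous and $(P_0)$ forces a hamiltonian $a_iy$-path in $G_i - x$ for $i = 1, 2$. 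Gluing these at $y$ gives the desired path; so this case is actually clean. I also need to rule out that $G - x$ is hamiltonian (which would, via a hamiltonian cycle through the cut-vertex $y$, be absurd since a graph with a cut-vertex has no hamiltonian cycle --- immediate). Finally I assemble: $G$ is $2$-leaf-stable, has a hamiltonian $a_1a_2$-path, and $G - u$ has one for every $u \ne a_1, a_2$; by Claim~\ref{elsoclaim} (the "if" direction), $\varphi(G) = 1$, so $G$ is tfc1. The main obstacle I anticipate is the bookkeeping in the first paragraph --- ensuring for deleted vertices $u \in V(G_i) \setminus \{a_i, x, y\}$ that the endpoint choices in $\{x,y\}$ on the two sides can always be made to disagree (so the pieces concatenate through $xy$ into a single path rather than a cycle or a disconnected pair), which is precisely where $(Q_1)$/strongness is invoked to forbid the bad "both sides want the same endpoint and neither can reroute" configuration, and where the asymmetric hypothesis "one strong suffices in place of both medium" must be used carefully; the $(Q_2)$ property of a strong fragment is presumably what lets one weaken the medium requirement on the \emph{other} side when checking the vertex-deleted hamiltonian $a_1a_2$-paths near the separator.
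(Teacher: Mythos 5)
There are two genuine gaps. First, your gluing is incorrectly described: a hamiltonian $a_1$-path of \emph{all of} $G_1$ ending at $x$ and a hamiltonian $a_2$-path of \emph{all of} $G_2$ ending at $y$ cannot be "joined through the edge $xy$" --- both pieces already contain both $x$ and $y$, so the union has degree $3$ at each of them and is not a path. The step your flagged "parity/case argument" actually needs is to apply $(P_0)$ in the \emph{opposite} fragment with the unused attachment vertex deleted: if $(P_0)$ yields a hamiltonian $a_1z$-path of $G_1-u$ with $z\in\{x,y\}$, pair it with the hamiltonian $a_2z$-path of $G_2-w$ ($w$ the other attachment vertex) that $(P_0)$ forces once $w$ is the deleted vertex; the union is then a hamiltonian $a_1a_2$-path and no parity obstruction arises. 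In particular, $(Q_1)$ and $(Q_2)$ play no role in producing these paths, contrary to your closing paragraph. Their real job, which your proposal never carries out, is to show that $G-v$ is non-hamiltonian for every $v\in V(G)\setminus\{a_1,a_2,x,y\}$: a hamiltonian cycle of $G-v$ with $v\in V(G_1)$ would force a hamiltonian $xy$-path in $G_1-v$ \emph{and} in $G_2$, contradicting either strongness of $G_1$ or mediumness of $G_2$. Without this, the glued hamiltonian path of $G-v$ need not be an ml-subgraph, and the transition-cost bound collapses.

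Second, your final appeal to Claim~\ref{elsoclaim} requires $G$ to be $2$-leaf-stable, and your justification ("$G-u$ is non-hamiltonian for $u\notin\{x,y\}$") is unprovable from the hypotheses and false in general for $u\in\{a_1,a_2\}$: nothing in $(P_0)$, $(Q_1)$, $(Q_2)$ prevents $G_1-a_1$ from having a hamiltonian $xy$-path while $G_2$ is weak but not medium (the "one strong" case), in which case $G-a_1$ is hamiltonian and $G$ is not $2$-leaf-stable; indeed the paper remarks after the proof that exactly $a_1$ and $a_2$ may have this property, and the theorem only claims $G$ is tfc1, not $2$-lsfc1. The repair --- and the paper's route --- is to bypass Claim~\ref{elsoclaim}: fix the glued hamiltonian $a_1a_2$-path $P$, prove $\varphi_P(G)\le 1$ directly, handling $v=a_i$ by observing that the minimum transition cost is $1$ whether $G-a_i$ is hamiltonian (every ml-subgraph is then a hamiltonian cycle and only the other endpoint $a_{3-i}$ changes degree) or not (take $P-a_i$), and conclude $\varphi(G)=1$ since $\varphi(G)\ge 1$ follows from non-hamiltonicity via Proposition~\ref{fc0}. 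The rest of your outline (the case $u\in\{x,y\}$, and non-hamiltonicity of $G$ itself via a single medium fragment) matches the paper and is fine.
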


\begin{proof}
$G$ is easily seen to be traceable: there exists a hamiltonian $a_1x$- or $a_1y$-path $P^1$ in $G_1$, let us assume w.l.o.g.\ the former. There also exists a hamiltonian $a_2x$-path $P^2$ in $G_2-y$. Now $P:=P^1\cup P^2$ is a hamiltonian $a_1a_2$-path of $G$. 

The non-hamiltonicity of $G$ follows immediately from the fact that (at least) one of $G_1$ and $G_2$ is medium and therefore there is no hamiltonian $xy$-path  in (at least) one of $G_1$ and $G_2$. 

Thus $P$ is an ml-subgraph of $G$ and it is enough to show that $\varphi_P(G)\leq 1$ ($G$ is not hamiltonian, so $\varphi (G) \geq 1$ by Proposition~\ref{fc0}). In order to do this we have to show that for each vertex $v\in V(G)$ there exists an ml-subgraph $S_v$ of $G-v$ such that $\tau(P,S_v) \leq 1$. 

Let us consider first the case $v=a_i$ for $i=1,2$. 
If $G - a_i$ is hamiltonian, then any ml-subgraph $S_{a_i}$ of $G-a_i$ is a hamiltonian cycle, thus $\tau(P,S_{a_i}) = 1$.  If $G - a_i$ is not hamiltonian, let $S_{a_i} := P - a_1$. Now $\tau(P,S_{a_i}) = 1$ is obvious again.

Now let $v=x$ (the case $v=y$ is the same). Since both $G_1$
and $G_2$ are weak fragments, there exist hamiltonian $a_iy$-paths $P_i$ of $G_i-x$ for $i=1,2$. Now for the hamiltonian $a_1a_2$-path $P':= P_1 \cup P_2$ of $G-x$ we have $\tau(P,P') = 0$. Note that $P'$ is an ml-subgraph of $G-x$, as $G-x$ is not hamiltonian (actually, not even 2-connected). 

Finally, let $v \in V(G)\setminus \{a_1, a_2,x,y\}$. We show that $G-v$ is not hamiltonian. W.l.o.g.\ let us assume that $v\in G_1$. In order for $G-v$ to be hamiltonian we need a hamiltonian $xy$-path in both $G_1-v$ and $G_2$. The existence of the former implies that $G_1$ is not a strong fragment, while the existence of the latter implies that $G_2$ is not a medium fragment, contradicting the choice of $G_1$ and $G_2$. Now we know that any hamiltonian path of $G-v$ is an ml-subgraph. Now by the weak fragment property of $G_1$ we have a hamiltonian $a_1x$- or $a_1y$-path $P^1$ of $G_1-v$, w.l.o.g.\ let us assume that the endvertices of $P^1$ are $a_1$ and $x$. Since $G_2$ is also a weak fragment, there exists a hamiltonian $a_2x$-path $P^2$ of $G_2-y$. Now for the hamiltonian $a_1a_2$ path $P':= P^1 \cup P^2$ of $G-v$ we have $\tau(P,P') = 0$, finishing the proof.
\end{proof}

Recall that in every tfc1 graph there exist at most two vertices whose deletion yields a hamiltonian graph. Note that the proof of Theorem~\ref{G5} shows that in this construction only $a_1$ and $a_2$ might possess this property.
  
Weak fragments are easy to find, e.g.\ any complete graph of order at least 3 is a weak fragment; and by adding an edge to a non-complete weak fragment we also obtain a weak fragment. Examples are given in Fig.~\ref{weak}.  
\begin{figure}
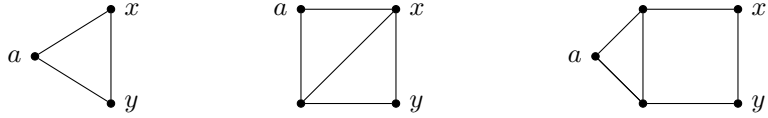

\begin{center}
\tikz[ fo/.style={draw, circle, fill=black, minimum size={0.15cm}, inner sep=0cm, scale=0.65},scale=0.5]{

\node [fo, label=left:\hskip0mm$a$ ] (1) at (0,0) {};
\node [fo, label=right:\hskip0mm$x$ ] (2) at (2,1.25) {};
\node [fo, label=right:\hskip0mm$y$ ] (3) at (2,-1.25) {};

\graph { (3) -- (1) -- (2) -- (3)};

\node [fo ] (1) at (7,-1.25) {};
\node [fo, label=right:\hskip0mm$y$ ] (2) at (9.5,-1.25) {};
\node [fo, label=left:\hskip0mm$a$ ] (3) at (7,1.25) {};
\node [fo, label=right:\hskip0mm$x$ ] (4) at (9.5,1.25) {};

\graph { (1) -- (2), (3) -- (4) -- (1) -- (3), (2) -- (4)};

\node [fo ] (1) at (16,-1.25) {};
\node [fo, label=right:\hskip0mm$y$ ] (2) at (18.5,-1.25) {};
\node [fo] (3) at (16,1.25) {};
\node [fo, label=right:\hskip0mm$x$ ] (4) at (18.5,1.25) {};
\node [fo, label=left:\hskip0mm$a$ ] (5) at (14.75,0) {};

\graph { (1) -- (2), (3) -- (4), (1) -- (3), (5) -- (1) -- (5) -- (3), (2) -- (4)};

}

\end{center}
\vspace{-5mm}
\caption{Examples of weak fragments. \label{weak}}
\end{figure}
\noindent However, weak fragments are not enough to build tfc1 graphs using Theorem~\ref{G5} as we need at least one medium fragment. This is somewhat harder to describe. Examples based on Petersen's graph are shown in Fig.~\ref{medium} (we leave the verification that these are indeed medium fragments to the reader).
\begin{figure}
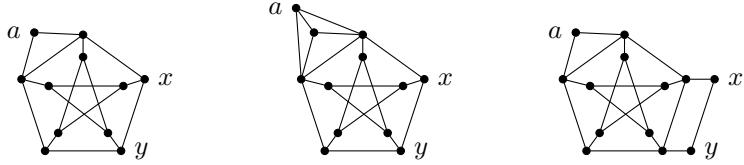

\begin{center}
\tikz[ fo/.style={draw, circle, fill=black, minimum size={0.15cm}, inner sep=0cm, scale=0.65},scale=0.5]{

\node [fo] (1) at (0,0) {};
\node [fo, label=right:\hskip0mm$y$ ] (2) at (2,0) {};
\node [fo] (3) at (-0.62,1.9) {};
\node [fo, label=right:\hskip0mm$x$ ] (4) at (2.62,1.9) {};
\node [fo] (5) at (1,3.08) {};

\node [fo] (6) at (0.345,0.475) {};
\node [fo] (7) at (2.62/4+2/2,1.9/4) {};
\node [fo] (8) at (1/4-0.62/4,3.08/4+1.9/2) {};
\node [fo] (9) at (1/4+2/4+2.62/2,3.08/4+1.9/2) {};
\node [fo] (10) at (1,1.9+1.18/2) {};

\node [fo, label=left:\hskip0mm$a$ ] (11) at (-0.62/2+1/2 - 1.18/2.5 , 1.9/2+3.08/2 + 1.62/2.5) {};

\graph { (1) -- (2) -- (4) -- (5) -- (3) -- (1) -- (6) -- (9) -- (8) -- (7) -- (2), (6) -- (10) -- (7), (3) -- (8), (5) -- (10), (4) -- (9), (3) -- (11) -- (5)};

}
\hskip9mm \tikz[ fo/.style={draw, circle, fill=black, minimum size={0.15cm}, inner sep=0cm, scale=0.65},scale=0.5]{

\node [fo] (1) at (0,0) {};
\node [fo, label=right:\hskip0mm$y$ ] (2) at (2,0) {};
\node [fo] (3) at (-0.62,1.9) {};
\node [fo, label=right:\hskip0mm$x$ ] (4) at (2.62,1.9) {};
\node [fo] (5) at (1,3.08) {};

\node [fo] (6) at (0.345,0.475) {};
\node [fo] (7) at (2.62/4+2/2,1.9/4) {};
\node [fo] (8) at (1/4-0.62/4,3.08/4+1.9/2) {};
\node [fo] (9) at (1/4+2/4+2.62/2,3.08/4+1.9/2) {};
\node [fo] (10) at (1,1.9+1.18/2) {};

\node [fo ] (11) at (-0.62/2+1/2 - 1.18/2.5 , 1.9/2+3.08/2 + 1.62/2.5) {};
\node [fo, label=left:\hskip0mm$a$ ] (12) at (-0.62/2+1/2 - 1.18/1.25 , 1.9/2+3.08/2 + 1.62/1.25) {};

\graph { (1) -- (2) -- (4) -- (5) -- (3) -- (1) -- (6) -- (9) -- (8) -- (7) -- (2), (6) -- (10) -- (7), (3) -- (8), (5) -- (10), (4) -- (9), (3) -- (11) -- (5), (3) -- (12) -- (5), (11) -- (12)};

}
\hskip9mm \tikz[ fo/.style={draw, circle, fill=black, minimum size={0.15cm}, inner sep=0cm, scale=0.65},scale=0.5]{

\node [fo] (1) at (0,0) {};
\node [fo, label=right:\hskip0mm$$ ] (2) at (2,0) {};
\node [fo] (3) at (-0.62,1.9) {};
\node [fo, label=right:\hskip0mm$$ ] (4) at (2.62,1.9) {};
\node [fo] (5) at (1,3.08) {};

\node [fo] (6) at (0.345,0.475) {};
\node [fo] (7) at (2.62/4+2/2,1.9/4) {};
\node [fo] (8) at (1/4-0.62/4,3.08/4+1.9/2) {};
\node [fo] (9) at (1/4+2/4+2.62/2,3.08/4+1.9/2) {};
\node [fo] (10) at (1,1.9+1.18/2) {};

\node [fo, label=left:\hskip0mm$a$ ] (11) at (-0.62/2+1/2 - 1.18/2.5 , 1.9/2+3.08/2 + 1.62/2.5) {};

\node [fo, label=right:\hskip0mm$y$ ] (12) at (2.75,0) {};
\node [fo, label=right:\hskip0mm$x$ ] (13) at (3.37,1.9) {};

\graph { (1) -- (2) -- (4) -- (5) -- (3) -- (1) -- (6) -- (9) -- (8) -- (7) -- (2), (6) -- (10) -- (7), (3) -- (8), (5) -- (10), (4) -- (9), (3) -- (11) -- (5), (2) -- (12) -- (13) -- (4)};

}
\end{center}
\vspace{-5mm}
\caption{Examples of medium fragments.\label{medium}}
\end{figure}
Using two (not necessarily different) graphs of Fig.~\ref{medium} and Theorem~\ref{G5}, we obtain tfc1 graphs, see Fig.~\ref{2ls1fc1}. 
\begin{figure}
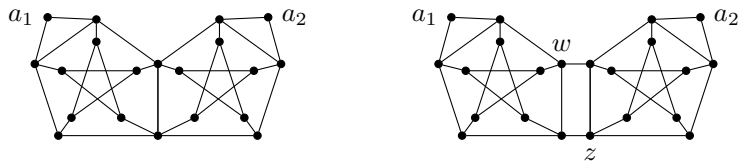

\begin{center}
\tikz[ fo/.style={draw, circle, fill=black, minimum size={0.15cm}, inner sep=0cm, scale=0.65},scale=0.5]{

\node [fo] (1) at (0,0) {};
\node [fo, label=below:\color{white}\hskip0mm$z$ ] (2) at (2.62,0) {};
\node [fo] (3) at (-0.62,1.9) {};
\node [fo, label=right:\hskip0mm ] (4) at (2.62,1.9) {};
\node [fo] (5) at (1,3.08) {};

\node [fo] (6) at (0.345,0.475) {};
\node [fo] (7) at (2.62/4+2/2,1.9/4) {};
\node [fo] (8) at (1/4-0.62/4,3.08/4+1.9/2) {};
\node [fo] (9) at (1/4+2/4+2.62/2,3.08/4+1.9/2) {};
\node [fo] (10) at (1,1.9+1.18/2) {};

\node [fo, label=left:\hskip0mm$a_1$ ] (11) at (-0.62/2+1/2 - 1.18/2.5 , 1.9/2+3.08/2 + 1.62/2.5) {};

\node [fo] (21) at (2*2.62,0) {};
\node [fo] (23) at (5.86,1.9) {};
\node [fo] (25) at (4.24,3.08) {};

\node [fo] (26) at (2.62-0.345+2.62,0.475) {};
\node [fo] (27) at (2*2.62-2.62/4-2/2,1.9/4) {};
\node [fo] (28) at (2*2.62-1/4+0.62/4,3.08/4+1.9/2) {};
\node [fo] (29) at (2*2.62-1/4-2/4-2.62/2,3.08/4+1.9/2) {};
\node [fo] (30) at (2*2.62-1,1.9+1.18/2) {};

\node [fo, label=right:\hskip0mm$a_2$ ] (31) at (2*2.62+0.62/2-1/2 + 1.18/2.5 , 1.9/2+3.08/2 + 1.62/2.5) {};

\graph { (1) -- (2) -- (4) -- (5) -- (3) -- (1) -- (6) -- (9) -- (8) -- (7) -- (2), (6) -- (10) -- (7), (3) -- (8), (5) -- (10), (4) -- (9), (3) -- (11) -- (5)};

\graph { (21) -- (2) -- (4) -- (25) -- (23) -- (21) -- (26) -- (29) -- (28) -- (27) -- (2), (26) -- (30) -- (27), (23) -- (28), (25) -- (30), (4) -- (29), (23) -- (31) -- (25)};

}
\hskip9mm \pgfmathsetmacro{\elt}{0.75} \tikz[ fo/.style={draw, circle, fill=black, minimum size={0.15cm}, inner sep=0cm, scale=0.65},scale=0.5]{

\node [fo] (1) at (0,0) {};
\node [fo, label=right:\hskip0mm ] (2) at (2.62,0) {};
\node [fo] (3) at (-0.62,1.9) {};
\node [fo, label=above:\hskip0mm$w$ ] (4) at (2.62,1.9) {};
\node [fo] (5) at (1,3.08) {};

\node [fo] (6) at (0.345,0.475) {};
\node [fo] (7) at (2.62/4+2/2,1.9/4) {};
\node [fo] (8) at (1/4-0.62/4,3.08/4+1.9/2) {};
\node [fo] (9) at (1/4+2/4+2.62/2,3.08/4+1.9/2) {};
\node [fo] (10) at (1,1.9+1.18/2) {};

\node [fo, label=left:\hskip0mm$a_1$ ] (11) at (-0.62/2+1/2 - 1.18/2.5 , 1.9/2+3.08/2 + 1.62/2.5) {};

\node [fo] (21) at (2*2.62+\elt,0) {};
\node [fo, label=below:\hskip0mm$z$ ] (22) at (2.62+\elt,0) {};
\node [fo] (23) at (5.86+\elt,1.9) {};
\node [fo, label=right:\hskip0mm ] (24) at (2.62+\elt,1.9) {};
\node [fo] (25) at (4.24+\elt,3.08) {};

\node [fo] (26) at (2.62-0.345+2.62+\elt,0.475) {};
\node [fo] (27) at (2*2.62-2.62/4-2/2+\elt,1.9/4) {};
\node [fo] (28) at (2*2.62-1/4+0.62/4+\elt,3.08/4+1.9/2) {};
\node [fo] (29) at (2*2.62-1/4-2/4-2.62/2+\elt,3.08/4+1.9/2) {};
\node [fo] (30) at (2*2.62-1+\elt,1.9+1.18/2) {};

\node [fo, label=right:\hskip0mm$a_2$ ] (31) at (2*2.62+0.62/2-1/2 + 1.18/2.5+\elt , 1.9/2+3.08/2 + 1.62/2.5) {};

\graph { (1) -- (2) -- (4) -- (5) -- (3) -- (1) -- (6) -- (9) -- (8) -- (7) -- (2), (6) -- (10) -- (7), (3) -- (8), (5) -- (10), (4) -- (9), (3) -- (11) -- (5)};

\graph { (21) -- (22) -- (24) -- (25) -- (23) -- (21) -- (26) -- (29) -- (28) -- (27) -- (22), (26) -- (30) -- (27), (23) -- (28), (25) -- (30), (24) -- (29), (23) -- (31) -- (25), (2) -- (22) -- (24) -- (4) };

}
\end{center}
\vspace{-5mm}
\caption{Examples of tfc1 graphs. \label{2ls1fc1}}
\end{figure}

\noindent Next we would like to find strong fragments, which is obviously even harder than finding medium ones. First we characterise 2-lsfc1 graphs with a separator $\{x,y\}$, such that $x$ and $y$ are neighbours. The next theorem shows that these can only be obtained in the way described in Theorem~\ref{G5}. 

\begin{thm} \label{G6} Let $G,x,y,a_1,a_2,G_1,G_2$ be as described in Claim~\ref{elsoclaim} and Claim~\ref{masodikclaim} and let $xy \in E(G)$. Then  
$(G_1,a_1,x,y)$ and $(G_2,a_2,x,y)$ are weak fragments and either both of them are also medium or one of them is also strong.
\end{thm}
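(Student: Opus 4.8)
The plan is to prove the converse of Theorem~\ref{G5}: given a $2$-lsfc1 graph $G$ with neighbouring separator vertices $x,y$, the two $2$-fragments $G_1,G_2$ (with $a_i \in V(G_i)$ as in Claim~\ref{masodikclaim}) are weak fragments, with the additional medium/strong alternative holding. The proof splits naturally into three tasks: (a) show each $(G_i,a_i,x,y)$ satisfies ($P_0$); (b) show at least one satisfies ($Q_1$); and (c) show that if only one satisfies ($Q_1$), then that one in fact satisfies ($Q_2$), i.e.\ is strong.

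For task (a), I would invoke Claim~\ref{harmadikclaim} directly. Since $xy \in E(G)$, that claim already gives a hamiltonian $a_ix$- or $a_iy$-path in $G_i$ and in $G_i - v$ for every $v \in V(G_i)\setminus\{a_i\}$, which is precisely property ($P_0$) for the quadruple $(G_i,a_i,x,y)$. So both fragments are weak with essentially no extra work. For task (b), suppose for contradiction that both $G_1$ and $G_2$ have a hamiltonian $xy$-path. Gluing a hamiltonian $xy$-path of $G_1$ to a hamiltonian $xy$-path of $G_2$ at $\{x,y\}$ produces a hamiltonian cycle of $G$ (the two internally-disjoint $xy$-paths share exactly the endpoints), contradicting the fact that $G$ is non-hamiltonian (it is $2$-leaf-stable, hence traceable but not hamiltonian). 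Hence at least one $G_i$, say $G_1$, is a medium fragment.

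For task (c), which I expect to be the main obstacle, assume $G_2$ is \emph{not} medium, i.e.\ $G_2$ has a hamiltonian $xy$-path; I must show $G_1$ is strong, that is, $G_1$ also satisfies ($Q_2$): for every $v \in V(G_1)\setminus\{a_1\}$ there is no hamiltonian $xy$-path in $G_1 - v$. Suppose some $v \in V(G_1)\setminus\{a_1\}$ did admit a hamiltonian $xy$-path $R_1$ in $G_1 - v$. Combining $R_1$ with a hamiltonian $xy$-path of $G_2$ yields a hamiltonian cycle of $G - v$. But since $v \ne a_1, a_2$ and $v \notin \{x,y\}$ (as $v \in V(G_1)\setminus\{a_1\}$ and by Claim~\ref{masodikclaim}(i) $\{a_1,a_2\}\cap\{x,y\}=\emptyset$; the case $v\in\{x,y\}$ would need a separate trivial check or is excluded since then $G_1-v$ is a proper subgraph not spanning the relevant set), the existence of a hamiltonian cycle in $G - v$ contradicts Claim~\ref{elsoclaim}: in a $2$-lsfc1 graph, deleting any vertex other than $a_1,a_2$ cannot yield a hamiltonian graph, because then every ml-subgraph of $G-v$ would be a hamiltonian cycle, forcing $a_1$ and $a_2$ to change degree relative to the optimal hamiltonian $a_1a_2$-path $P$ and giving $\tau(P, S_v) \ge 2$, contradicting $\varphi(G)=1$. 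This establishes ($Q_2$) for $G_1$, so $G_1$ is strong, completing the proof.

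One subtlety I would flag and handle carefully: in task (c) I should also double-check the case $v \in \{x,y\}$ is genuinely irrelevant, since ($Q_2$) only quantifies over $v \in V(H)-a$ where $H = G_1$, and $x,y \in V(G_1)$, so I do need to confirm $G_1 - x$ and $G_1 - y$ have no hamiltonian $xy$-path — but this is vacuous, as $G_1 - x$ contains no vertex $x$ and hence trivially no $xy$-path. Thus ($Q_2$) for the endpoints $x,y$ holds automatically, and the real content is for internal vertices $v$, handled above. The only genuine work is assembling the gluing arguments and correctly citing that a $2$-leaf-stable graph is non-hamiltonian and that Claim~\ref{elsoclaim}/the preceding discussion forbids hamiltonicity of $G-v$ for $v \notin \{a_1,a_2\}$.
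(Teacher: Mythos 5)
Your proposal is correct and follows essentially the same route as the paper: both fragments are weak by Claim~\ref{harmadikclaim}, and the medium/strong alternative is obtained by gluing hamiltonian $xy$-paths of the two fragments (resp.\ of $G_1-v$ and $G_2$) into a hamiltonian cycle of $G$ (resp.\ of $G-v$), which is impossible. The only cosmetic difference is in the final contradiction, where you argue via the fault cost ($a_1,a_2$ changing degree, $\tau\ge 2$), while the paper more directly notes that a hamiltonian $G$ or $G-v$ already contradicts the $2$-leaf-stability of $G$.
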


\begin{proof}
By Claim~\ref{harmadikclaim},  both $(G_1,a_1,x,y)$ and $(G_2,a_2,x,y)$ are weak fragments. Let us assume now that one of the fragments, say $(G_2,a_2,x,y)$ is not medium, that is there exists a hamiltonian $xy$-path $P^2$ of $G_2$. In order to finish the proof we just have to show that $(G_1,a_1,x,y)$ is a strong fragment, that is there is no hamiltonian path between $x$ and $y$  nor in $G_1$, neither in $G_1-v$ for any $v \in V(G_1)-a_1$. Actually these are easy to see. The union of $P^2$ and a hamiltonian $xy$-path of $G_1$ would be a hamiltonian cycle of $G$, while the union of $P^2$ and a hamiltonian $xy$-path of $G_1-v$ would be a hamiltonian cycle of $G-v$, both contradicting the 2-leaf-stability of $G$.
\end{proof}

\noindent Let us consider now (say) the first tfc1 graph of Fig.~\ref{2ls1fc1} and its 2-separator $X$ consisting of the neighbours of $a_1$. By Theorem~\ref{G6} (which can be used, since the vertices in $X$ are neighbours and $G - a_1$ and $G- a_2$ are easily seen to be non-hamiltonian), the 2-fragments with attachment $X$ are weak fragments, and it is obvious that $K_3$ (one of the fragments) is not a medium fragment, therefore the other one (which is just the first graph of the figure with $a_1$ deleted) must be a strong fragment.  



\begin{cor}
    There exist infinitely many graphs with fault cost $1$.
\end{cor}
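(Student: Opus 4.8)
The plan is to produce infinitely many tfc1 graphs by applying Theorem~\ref{G5} repeatedly, keeping one fragment fixed and letting the other one grow without bound. Recall that the discussion following Theorem~\ref{G6} already exhibits a \emph{strong} fragment: the first tfc1 graph of Fig.~\ref{2ls1fc1} with the vertex $a_1$ removed is one, its attachment being the two former neighbours of $a_1$ (the complementary fragment is a $K_3$, which is not medium, and this forces the remaining fragment to be strong). I would fix one such strong fragment and call it $(H_0,a_0,x_0,y_0)$; by the definition of a strong fragment, $x_0y_0\in E(H_0)$.

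Next I would verify that, for every integer $n\ge 3$, the complete graph $K_n$ is a \emph{weak} fragment. Pick any two vertices $x,y$ of $K_n$ (they are automatically adjacent) and any third vertex $a$. A hamiltonian $ax$-path exists in $K_n$, because in a complete graph on at least two vertices there is a hamiltonian path between any two prescribed vertices; applying the same observation to $K_n-v$, which is again complete, gives a hamiltonian $ax$-path there whenever $v\neq x$ (and $v\neq a$), and a hamiltonian $ay$-path when $v=x$. Hence $(P_0)$ holds and $(K_n,a,x,y)$ is a weak fragment. Observe that $K_n$ is never medium, since it \emph{does} contain a hamiltonian $xy$-path; this is precisely why it must be paired with the strong fragment $H_0$ rather than with a medium fragment.

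For each $n\ge 3$ I would then set $G_n := K_n : H_0$, gluing along the identification of the chosen pair $\{x,y\}$ of $K_n$ with $\{x_0,y_0\}$. Since $(K_n,a,x,y)$ is weak and $(H_0,a_0,x_0,y_0)$ is strong, Theorem~\ref{G5} guarantees that each $G_n$ is a tfc1 graph, hence a graph of fault cost exactly $1$; moreover $G_n$ is automatically $2$-connected, since the conclusion of Theorem~\ref{G5} makes it $2$-leaf-stable. Because $|V(G_n)| = n + |V(H_0)| - 2$ assumes infinitely many distinct values, the graphs $G_n$ are pairwise non-isomorphic, so infinitely many graphs of fault cost $1$ exist.

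There is essentially no obstacle left to overcome: Theorem~\ref{G5} together with the existence of a single strong fragment does all the heavy lifting, and the only genuinely new point is the (routine) check that the complete graphs $K_n$, $n\ge 3$, form an unbounded supply of weak fragments relative to an adjacent pair $\{x,y\}$. If one preferred not to use $K_n$, one could substitute any other family of weak fragments of unbounded order — for instance by repeatedly invoking the fact that adding an edge to a non-complete weak fragment again yields a weak fragment — but the family $\{K_n\}_{n\ge 3}$ makes the argument shortest.
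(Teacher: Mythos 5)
Your proposal is correct and follows essentially the same route as the paper: the paper likewise glues complete graphs of unbounded order to the strong fragment obtained after Theorem~\ref{G6} and invokes Theorem~\ref{G5} to get infinitely many tfc1 graphs. Your added care in checking that $K_n$ is weak (and not medium), so that the ``one fragment strong'' clause of Theorem~\ref{G5} is the one being used, matches the intended argument; the only quibble is that $2$-connectivity follows from the glued graph being $2$-leaf-guaranteed rather than necessarily $2$-leaf-stable, which does not affect the conclusion.
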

\begin{proof}
We have seen that all complete graphs are medium fragments. Gluing these together with a strong fragment (whose existence we have just seen) we obtain infinitely many tfc1 graphs.   
\end{proof}

\noindent If they exist, graphs of order $13$ or $14$ with fault cost $1$ must, by Table~\ref{tab:counts_2-conn}, contain a triangle. Actually, we did find such graphs (of order 14, but none of order 13), as reported in the extended abstract~\cite{GRWZ23} containing some of our initial results. In Fig.~\ref{fig:tfc1_14} we reproduce one of these graphs. Using a computer it is easy to check the required properties, but---as it is stated, but not proved in~\cite{GRWZ23}---there also exists a technique generalising  Theorems~\ref{G5} and~\ref{G6}, from which these immediately follow. These more general theorems are not included here, but might be subject of a follow up paper focusing mainly on the fault cost 1 case.    

\begin{minipage}{0.9\linewidth}
    \vspace{\intextsep}
    \captionsetup{type=figure}  
    \begin{center}
        \includegraphics[height=25mm]{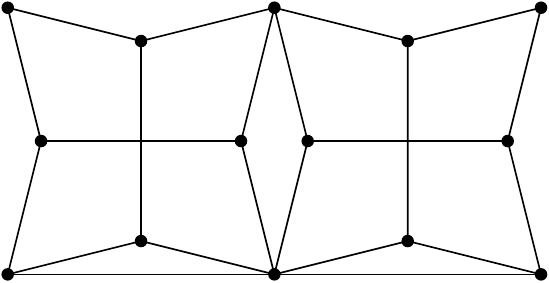}\\
        \caption{A 14-vertex graph with fault cost 1.
        }\label{fig:tfc1_14} 
    \end{center}
    \vspace{\intextsep}
\end{minipage}

In this section we have assumed that the vertices of attachment are neighbours in a tfc1 2-fragment in order to make the construction of tfc1 graphs easier. However, there might be tfc1 2-fragments without this property, thus the following questions arise naturally. Are there  tfc1 graphs with a 2-separator consisting of non-neighbouring vertices? If so, do we have infinitely many? Are there tfc1 graphs, such that \emph{all} 2-separators consist of non-neighbouring vertices? Again: if so, are there infinitely many?

The first question can be immediately answered in the affirmative: the separator $\{ w,z \}$ of the second graph of Fig.~\ref{2ls1fc1} possesses this property. This graph is obtained by gluing together the first and third medium fragments of Fig.~\ref{medium}. Notice that we can create infinitely many medium fragments by (say) substituting $a_1$ and its two neighbours (that form a $K_3$) of the (say) first graph of Fig.~\ref{2ls1fc1} with a different complete graph. This process works because of Theorems~\ref{G5} and~\ref{G6}. By gluing together these medium fragments with the third medium fragment of Fig.~\ref{medium} we obtain infinitely many tfc1 graphs with a 2-separator of non-neighbouring vertices, answering the second question positively as well. Actually, we can even create tfc1 graphs with any given number of such 2-separators using the following straightforward lemma.

\begin{lem}
Let $(H,a,x,y)$  be a weak fragment and let $H'$ be the graph obtained from $H$ by adding the vertices $x'$ and $y'$ and the edges $xx',x'y',y'y$. Then $(H',a,x',y')$  is also a weak fragment, moreover if $H$ is medium/strong then $H'$ is also medium/strong. 
\end{lem}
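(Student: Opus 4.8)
The plan is to check the defining conditions $(P_0)$, $(Q_1)$, $(Q_2)$ for the quadruple $(H',a,x',y')$ directly, exploiting that in $H'$ both $x'$ and $y'$ have degree $2$, with $N_{H'}(x')=\{x,y'\}$ and $N_{H'}(y')=\{x',y\}$, so the behaviour of any Hamiltonian path at $x'$ and $y'$ is essentially forced. The central observation I would record first is a correspondence between Hamiltonian paths (of $H'$ and its vertex-deleted subgraphs) having an endpoint in $\{x',y'\}$ and Hamiltonian paths (of $H$ and its vertex-deleted subgraphs) having an endpoint in $\{x,y\}$: if $K$ is obtained from $H$, or from $H-v$ with $v\in V(H)\setminus\{a,x,y\}$, by attaching the path $x-x'-y'-y$ (the edge $xy$ still present, since $xy\in E(H)$), then a Hamiltonian $ax'$-path of $K$ is forced to terminate $\cdots\,y-y'-x'$, hence restricts, after deleting $x'$ and $y'$, to a Hamiltonian $ay$-path of the underlying graph, and conversely every such path extends; symmetrically, Hamiltonian $ay'$-paths of $K$ correspond to Hamiltonian $ax$-paths of the underlying graph. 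When $v\in\{x,y,x',y'\}$ one of $x',y'$ becomes pendant (e.g.\ in $H'-x$ the vertex $x'$ is pendant at $y'$; in $H'-x'$ the vertex $y'$ is pendant at $y$), and the analogous, simpler correspondence still holds.

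The single point requiring a genuine argument — and the main, if minor, obstacle — is that $(P_0)$ for $H$ only guarantees a Hamiltonian $ax$- \emph{or} $ay$-path, whereas the deletions $v\in\{x',y'\}$, and $H'$ itself, require $H$ to possess \emph{both}. Here I would use $xy\in E(H)$: applying $(P_0)$ to $H$ with $v=x$ yields a Hamiltonian $ay$-path of $H-x$ (the ``$ax$'' alternative being vacuous, as $x\notin V(H-x)$), and appending the edge $xy$ turns it into a Hamiltonian $ax$-path of $H$; symmetrically, $v=y$ yields a Hamiltonian $ay$-path of $H$. With both paths available, establishing $(P_0)$ for $(H',a,x',y')$ reduces to a routine case split over $v\in V(H')\setminus\{a\}$: for $v$ in the interior of $H$, combine the correspondence above with $(P_0)$ for $H$ at the same $v$; for $v\in\{x,y\}$, use $(P_0)$ for $H$ at $v$, which again forces the ``useful'' endpoint; and for $v\in\{x',y'\}$, use the two Hamiltonian paths of $H$ just produced. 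This shows that $(H',a,x',y')$ is a weak fragment.

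For the ``moreover'' clause I would argue as follows. A Hamiltonian $x'y'$-path of $H'$ cannot use the edge $x'y'$, for then, its two endpoints being adjacent along it, it would span only $\{x',y'\}$, contradicting $|V(H')|\ge5$; hence it runs $x'-x-\cdots-y-y'$ and restricts to a Hamiltonian $xy$-path of $H$. Thus $(Q_1)$ for $H$ yields $(Q_1)$ for $H'$. The same reasoning applied to $H'-v$ yields $(Q_2)$: for $v$ in the interior of $H$, a Hamiltonian $x'y'$-path of $H'-v$ would restrict to a Hamiltonian $xy$-path of $H-v$, contradicting $(Q_2)$ for $H$; for $v\in\{x,y\}$, one of $x',y'$ becomes pendant with its unique neighbour being the other of $x',y'$, so the only $x'y'$-path would be the single edge $x'y'$, too short since $|V(H'-v)|\ge4$; and for $v\in\{x',y'\}$ no $x'y'$-path exists because one of its endpoints has been deleted. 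Hence $H$ medium (resp.\ strong) implies $H'$ medium (resp.\ strong), completing the proof.
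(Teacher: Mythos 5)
Your proof is correct. The paper states this lemma without proof (it is dismissed as ``straightforward''), so there is nothing to compare against; your direct verification of $(P_0)$, $(Q_1)$, $(Q_2)$ for $(H',a,x',y')$, exploiting that $x'$ and $y'$ have degree $2$ so that hamiltonian paths are forced through $x\,x'\,y'\,y$, is exactly the intended argument. In particular, you correctly isolate and settle the only genuinely non-obvious point: that $H$ must contain \emph{both} a hamiltonian $ax$-path and a hamiltonian $ay$-path (needed for the deletions $v\in\{x',y'\}$), which you obtain by applying $(P_0)$ at $v=x$ and $v=y$ and appending the edge $xy\in E(H)$.
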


\medskip

The third question can be answered affirmatively as well, since the graph of Fig.~\ref {fig:tfc1_14} possesses the desired property. In order to obtain infinitely many such graphs however, we need more, like the aforementioned generalisations of Theorems~\ref{G5} and~\ref{G6}.  
\section{Open problems}\label{sec:probs}

We end this paper with two natural problems on the structurally particularly challenging graphs with fault cost 1.

\medskip

\noindent \textbf{Problem 1.} Up to now we have been discussing  constructions based on 2-fragments, thus all of our tfc1 graphs are of connectivity 2. It is natural to ask whether 3-connected tfc1 graphs exist. If they do, is there a characterization for (say) the connectivity 3 case? To move even a bit further we might also ask whether $k$-leaf-guaranteed graphs with fault cost 1 exist for $k\geq 3$. 

\medskip

\noindent \textbf{Problem 2.} Are there cubic graphs with fault cost 1?

\bigskip

\section*{Acknowledgements}
Several of the computations for this work were carried out using the supercomputer infrastructure provided by the VSC (Flemish Supercomputer Center), funded by the Research Foundation Flanders (FWO) and the Flemish Government.

\bigskip

\bigskip

\appendix

\section{Appendix}

\subsection{Figure in the proof of Theorem~\ref{thm:ind-subgr}}\label{app:figure_proof_thm2}

\begin{figure}[H]
    \centering
    \newcommand{\s}{0.33} 
    \tikzstyle{fo} = [draw, circle, fill=black, minimum size={0.15cm}, inner sep=0cm, scale=0.65]
    \newcommand{\graphXiEight}[1]{
        \begin{tikzpicture}[scale=\s]
            \node[fo, label=below:$v$] (v) at (0,0) {};
            \node[fo] (1) at (0,4) {};
            \node[fo] (2) at (2,2) {};
            \node[fo, label=below:$w$] (w) at (8,0) {};
            \node[fo] (3) at (8,4) {};
            \node[fo] (4) at (6,2) {};
            \node[fo] (5) at (4,4) {};
            \node[fo] (6) at (4,2) {};
            \node[] (v') at (-3,-1) {};
            \node[] (w') at (11,-1) {};
            \draw (v) -- (w);
            \draw (v) -- (v');
            \draw (w) -- (w');
            \draw (v) -- (1) -- (2) -- (v);
            \draw (w) -- (3) -- (4) -- (w);
            \draw (1) -- (5) -- (3);
            \draw (2) -- (6) -- (4);
            #1
        \end{tikzpicture}
    }
    
    \tikzstyle{rededge} = [color=red, ultra thick]
    
    \graphXiEight{\draw[rededge]  (5) -- (3) -- (w) -- (4) -- (6) -- (2) -- (v) -- (v');}
    \graphXiEight{\draw[rededge]  (v) -- (1) -- (2) -- (6) -- (4) -- (3) -- (w) -- (w');}
    \graphXiEight{\draw[rededge]  (5) -- (1) -- (v) -- (2) -- (6) -- (4) -- (w) -- (w');}
    \graphXiEight{\draw[rededge]  (6) -- (4) -- (w) -- (3) -- (5) -- (1) -- (v) -- (v');}
    \graphXiEight{\draw[rededge]  (v) -- (2) -- (1) -- (5) -- (3) -- (4) -- (w) -- (w');}
    \graphXiEight{\draw[rededge]  (6) -- (2) -- (v) -- (1) -- (5) -- (3) -- (w) -- (w');}
    \graphXiEight{\draw[rededge]  (4) -- (6) -- (2) -- (1) -- (5) -- (3) -- (w) -- (w');}
    \graphXiEight{\draw[rededge]  (v') -- (v) -- (1) -- (5) -- (3) -- (4) -- (6) -- (2);}
    \caption{Hamiltonian paths in vertex-deleted subgraphs of $\Xi_8$.}
    \label{fig:Xi_8_K1-traceable}
\end{figure}

\subsection{Example illustrating the fault cost definition}\label{app:fc2_example}

Consider the graph $\Xi_9$ shown in Fig.~\ref{fig:xi}. It is not difficult to check that $\Xi_9$ is the smallest 2-leaf-guaranteed graph, both in terms of order and size. As a 2-leaf-guaranteed graph, $\Xi_9$ is non-hamiltonian but traceable, i.e.\ its minimum leaf number is 2, and all of its vertex-deleted subgraphs are traceable.

\begin{figure}[H]
\begin{center}
\includegraphics[height=20mm]{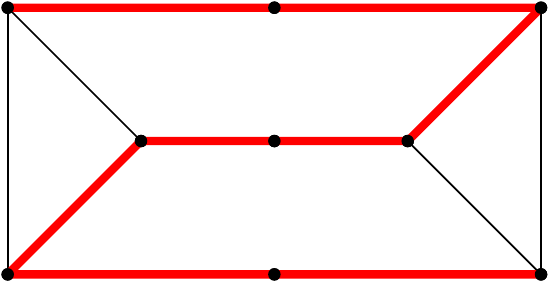}\\
\caption{The graph $\Xi_9$. Its minimum leaf number is 2.
In bold red an ml-subgraph $S$ in $\Xi_9$ is shown.}\label{fig:xi}
\end{center}
\end{figure}

Denote the trees given in the first row of Fig.~\ref{fig:xi_ml_subgraphs} with $S_1, S_2, S_3, S_4$. We have $\tau(S,S_1) = 1$, $\tau(S,S_2) = \tau(S,S_3) = \tau(S,S_4) = 3$. Therefore, if $v_1$ fails, the transition cost from $S$ to an ml-subgraph in $\Xi_9 - v_1$ is at least 1. If $v_2$ fails, the transition cost from $S$ to an ml-subgraph in $\Xi_9 - v_2$ is 4, and for every $i \in \{ 3, 4, 5 \}$, if $v_i$ fails, the transition cost from $S$ to an ml-subgraph in $\Xi_9 - v_i$ is 2. By symmetry, this covers all cases. Thus, for the ml-subgraph $S$ specified in Fig.~\ref{fig:xi}, we have $$\max_{v \in V(\Xi_9)} \min_{S_v \in {\cal S}_{{\rm ml}}(\Xi_9 - v)} \tau(S,S_v) = 4.$$

\begin{figure}[H]
\begin{center}
\includegraphics[height=72mm]{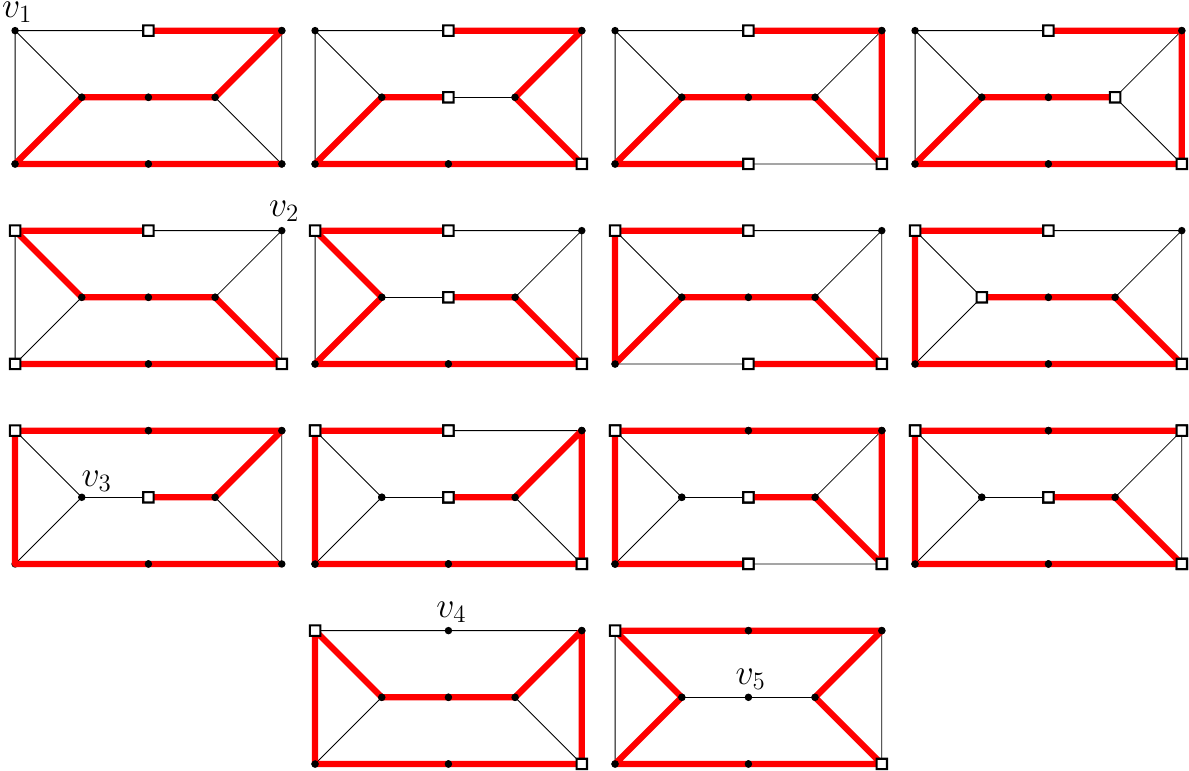}\\
\caption{Ignoring symmetric cases, the above shows all ml-subgraphs $S_v$ in $\Xi_9 - v$: twelve cases if $v \in \{ v_1, v_2, v_3 \}$ and two cases if $v \in \{ v_4, v_5 \}$. A vertex $w \ne v$ is depicted as a white square whenever $\deg_S(w) \ne \deg_{S_v}(w)$, so the number of such vertices is $\tau(S, S_v)$.}\label{fig:xi_ml_subgraphs}
\end{center}
\end{figure}

 Performing this straightforward analysis for all other ml-subgraphs of $\Xi_9$, we obtain that $\varphi(\Xi_9) = 2$. An optimal subgraph in $\Xi_9$, i.e.\ an ml-subgraph realising this minimum fault cost, is shown in Fig.~\ref{fig:xi_ml_subgraph_vertex_deleted}, left-hand side. Finally, we point out that $\Xi_9$ does contain an ml-subgraph $S$ and a vertex $v$ such that $\Xi_9 - v$ contains an ml-subgraph $S_v$ with $\tau(S,S_v) = 0$, see Fig.~\ref{fig:xi_ml_subgraph_vertex_deleted}. Graphs with vanishing fault cost are characterised in Proposition~\ref{fc0}.

\begin{figure}[H]
\begin{center}
\includegraphics[height=22mm]{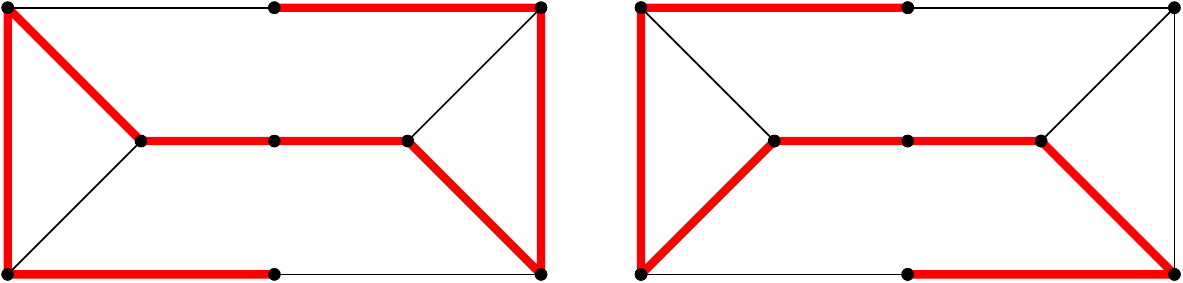}\\
\caption{An ml-subgraph of $\Xi_9$ (left-hand side) and an ml-subgraph of a vertex-deleted subgraph of $\Xi_9$ (right-hand side). The two subgraphs shown in this figure have transition cost 0.
}\label{fig:xi_ml_subgraph_vertex_deleted}
\end{center}
\end{figure}

\subsection{Correctness tests}\label{app:correctness}
While it is relatively easy to prove the correctness of Algorithm~\ref{alg:fc}, we also performed various tests to verify the correctness of the implementation. Our implementation of the algorithm is open source software and can be found on GitHub~\cite{GRWZ25},
where it can be verified and used by other researchers.

First of all, we verified that the examples of Corollary~\ref{cor:family_cubic_fc3}, indeed have fault cost $3$ up to $k = 5$. Our program determined that this was indeed the case.

Next, we verified for the family of Theorem~\ref{fault-cost_k} that the fault costs obtained by our program are the same as what is stated in the proof of the theorem. We checked this for both constructions up to order~$12$.

We also verified with our program that the two examples of Fig.~\ref{2ls1fc1} indeed have fault cost $1$.

Algorithm~\ref{alg:fc} can easily be adapted to determine the minimum leaf number of the given graphs. We compared the output of our program for the minimum leaf number to a second independent implementation of an algorithm for determining the minimum leaf number by Floor Van de Steene~\cite{Va20}. 
Both programs were in agreement for all of our checks. In particular, we checked $2$-connected graphs up to order $9$, $2$-connected graphs of girth at least $4$ up to order $11$, $2$-connected graphs of girth at least $5$ up to order $15$, $3$-connected graphs of girth at least $4$ up to order $12$, $3$-connected graphs of girth at least $5$ up to order $16$, $2$-connected cubic graphs up to order $18$, $2$-connected cubic graphs of girth at least $4$ up to order $20$, $2$-connected cubic graphs of girth at least $5$ up to order $22$ and $3$-connected planar graphs up to order $10$.

By Proposition~\ref{fc0} all $1$-hamiltonian graphs have fault cost $0$. Previously, we had already implemented an algorithm which can determine if a graph is $1$-hamiltonian (the source code of this program can be found on GitHub~\cite{GRWZ22}). We used this program to filter the $1$-hamiltonian ones in various classes of graphs and then we checked for these graphs whether our program correctly detects that their fault cost is $1$. Everything we checked was in agreement. In particular, we checked $2$-connected graphs up to order 11, $2$-connected graphs of girth at least $4$ up to order 13, $2$-connected graphs of girth at least $5$ up to order 16, 
$2$-connected cubic graphs up to order $22$,
cubic graphs of girth at least $5$ up to order $24$,
and $3$-connected planar graphs up to order $12$.

\subsection{Graphs realising the minimum of Proposition~\ref{prop:min_order_for_fc}}\label{app:hog}

In Proposition~\ref{prop:min_order_for_fc} we provided the order of the smallest graphs attaining fault cost $k$ for $k\leq 8$, with the exception of $k = 1$. For every $k$, the most symmetric graphs can be found on the House of Graphs~\cite{CDG23} by searching for the keywords ``fault cost''. We give an example of each in Fig.~\ref{fig:min_order_for_fc}.

\noindent
\begin{minipage}{\linewidth} \centering
    \vspace{\intextsep}
    \captionsetup{type=figure}
    \newcommand{\scf}{3}
    \centering
    \tikzstyle{fo}=[draw, circle, fill=black, minimum size={0.15cm}, inner sep=0cm, scale=0.65]
    \begin{minipage}{0.24\linewidth}\centering
        \begin{tikzpicture}[scale=1*\scf]
            \node[fo] (1) at (0,0) {};
            \node[fo] (2) at (0,1) {};
            \node[fo] (3) at (1,1) {};
            \node[fo] (4) at (1,0) {};
            \draw (1) -- (2) -- (3) -- (4) -- (1);
            \draw (1) -- (3);
            \draw (2) -- (4);
        \end{tikzpicture}
        \captionof*{figure}{$\varphi_0 = 4$}
    \end{minipage}
    \begin{minipage}{0.24\linewidth}\centering
        \begin{tikzpicture}[scale=1*\scf]
            \node[fo] (1) at (0,0) {};
            \node[fo] (2) at (1,0) {};
            \node[fo] (3) at (0.5,1) {};
            \draw (1) -- (2) -- (3) -- (1);
        \end{tikzpicture}
        \captionof*{figure}{$\varphi_2 = 3$}
    \end{minipage}
    \begin{minipage}{0.24\linewidth}\centering
        \begin{tikzpicture}[scale=.666*\scf]
            \node[fo] (1) at (0,0) {};
            \node[fo] (2) at (1,0) {};
            \node[fo] (3) at (0.25,.75) {};
            \node[fo] (4) at (.75,.75) {};
            \node[fo] (5) at (0.5,1.5) {};
            \node[fo] (6) at (-0.25, .75) {};
            \node[fo] (7) at (1.25, .75) {};
            \node[fo] (8) at (0.5, 0.375) {};
            \draw (1) -- (2) -- (8) -- (4) -- (5) -- (3) -- (1) -- (8);
            \draw (1) -- (6) -- (5);
            \draw (2) -- (7) -- (5);
        \end{tikzpicture}
        \captionof*{figure}{$\varphi_3 = 8$}
    \end{minipage}
    \begin{minipage}{0.24\linewidth}\centering
        \begin{tikzpicture}[scale=.666*\scf]
            \node[fo] (1) at (0,0) {};
            \node[fo] (2) at (1,0) {};
            \node[fo] (3) at (0.25,.75) {};
            \node[fo] (4) at (.75,.75) {};
            \node[fo] (5) at (0.5,1.5) {};
            \node[fo] (6) at (-0.25, .75) {};
            \node[fo] (7) at (1.25, .75) {};
            \draw (3) -- (1) -- (2) -- (4) -- (5) -- (3);
            \draw (1) -- (6) -- (5);
            \draw (2) -- (7) -- (5);
        \end{tikzpicture}
        \captionof*{figure}{$\varphi_4 = 7$}
    \end{minipage}

    \bigskip 
    
    \begin{minipage}{0.24\linewidth}\centering
        \begin{tikzpicture}[scale=.666*\scf]
            \node[fo] (1) at (-0,0) {};
            \node[fo] (2) at (1,0) {};
            \node[fo] (3) at (0.05,.75) {};
            \node[fo] (4) at (.95,.75) {};
            \node[fo] (5) at (0.5,1.5) {};
            \node[fo] (6) at (-0.25, .75) {};
            \node[fo] (7) at (1.25, .75) {};
            \node[fo] (8) at (0.35, .75) {};
            \node[fo] (9) at (.65, .75) {};
            \node[fo] (10) at (.25, .375) {};
            \node[fo] (11) at (.75, .375) {};
            \draw (3) -- (1) -- (2) -- (4) -- (5) -- (3);
            \draw (1) -- (6) -- (5);
            \draw (2) -- (7) -- (5);
            \draw (5) -- (8) -- (10) -- (11) -- (9) -- (5);
            \draw (10) -- (1) -- (11);
            \draw (10) -- (2) -- (11);
        \end{tikzpicture}
        \captionof*{figure}{$\varphi_5 = 11$}
    \end{minipage}
    \begin{minipage}{0.24\linewidth}\centering
        \begin{tikzpicture}[scale=0.333*\scf]
            \node[fo] (1) at (0,0) {};
            \node[fo] (2) at (0,3) {};
            \node[fo] (3) at (-.375,1) {};
            \node[fo] (4) at (-.375,2) {};
            \node[fo] (5) at (.375,1) {};
            \node[fo] (6) at (.375,2) {};
            \node[fo] (7) at (-1.125,1) {};
            \node[fo] (8) at (-1.125,2) {};
            \node[fo] (9) at (1.125,1) {};
            \node[fo] (10) at (1.125,2) {};
            \draw (1) -- (3) -- (4) -- (2) -- (6) -- (5) -- (1) -- (7) -- (8) -- (2) -- (10) -- (9) -- (1); 
        \end{tikzpicture}
        \captionof*{figure}{$\varphi_6 = 10$}
    \end{minipage}
    \begin{minipage}{0.24\linewidth}\centering
        \begin{tikzpicture}[scale=.666*\scf]
            \node[fo] (1) at (-0,0) {};
            \node[fo] (2) at (1,0) {};
            \node[fo] (3) at (-0.125,.75) {};
            \node[fo] (5) at (0.5,1.5) {};
            \node[fo] (6) at (-0.375, .75) {};
            \node[fo] (7) at (1.375, .75) {};
            \node[fo] (8) at (0.125, .75) {};
            \node[fo] (9) at (.375, .75) {};
            \node[fo] (11) at (.875, .75) {};
            
            \node[fo] (12) at (.25, .375) {};
            \node[fo] (13) at (.75, .375) {};
            
            \draw (3) -- (1) -- (2);
            \draw (5) -- (3);
            \draw (1) -- (6) -- (5);
            \draw (2) -- (7) -- (5);
            \draw (5) -- (8) -- (12);
            \draw (5) -- (9) -- (12);
            \draw (5) -- (11) -- (13);
            \draw (12) -- (13) -- (2);
        \end{tikzpicture}
        \captionof*{figure}{$\varphi_7 = 11$}
    \end{minipage}
    \begin{minipage}{0.24\linewidth}\centering
        \begin{tikzpicture}[scale=.666*\scf]
            \node[fo] (1) at (-0,0) {};
            \node[fo] (2) at (1,0) {};
            \node[fo] (3) at (-0.125,.75) {};
            \node[fo] (4) at (1.125,.75) {};
            \node[fo] (5) at (0.5,1.5) {};
            \node[fo] (6) at (-0.375, .75) {};
            \node[fo] (7) at (1.375, .75) {};
            \node[fo] (8) at (0.125, .75) {};
            \node[fo] (9) at (.375, .75) {};
            \node[fo] (10) at (0.625, .75) {};
            \node[fo] (11) at (.875, .75) {};
            
            \node[fo] (12) at (.25, .375) {};
            \node[fo] (13) at (.75, .375) {};
            
            \draw (3) -- (1) -- (2) -- (4) -- (5) -- (3);
            \draw (1) -- (6) -- (5);
            \draw (2) -- (7) -- (5);
            \draw (5) -- (8) -- (12);
            \draw (5) -- (9) -- (12);
            \draw (5) -- (10) -- (13);
            \draw (5) -- (11) -- (13);
            \draw (1) -- (12) -- (13) -- (2);
            
        \end{tikzpicture}
        \captionof*{figure}{$\varphi_8 = 13$}
    \end{minipage}
    \caption{A smallest graph attaining fault cost $k$ for $k\leq 8$, with the exception of $k = 1$ (we do not know the exact value of $\varphi_1$). See Proposition~\ref{prop:min_order_for_fc}. Each of these graphs is one with largest automorphism group size out of all graphs attaining these values with the exception of $k = 8$, where this is only true for the graphs of girth at least $4$.}
    \label{fig:min_order_for_fc}
    \vspace{\intextsep}
\end{minipage}

\subsection{Fault costs of \texorpdfstring{$\mathbf{3}$}{}-connected graphs}\label{app:counts_3-conn}
\begin{minipage}{\linewidth} \centering
    \vspace{\intextsep}
    \captionsetup{type=table}
    \centering
    \begin{tabular}{c|rrrrr}
         $n\backslash\varphi$ & 0 & 1 & 2 & 3 & 4 \\\hline
         4 & 1&0&0&0&0\\
         5 & 2&0&0&0&0\\
         6 & 9&0&4&0&0\\
         7 & 91&0&20&0&0\\
         8 & 1\,636&0&368&0&0\\
         9 & 58\,119&0&8\,291&0&0\\
         10 & 3\,575\,889&0&326\,455&0&0\\
         11 & 369\,791\,302&0&18\,832\,723&0&81\\
         12 & 63\,452\,885\,511&0&1\,689\,918\,189&0&1\,040\\
         \hline
         13 & 256\,052&0&612\,280&0&330\\
         14 & 11\,309\,365&0&17\,621\,062&215&1\,536\\
         \hline
         15 & 62&0&87&0&0\\
         16 & 984&0&686&0&0\\
         17 & 16\,590&0&7\,292&0&0\\
         18 & 327\,612&0&94\,582&0&0\\
         19 & 7\,213\,982&0&1\,330\,513&0&0\\
         20 & 173\,890\,208&0&21\,401\,341&0&0\\

    \end{tabular}
    \caption{Counts of how many $3$-connected graphs attain each fault cost $\varphi$ for each order $n$. The top part of the table gives counts for all $3$-connected graphs, the middle part for $3$-connected graphs of girth at least $4$, and the bottom part for $3$-connected graphs of girth at least $5$. Fault costs for which the count is zero or which are not included in the table imply that no graphs of the given orders attain this fault cost.}\label{tab:counts_3-conn}
    \vspace{\intextsep}
\end{minipage}

\subsection{Fault costs of \texorpdfstring{$\mathbf{3}$}{3}-connected cubic graphs}\label{app:counts_3-conn_cubic}

\begin{minipage}{\linewidth} \centering
    \vspace{\intextsep}
    \captionsetup{type=table}
    \centering
    \begin{tabular}{c|rrrrr}
         $n\backslash\varphi$ & 0 & 1 & 2\\\hline
         4 & 1&0&0\\
         6 & 1&0&1\\
         8 & 2&0&2\\
         10& 6&0&8\\
         12& 27&0&30\\
         14& 158&0&183\\
         16& 1\,396&0&1\,432\\
         18& 16\,067&0&14\,401\\
         20& 227\,733&0&168\,417\\
         22& 3\,740\,294&0&2\,168\,998\\
         24& 68\,237\,410&0&29\,863\,609\\
         26& 1\,346\,345\,025&0&436\,047\,621\\\hline
         28& 7\,352\,343\,711 & 0 & 1\,103\,915\,037\\\hline
         30& 14\,468\,621\,439&0&152\,588\,083\\
    \end{tabular}
    \caption{Counts of how many $3$-connected cubic graphs attain each fault cost $\varphi$ for each order $n$. The top part of the table gives counts for all $3$-connected graphs cubic , the middle part for $3$-connected cubic graphs of girth at least $4$, and the bottom part for $3$-connected cubic graphs of girth at least $5$. Fault costs for which the count is zero or which are not included in the table imply that no graphs of the given orders attain this fault cost.}\label{tab:counts_3-conn_cubic}
    \vspace{\intextsep}
\end{minipage}

\clearpage

\subsection{Fault costs of \texorpdfstring{$\mathbf{3}$}{3}-connected planar graphs}\label{app:counts_3-conn_planar}

\begin{minipage}{\linewidth} \centering
    \vspace{\intextsep}
    \captionsetup{type=table}
    \centering
    \begin{tabular}{c|rrr}
         $n \backslash \varphi$ & 0 & 1 & 2 \\\hline
         4& 1&0&0\\
         5& 2&0&0\\
         6& 7&0&0\\
         7& 34&0&0\\
         8& 246&0&11\\
         9& 2\,526&0&80\\
         10&30\,842&0&1\,458\\
         11& 416\,108&0&24\,456\\
         12& 5\,955\,716&0&428\,918\\
         13& 88\,766\,610&0&7\,496\,328\\
         14& 1\,364\,787\,597&0&131\,437\,755\\
         15& 21\,522\,536\,388&0&2\,311\,451\,741\\
         16& 346\,748\,111\,059&0&40\,843\,399\,185\\
    \end{tabular}
    \caption{Counts of fault costs for $3$-connected planar graphs. Fault costs for which the count is zero or which are not included in the table imply that no graphs of the given orders attain this fault cost. 
    }
    \label{tab:counts_3-conn_planar}
    \vspace{\intextsep}
\end{minipage}

\clearpage

\end{document}